\newtheorem{theorem}{Theorem}[section]
\newtheorem{lemma}[theorem]{Lemma}
\theoremstyle{definition}
\newtheorem{definition}[theorem]{Definition}
\newtheorem{remark}[theorem]{Remark}
\numberwithin{equation}{section}
\begin{document}
\title{Algebraic Anabelian Functors}
\author{Feng-Wen An}
\address{School of Mathematics and Statistics, Wuhan University, Wuhan,
Hubei 430072, People's Republic of China}
\email{fwan@amss.ac.cn}
\subjclass[2000]{Primary 14F35; Secondary 11G35}
\keywords{anabelian functor, anabelian geometry, \'{e}tale fundamental
group, section conjecture}

\begin{abstract}
In this paper we will prove that there exists  a covariant functor, called algebraic anabelian functor, from the category of algebraic schemes over a given field to the category of outer homomorphism sets of groups. The algebraic anabelian functor, given in a canonical manner, is full and faithful. It  reformulates the anabelian geometry over a field. As an application of the anabelian functor, we will also give a proof of the section conjecture of Grothendieck for the case of algebraic schemes.
\end{abstract}

\maketitle

\begin{center}
{\tiny {Contents} }
\end{center}

{\tiny \qquad {Introduction} }

{\tiny \qquad {1. Statement of the Main Theorem} }

{\tiny \qquad {2. Application to the Section Conjecture} }

{\tiny \qquad {3. Basic Definitions} }

{\tiny \qquad {4. Galois Extensions of Function Fields} }

{\tiny \qquad {5. Key Property of \emph{qc} Schemes} }

{\tiny \qquad {6. Universal Construction for \emph{qc} Schemes} }

{\tiny \qquad {7. Main Property of \emph{qc} Schemes} }

{\tiny \qquad {8. \emph{sp}-Completion} }

{\tiny \qquad {9. Unramified Extensions of Function Fields} }

{\tiny \qquad {10. Algebraic Fundamental Groups} }

{\tiny \qquad {11. Monodromy Actions} }

{\tiny \qquad {12. Proof of the Main Theorem}}

{\tiny \qquad {References}}

\section*{Introduction}

In this paper we will prove that there exists an \emph{algebraic anabelian functor}, a covariant functor, given in a canonical manner, from the category of algebraic schemes over a given field to the category of outer homomorphism sets of groups.

Fortunately, the algebraic anabelian functor is full and faithful. In deed, it reformulates the anabelian geometry over a field in the sense of Grothendieck. For detail, see \emph{Theorem 1.3}, the main theorem of the paper.

As an application of the algebraic anabelian functor, we will give a proof of the section conjecture of Grothendieck for the case of algebraic schemes. See \S \emph{2} for detail.

We will prove the Main Theorem of the paper in \S \emph{12} after we make several preparations in \S\S \emph{3-11}.

In particular, in \S \emph{9} we will give the proofs that the arithmetic unramified extension in \cite{An4*} and the formally unramified extension in \cite{An6} are both well-defined. These unramified extensions are used to give the computations of \'{e}tale fundamental groups for arithmetic schemes and algebraic schemes, respectively.

Note that there exists another anabelian functor, the \textbf{arithmetic anabelian functor}, which is a covariant functor defined canonically on the category of arithmetic schemes surjectively over  the ring $\mathcal{O}_{K}$ of algebraic integers of a number field $K$. Such a functor is also full and faithful.

However, the arithmetic anabelian functors, which are related to class field theory, are very different from the algebraic ones. This is due to the fact that their \'{e}tale fundamental groups are very different. For example, it is clear that $$\pi _{1}^{et}(Spec(\mathbb{Q}))\cong Gal(\overline{\mathbb{Q}}/\mathbb{Q})$$ holds (or see \emph{Theorem 10.2} for a generalized result).
On the other hand, by \cite{An4*,An8} we have $$\pi _{1}^{et}(Spec(\mathbb{Z}))\cong Gal(\mathbb{Q}^{un}/\mathbb{Q})=\{0\}.$$ Here, $\mathbb{Q}^{un}$ ($=\mathbb{Q}$) denotes the (nonabelian) maximal unramified extension of the rational field $\mathbb{Q}$.

\bigskip

\textbf{{\tiny {Acknowledgment.}}} The author would like to express his
sincere gratitude to Professor Li Banghe for his advice and instructions on
algebraic geometry and topology.

\section{Statement of the Main Theorem}

\subsection{Notation}

Let $K$ be a field.
By an \textbf{algebraic $K$-variety} we will understand an integral scheme $X
$ over  $K$ of finite type.

For an integral scheme $Z$, put

\begin{itemize}
\item $k(Z)\triangleq \mathcal{O}_{X,\xi}$, the function field of an integral scheme $Z$ with generic point $\xi$;

\item $\pi _{1}^{et}\left( Z\right) \triangleq$ the \'{e}tale fundamental
group of $Z$ for a geometric point of $Z$ over a separable closure of the
function field $k\left( Z\right).$
\end{itemize}

In particular, for a field $L$, set
$$\pi _{1}^{et}(L)\triangleq\pi _{1}^{et}(Spec(L)).$$

\subsection{Outer homomorphism set}

Let $G,H,\pi_{1},\pi_{2}$ be four groups with homomorphisms $p:G\to \pi_{1}$
and $q:H\to \pi_{2}$, respectively.

\begin{definition}
The \textbf{outer homomorphism set} from $G$ into $H$ over $\pi_{1}$ and $\pi_{2}$ respectively, denoted by $Hom_{\pi_{1},\pi_{2}}^{out}(G,H)$, is the
set of the maps $\sigma$ from the quotient $\frac{\pi_{1}}{p(G)}$ into the
quotient $\frac{\pi_{2}}{q(H)}$, given by a group homomorphism $f:G\to H$ in
such a manner:
\begin{equation*}
\sigma:\frac{\pi_{1}}{p(G)} \to \frac{\pi_{2}}{q(H)}, x\cdot p(G)\mapsto
f(x)\cdot q(H)
\end{equation*}
for any $x\in \pi_{1}$.

In particular, such a $\sigma$ is said to be \textbf{bijective} if the $f$ above is an isomorphism such that $q(H)=f\circ p (G)$.
\end{definition}

\begin{remark}
Suppose that $G$
and $H$ are normal subgroups of $\pi_{1}$ and $\pi_{2}$, respectively. Then $Hom_{\pi_{1},
\pi_{2}}^{out}(G,H)$ can be regarded as a subset of $Hom(Out(G),Out(H))$.
\end{remark}

Let $\mathbb{P}\mathbb{G}$ be the category of group pairs $(G,\pi)$ as objects,  with outer homomorphisms between group pairs as morphisms. Here, By a \textbf{group pair} $(G,\pi)$ we understand two groups $G$ and $\pi$ together with a given homomorphism $p:G\to \pi$ of groups. By an \textbf{outer homomorphism} from $(G,\pi_{1})$ into $(H,\pi_{2})$  we understand a map $$\sigma: \frac{\pi_{1}}{p(G)}\to \frac{\pi_{2}}{q(H)}$$ given in \emph{Definition 1.1}.

$\mathbb{P}\mathbb{G}$ will be called the \textbf{category of outer homomorphism sets of groups} in the paper.

\subsection{Statement of the main theorem}

Fixed a field $K$. Here $K$ is not necessarily of characteristic zero.  For any algebraic $K$-variety $X$, there canonically exists a group pair $({ \pi _{1}^{et}\left( X\right)},{\pi _{1}^{et}(k(X))})$.

Let ${\mathbb{S}\mathbbm{c}\mathbbm{h}(K)}$ denote the category of algebraic $K$-varieties as objects, with scheme morphisms as morphisms satisfying the condition:
\begin{quotation}
 \emph{For any $X,Y \in {\mathbb{S}\mathbbm{c}\mathbbm{h}(K)}$, a scheme morphism $f:X \to Y$ is said to be contained in the category ${\mathbb{S}\mathbbm{c}\mathbbm{h}(K)}$ if  $k(X)$ is  separably generated  over $k(Y)$ canonically.}
\end{quotation}

Here is the main theorem of the present paper.

\begin{theorem}
\emph{\textbf{(Main Theorem)}}
For any field $K$, there is a covariant functor  $\tau$  from category  ${\mathbb{S}\mathbbm{c}\mathbbm{h}(K)}$ to category $\mathbb{P}\mathbb{G}$ given in a canonical manner:
\begin{itemize}
\item An algebraic $K$-variety $X\in {\mathbb{S}\mathbbm{c}\mathbbm{h}(K)}$ is mapped into a group pair $({ \pi _{1}^{et}\left( X\right)},{\pi _{1}^{et}(k(X))})\in \mathbb{P}\mathbb{G}$;

\item A scheme morphism $f\in Hom(X,Y)$ is mapped into an outer homomorphism $\tau(f)\in Hom_{\pi _{1}^{et}(k(X)),\pi _{1}^{et}(k(Y))
}^{out}\left( \pi _{1}^{et}\left( X\right) ,\pi _{1}^{et}\left( Y\right)
\right)$ given  by $f$.
\end{itemize}
Furthermore, $\tau$ is full and faithful.

In particular, an $f\in Hom(X,Y)$ is an isomorphism if and only if $X$ and $Y$ have a common sp-completion and $\tau(f)$ is a bijective outer homomorphism.
\end{theorem}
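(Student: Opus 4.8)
The plan is to treat the two implications separately and, in both, to lean on the full-faithfulness of $\tau$ already asserted in the main theorem together with the properties of the sp-completion established in \S 8.

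For the forward implication, suppose $f\in Hom(X,Y)$ is a scheme isomorphism. Then $f$ carries the generic point of $X$ to that of $Y$ and hence induces an isomorphism of function fields, so the induced maps $\pi _{1}^{et}(X)\to \pi _{1}^{et}(Y)$ and $\pi _{1}^{et}(k(X))\to \pi _{1}^{et}(k(Y))$ are both group isomorphisms. By the construction of $\tau$, these two isomorphisms are compatible with the structural homomorphisms $p,q$ of the associated group pairs and carry the image of $\pi _{1}^{et}(X)$ onto that of $\pi _{1}^{et}(Y)$; in the terminology of \emph{Definition 1.1} this is exactly the statement that $\tau(f)$ is a bijective outer homomorphism. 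Functoriality and canonicity of the sp-completion of \S 8 then yield an isomorphism $X^{sp}\xrightarrow{\sim}Y^{sp}$, so that $X$ and $Y$ share a common sp-completion.

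For the converse, assume that $\tau(f)$ is bijective and that $X$ and $Y$ have a common sp-completion $W$. Unpacking \emph{Definition 1.1}, the bijectivity of $\tau(f)$ supplies a group isomorphism $\pi _{1}^{et}(X)\xrightarrow{\sim}\pi _{1}^{et}(Y)$ compatible with the isomorphism $\pi _{1}^{et}(k(X))\xrightarrow{\sim}\pi _{1}^{et}(k(Y))$ induced by $f$, that is, an isomorphism of the two group pairs in $\mathbb{P}\mathbb{G}$. Since $\tau$ is full and faithful, it reflects isomorphisms in the usual categorical way: fullness produces a morphism $g$ realizing the inverse outer homomorphism, and faithfulness promotes the identities $\tau(g\circ f)=\tau(\mathrm{id})$ and $\tau(f\circ g)=\tau(\mathrm{id})$ to $g\circ f=\mathrm{id}$ and $f\circ g=\mathrm{id}$. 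This already gives an isomorphism at the level of the common sp-completion $W$.

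The remaining, and I expect the most delicate, step is to descend this isomorphism of sp-completions to the original varieties. A priori $\tau$ distinguishes schemes only up to sp-completion, so bijectivity of $\tau(f)$ by itself controls only the birational and specialization data together with the fundamental-group pair, not the scheme structure of $X$ and $Y$ themselves. The hypothesis of a common sp-completion is precisely what fills this gap: identifying $X^{sp}=Y^{sp}=W$, one recovers $X$ and $Y$ as the same subscheme of $W$ by means of the valuation- and generic-point data reconstructed from the group pairs in \S\S 5--8, whereupon the mutually inverse morphisms on $W$ restrict to exhibit $f\colon X\to Y$ as a scheme isomorphism. The main obstacle is thus this last descent, namely recovering the full scheme structure of $X$ and $Y$ as identified subschemes of their common sp-completion from the reconstructed group-pair data, rather than merely the function-field information that the bijectivity of $\tau(f)$ directly provides.
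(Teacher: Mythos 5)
Your proposal addresses only the final clause of the theorem (the isomorphism criterion), and even there it takes as given precisely what the theorem asserts: you say at the outset that you will ``lean on the full-faithfulness of $\tau$ already asserted in the main theorem.'' But the existence of $\tau$ (well-definedness of $\tau(f)$ as an outer homomorphism, functoriality) and its full-faithfulness \emph{are} the substance of the statement; the paper's entire proof consists of establishing the chain of canonical bijections
$Hom(X,Y)\cong Hom(X_{sp},Y_{sp})\cong Hom(X_{et},Y_{et})\cong Hom\left(\pi_{1}^{br}(X),\pi_{1}^{br}(Y)\right)$
through the monodromy-action lemmas (\emph{Lemmas 11.1--11.6}) and the sp-completion lemmas (\emph{Lemmas 12.2, 12.4--12.6}), after identifying the group pair $({\pi_{1}^{et}(X)},{\pi_{1}^{et}(k(X))})$ with the ramified group $\pi_{1}^{br}(X)\cong Aut(X_{sp}/X_{et})$ via \emph{Lemma 11.5}. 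None of this machinery appears in your proposal, so the main body of the theorem is not proved but presupposed; as an argument for the whole statement it is circular.

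For the isomorphism criterion itself, your converse has a gap that you partly acknowledge. The abstract argument that fully faithful functors reflect isomorphisms (fullness produces $g$ with $\tau(g)=\tau(f)^{-1}$, faithfulness gives $g\circ f=\mathrm{id}_{X}$ and $f\circ g=\mathrm{id}_{Y}$) nowhere uses the hypothesis that $X$ and $Y$ have a common sp-completion; if it were valid as stated, that hypothesis would be redundant, contradicting the way the theorem is organized (compare \emph{Theorem 2.1}, where the common sp-completion is instead built into the category ${\mathbb{S}\mathbbm{c}\mathbbm{h}(K)}_{0}$ and only then does bijectivity of $\tau_{0}(f)$ alone suffice). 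The concrete failure point is fullness: to produce $g\in Hom(Y,X)$ realizing the inverse outer homomorphism, the hom-set $Hom(Y,X)$ must be taken inside ${\mathbb{S}\mathbbm{c}\mathbbm{h}(K)}$, i.e.\ $k(Y)$ must be canonically separably generated over $k(X)$; bijectivity of $\tau(f)$ only matches Galois-theoretic data and does not supply this, whereas a common sp-completion forces $k(X)$ and $k(Y)$ to share a separable closure. This is exactly where the paper argues differently: it uses \emph{Lemma 8.8} (all sp-completions are essentially equal, giving an isomorphism $X_{sp}\to Y_{sp}$), \emph{Lemma 9.1} (equal sets $Sp[X]=Sp[Y]$ together with isomorphic groups $Aut(X_{sp}/X)\cong Aut(Y_{sp}/Y)$ force $X\cong Y$), and the monodromy correspondence $f\leftrightarrow f_{sp}$ of \emph{Lemmas 11.3} and \emph{12.2}, so that the common sp-completion enters essentially. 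Your closing paragraph names this descent step as ``the main obstacle'' but does not carry it out, so the proposal is incomplete at exactly the point where the paper's argument does its work.
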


Here, for  \emph{sp-completion}, see \cite{An8} or see below \S \emph{8.2} in the present paper. Roughly speaking, an \emph{sp}-completion of an integral scheme is such a one that contains all the separably closed points.

We will prove \emph{Theorem 1.3} in \S \emph{12} after we make preparations in \S\S \emph{3-11}.

\begin{remark}
The  functor $\tau(K)$ is said to be the \textbf{anabelian functor} over a field $K$, or \textbf{algebraic anabelian functor}. The Main Theorem  above says that the ananbelian functor over a field reformulates the anabelian geometry in the sense of Grothendieck. In deed, it will also give an answer to the section conjecture of Grothendieck for the case of algebraic schemes (see \S 2).
\end{remark}

\begin{remark}
There exists an \textbf{arithmetic anabelian functor}, which is the anabelian functor over  the ring $\mathcal{O}_{K}$ of algebraic integers of a number field $K$. Such a functor is also full and faithful. However, the arithmetic anabelian functors, which are related to class field theory, are very different from the algebraic ones. This is due to the fact that their \'{e}tale fundamental groups are very different.
\end{remark}

\begin{remark}
Let $X$ and $Y$ be two integral $K$-varieties. It is seen that the group pairs $({ \pi _{1}^{et}\left( X\right)},{\pi _{1}^{et}(k(X))})$ and $({ \pi _{1}^{et}\left( Y\right)},{\pi _{1}^{et}(k(Y))})$ are indeed the {ramified groups} $\pi_{1}^{br}(X)$ and $\pi_{1}^{et}(Y)$, respectively. Hence, the outer homomorphism set $$Hom_{\pi _{1}^{et}(k(X)),\pi _{1}^{et}(k(Y))
}^{out}\left( \pi _{1}^{et}\left( X\right) ,\pi _{1}^{et}\left( Y\right)
\right)$$ is exactly equal to the set $$Hom(\pi_{1}^{br}(X),\pi_{1}^{br}(Y))$$ of homomorphisms between the ramified groups. For ramified groups, see \S \emph{10-11} below in the paper.
\end{remark}

\section{Application to the Section Conjecture}

\subsection{Algebraic anabelian functor: Special case}

Let ${\mathbb{S}\mathbbm{c}\mathbbm{h}(K)}_{0}$ be the category of algebraic $K$-varieties as objects, together with scheme morphisms as morphisms satisfying the condition:
\begin{quotation}
  \emph{For any $X,Y\in {\mathbb{S}\mathbbm{c}\mathbbm{h}(K)}_{0}$, a morphism $f:X \to Y$ of schemes is said to be contained in the category ${\mathbb{S}\mathbbm{c}\mathbbm{h}(K)}_{0}$ if $X$ and $Y$ have a common $sp$-completion and  $k(X)$ is separable over $k(Y)$ canonically.}
\end{quotation}

Here is a result on algebraic anabelian functor on the category ${\mathbb{S}\mathbbm{c}\mathbbm{h}(K)}_{0}$.

\begin{theorem}
For any field $K$, there exists a covariant functor  $\tau_{0}$  from category  ${\mathbb{S}\mathbbm{c}\mathbbm{h}(K)}_{0}$ to category $\mathbb{P}\mathbb{G}$ given in a canonical manner:
\begin{itemize}
\item An algebraic $K$-variety $X\in {\mathbb{S}\mathbbm{c}\mathbbm{h}(K)}_{0}$ is mapped into a group pair $({ \pi _{1}^{et}\left( X\right)},{\pi _{1}^{et}(k(X))})\in \mathbb{P}\mathbb{G}$;

\item A scheme morphism $f\in Hom(X,Y)$ is mapped into an outer homomorphism $\tau_{0}(f)\in Hom_{\pi _{1}^{et}(k(X)),\pi _{1}^{et}(k(Y))
}^{out}\left( \pi _{1}^{et}\left( X\right) ,\pi _{1}^{et}\left( Y\right)
\right)$ given  by $f$.
\end{itemize}
Furthermore, $\tau_{0}$ is  full and faithful.

In particular, an $f\in Hom(X,Y)$ is an isomorphism if and only if the outer homomorphism  $\tau_{0}(f)$ is  bijective.
\end{theorem}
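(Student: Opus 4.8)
The plan is to deduce Theorem 2.1 directly from the Main Theorem, Theorem 1.3, by realizing ${\mathbb{S}\mathbbm{c}\mathbbm{h}(K)}_{0}$ as a subcategory of ${\mathbb{S}\mathbbm{c}\mathbbm{h}(K)}$ and restricting the functor $\tau$. First I would observe that for function fields of algebraic $K$-varieties the extension $k(X)/k(Y)$ is finitely generated, so that being \emph{separable} and being \emph{separably generated} coincide; hence the field-theoretic hypothesis defining morphisms of ${\mathbb{S}\mathbbm{c}\mathbbm{h}(K)}_{0}$ is exactly the one defining morphisms of ${\mathbb{S}\mathbbm{c}\mathbbm{h}(K)}$, and the only genuine additional constraint imposed in ${\mathbb{S}\mathbbm{c}\mathbbm{h}(K)}_{0}$ is that $X$ and $Y$ share a common $sp$-completion. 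This exhibits ${\mathbb{S}\mathbbm{c}\mathbbm{h}(K)}_{0}$ as a (non-full) subcategory of ${\mathbb{S}\mathbbm{c}\mathbbm{h}(K)}$ with the same objects, and I would simply set $\tau_{0}=\tau|_{{\mathbb{S}\mathbbm{c}\mathbbm{h}(K)}_{0}}$.

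Next I would verify that this subcategory is genuinely a category, i.e. that it contains all identities and is closed under composition. Identities are harmless, since $X$ trivially has a common $sp$-completion with itself and $k(X)/k(X)$ is separable. For composability, given $f:X\to Y$ and $g:Y\to Z$ in ${\mathbb{S}\mathbbm{c}\mathbbm{h}(K)}_{0}$, transitivity of separability of field extensions gives that $k(X)$ is separable over $k(Z)$, and I would invoke the properties of the $sp$-completion established in \S 8 to show that the relation ``having a common $sp$-completion'' is transitive, so that $X$ and $Z$ again share one. Once ${\mathbb{S}\mathbbm{c}\mathbbm{h}(K)}_{0}$ is confirmed to be a subcategory, covariance of $\tau_{0}$ is inherited verbatim from that of $\tau$, and faithfulness is immediate: the map induced by $\tau_{0}$ on each Hom-set is the restriction of an injective map induced by $\tau$, hence injective.

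The only substantive point is fullness. Fix $X,Y\in{\mathbb{S}\mathbbm{c}\mathbbm{h}(K)}_{0}$ and an outer homomorphism $\sigma$ in $Hom_{\pi_{1}^{et}(k(X)),\pi_{1}^{et}(k(Y))}^{out}(\pi_{1}^{et}(X),\pi_{1}^{et}(Y))$. By fullness of $\tau$ there is a scheme morphism $g\in Hom(X,Y)$ in ${\mathbb{S}\mathbbm{c}\mathbbm{h}(K)}$ with $\tau(g)=\sigma$; since the field-theoretic condition is already the same, the task reduces to showing that $g$ in fact lies in ${\mathbb{S}\mathbbm{c}\mathbbm{h}(K)}_{0}$, that is, that $X$ and $Y$ share a common $sp$-completion. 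I expect this to be the main obstacle, because the shared $sp$-completion must be read off from the group-pair data encoded by $\sigma$: using the identification of the group pairs with the ramified groups (Remark 1.6) together with the $sp$-completion machinery of \S 8, one must show that the existence of the outer homomorphism $\sigma$ already forces the separable closures of $k(X)$ and $k(Y)$ to sit inside a common overfield, so that the two $sp$-completions can be identified and $g$ is a morphism of ${\mathbb{S}\mathbbm{c}\mathbbm{h}(K)}_{0}$. I would handle this by comparing these separable closures inside the overfield furnished by the reconstructed $sp$-completion, thereby ruling out that $\tau$ produces a realizing morphism lying strictly outside ${\mathbb{S}\mathbbm{c}\mathbbm{h}(K)}_{0}$.

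Finally, for the isomorphism criterion, the ``only if'' direction is a special case of the corresponding statement in Theorem 1.3. For the ``if'' direction, suppose $\tau_{0}(f)$ is a bijective outer homomorphism. Since $f$ is by definition a morphism of ${\mathbb{S}\mathbbm{c}\mathbbm{h}(K)}_{0}$, the objects $X$ and $Y$ already possess a common $sp$-completion; thus both hypotheses required by the isomorphism criterion of Theorem 1.3 — common $sp$-completion and bijectivity of $\tau(f)$ — are in force, and that theorem yields that $f$ is an isomorphism. In this way the extra clause ``$X$ and $Y$ have a common $sp$-completion'' visible in Theorem 1.3 is absorbed into the definition of the morphisms of ${\mathbb{S}\mathbbm{c}\mathbbm{h}(K)}_{0}$, which is precisely what makes the criterion in Theorem 2.1 cleaner.
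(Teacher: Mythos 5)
Your skeleton is the paper's own: ${\mathbb{S}\mathbbm{c}\mathbbm{h}(K)}_{0}$ has the same objects as ${\mathbb{S}\mathbbm{c}\mathbbm{h}(K)}$, you restrict $\tau$ (the paper's proof is precisely the one-line citation of \emph{Theorem 1.3} together with \emph{Lemma 12.2}), faithfulness restricts, and the isomorphism criterion follows because the common-$sp$-completion clause of \emph{Theorem 1.3} is built into the morphisms of ${\mathbb{S}\mathbbm{c}\mathbbm{h}(K)}_{0}$; your subcategory checks (separable $=$ separably generated for finitely generated extensions, \emph{Lemma 8.9} for transitivity) are also fine. The genuine gap is your fullness step: you reduce fullness to the claim that the existence of an outer homomorphism $\sigma$ already forces $X$ and $Y$ to share an $sp$-completion, and that claim is false inside the paper's own framework. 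Take $X=\mathbb{A}_{K}^{2}=Spec(K[t_{1},t_{2}])$, $Y=\mathbb{A}_{K}^{1}=Spec(K[t_{1}])$, and let $f:X\to Y$ be the coordinate projection. Since $K(t_{1},t_{2})$ is purely transcendental, hence separably generated, over $K(t_{1})$, we have $f\in Hom_{{\mathbb{S}\mathbbm{c}\mathbbm{h}(K)}}(X,Y)$, so by \emph{Theorem 1.3} the set $Hom^{out}_{\pi_{1}^{et}(k(X)),\pi_{1}^{et}(k(Y))}\left(\pi_{1}^{et}(X),\pi_{1}^{et}(Y)\right)$ contains $\tau(f)$ and is non-empty. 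Yet by \emph{Theorem 8.7} any $sp$-completion of $X$ has function field a separable closure of $K(t_{1},t_{2})$, while any $sp$-completion of $Y$ has function field a separable closure of $K(t_{1})$; no field can be algebraic over both (the transcendence degrees over $K$ differ), so no scheme is an $sp$-completion of both, $Hom_{{\mathbb{S}\mathbbm{c}\mathbbm{h}(K)}_{0}}(X,Y)=\emptyset$, and $\tau(f)$ is realized by no morphism of ${\mathbb{S}\mathbbm{c}\mathbbm{h}(K)}_{0}$. Note also that your weaker formulation --- that the two separable closures ``sit inside a common overfield'' --- is satisfied in this example and is not what is needed: a common $sp$-completion forces the separable closures to coincide.

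The conclusion is that your reduction runs in the wrong direction. Fullness of $\tau_{0}$ is available, and is only needed, on pairs $(X,Y)$ that already admit a common $sp$-completion: there $Hom_{{\mathbb{S}\mathbbm{c}\mathbbm{h}(K)}_{0}}(X,Y)=Hom_{{\mathbb{S}\mathbbm{c}\mathbbm{h}(K)}}(X,Y)$ (the field conditions agree, as you observed), so surjectivity onto the outer homomorphism set is inherited verbatim from \emph{Theorem 1.3} with no further argument; this is the reading the paper implicitly uses, and it is exactly what is consumed by \emph{Theorem 2.2}, where the common $sp$-completion appears as an explicit hypothesis. On pairs without a common $sp$-completion the source Hom-set is empty while, as the example shows, the target can be non-empty, so fullness of $\tau_{0}$ in the strict categorical sense actually fails there; that defect lies in the statement itself (and equally in the paper's one-line proof), and it is not something your proposed argument --- or any argument --- can establish.
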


\begin{proof}
It is immediate from \emph{Theorem 1.3} and \emph{Lemma 12.2}.
\end{proof}

\subsection{Section conjecture for algebraic schemes}

Fixed a field $K$. There are the following results on anabelian geometry of algebraic schemes over $K$.

\begin{theorem}
Let $X$ and $Y$ be two algebraic $K$-varieties. Suppose that $k\left( X\right) $ is canonically separable over $k\left( Y\right)$ and $X,Y$ have a common sp-completion. Then there is a bijection
\begin{equation*}
Hom\left( X,Y\right) \cong Hom_{\pi _{1}^{et}(k(X)),\pi _{1}^{et}(k(Y))
}^{out}\left( \pi _{1}^{et}\left( X\right) ,\pi _{1}^{et}\left( Y\right)
\right)
\end{equation*}
between sets.
\end{theorem}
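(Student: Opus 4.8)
The plan is to read the claimed bijection as the Hom-set reformulation of the full-faithfulness of the functor $\tau_0$ from \emph{Theorem 2.2}. The two hypotheses — that $X$ and $Y$ admit a common sp-completion and that $k(X)$ is canonically separable over $k(Y)$ — are precisely the defining conditions for a scheme morphism $f:X\to Y$ to lie in the restricted category ${\mathbb{S}\mathbbm{c}\mathbbm{h}(K)}_0$. My first step is therefore to check that under these hypotheses every scheme morphism in $Hom(X,Y)$ is already a morphism of ${\mathbb{S}\mathbbm{c}\mathbbm{h}(K)}_0$, so that the two Hom-sets coincide and nothing is lost when morphisms are fed to $\tau_0$.

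Next I would identify the target. By the construction of the category $\mathbb{P}\mathbb{G}$ together with \emph{Definition 1.1}, the morphism set between the group pairs $\tau_0(X)=(\pi_1^{et}(X),\pi_1^{et}(k(X)))$ and $\tau_0(Y)=(\pi_1^{et}(Y),\pi_1^{et}(k(Y)))$ is by definition the outer homomorphism set
$$Hom_{\pi_1^{et}(k(X)),\pi_1^{et}(k(Y))}^{out}(\pi_1^{et}(X),\pi_1^{et}(Y)).$$
Thus $\tau_0$ induces a map of sets $f\mapsto\tau_0(f)$ from $Hom(X,Y)$ into this outer homomorphism set, and the theorem is exactly the assertion that this map is bijective.

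The bijectivity is then supplied by \emph{Theorem 2.2}: faithfulness of $\tau_0$ gives injectivity of $f\mapsto\tau_0(f)$, while fullness gives surjectivity, namely that every outer homomorphism in the target arises as $\tau_0(f)$ for some scheme morphism $f$. Combining the two halves yields the claimed bijection of sets.

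The main obstacle is the first step rather than the last. One must confirm that the conjunction of the common sp-completion and the canonical separability of $k(X)$ over $k(Y)$ genuinely promotes an \emph{arbitrary} scheme morphism $X\to Y$ into the subcategory ${\mathbb{S}\mathbbm{c}\mathbbm{h}(K)}_0$, so that the source Hom-set is all of $Hom(X,Y)$ and not a proper subset. This is where the sp-completion machinery of \S 8 and the separability analysis of function fields in \S\S 4, 9 enter, ensuring both that the induced field extension is separably generated and that the resulting map on \'{e}tale fundamental groups lands inside the outer homomorphism set rather than in a larger set of abstract group homomorphisms. Once this identification is secured, the statement is a formal consequence of full-faithfulness.
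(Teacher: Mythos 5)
Your proposal is correct and is essentially the paper's own proof: the paper disposes of this statement by citing the full-faithfulness of $\tau_{0}$ on the restricted category ${\mathbb{S}\mathbbm{c}\mathbbm{h}(K)}_{0}$, and your three steps (identifying $Hom(X,Y)$ with the Hom-set of ${\mathbb{S}\mathbbm{c}\mathbbm{h}(K)}_{0}$ via the hypotheses, identifying the target with the $\mathbb{P}\mathbb{G}$-morphism set, then reading off bijectivity from fullness and faithfulness) merely spell out why that citation suffices. Note only a numbering slip: the full-faithfulness theorem you invoke is \emph{Theorem 2.1} in the paper, not \emph{Theorem 2.2}, which is the statement being proved.
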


\begin{proof}
It is immediate from \emph{Theorem 2.1}.
\end{proof}

\begin{theorem}
Let $X$ be an algebraic $K$-variety. Suppose that $k(X)$ is separably generated over $K$. Then there is a bijection
\begin{equation*}
\Gamma \left( X/K\right) \cong Hom_{\pi _{1}^{et}(K),\pi _{1}^{et}(k(X))
}^{out}\left( \pi _{1}^{et}(K) ,\pi _{1}^{et}\left( X\right) \right)
\end{equation*}
between sets.
\end{theorem}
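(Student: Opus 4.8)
The plan is to realize Theorem 2.4 as the instance of Theorem 2.3 in which the target variety is the base $Spec(K)$, and to identify the section set with an ordinary $Hom$ set. A section of the structure morphism $\rho\colon X\to Spec(K)$ is a morphism $s\colon Spec(K)\to X$ with $\rho\circ s=\mathrm{id}$; conversely, every $K$-morphism $Spec(K)\to X$ is automatically such a section, since the only $K$-algebra endomorphism of $K$ is the identity. Hence $\Gamma(X/K)=Hom(Spec(K),X)$. On the group side, $\pi_{1}^{et}(Spec(K))=\pi_{1}^{et}(K)$ and $k(Spec(K))=K$, so the source group pair is $\tau(Spec(K))=(\pi_{1}^{et}(K),\pi_{1}^{et}(K))$ and the target group pair is $\tau(X)=(\pi_{1}^{et}(X),\pi_{1}^{et}(k(X)))$; thus the outer homomorphism set appearing in the statement is exactly the morphism set from $\tau(Spec(K))$ to $\tau(X)$ in $\mathbb{P}\mathbb{G}$.

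With this dictionary in place I would invoke Theorem 2.3. The hypothesis that $k(X)$ is separably generated over $K$ supplies the separability condition that Theorem 2.3 demands of the pair $(Spec(K),X)$, and it also guarantees a common sp-completion: since $K\subseteq k(X)$, the separable closure of $k(X)$ dominates that of $K$, so the sp-completion of \S 8 built from $X$ simultaneously serves $Spec(K)$. Granting these two verifications, Theorem 2.3 yields the bijection
\[
\Gamma(X/K)=Hom(Spec(K),X)\cong Hom_{\pi_{1}^{et}(K),\,\pi_{1}^{et}(k(X))}^{out}\bigl(\pi_{1}^{et}(K),\pi_{1}^{et}(X)\bigr),
\]
which is the assertion. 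Equivalently, one may argue from the full faithfulness of $\tau$ in Theorem 1.3 together with Lemma 12.2, since full faithfulness identifies the morphism set from $\tau(Spec(K))$ to $\tau(X)$ with $Hom(Spec(K),X)$.

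The step I expect to be the main obstacle is the verification, hidden inside Theorem 2.3, that this correspondence is genuinely \emph{surjective} for morphisms of this type. Unlike a dominant morphism, a section $s\colon Spec(K)\to X$ induces no function-field extension in the naive sense, so the real content is to show that every outer homomorphism compatible with the base groups $\pi_{1}^{et}(K)$ and $\pi_{1}^{et}(k(X))$ --- that is, every group-theoretic splitting of the projection $\pi_{1}^{et}(X)\to\pi_{1}^{et}(K)$ induced by $\rho$ --- actually arises from a $K$-rational point of $X$. This surjectivity is precisely the substantive half of the section conjecture; the matching injectivity is the faithfulness of $\tau$ and follows formally. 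The separable-generation hypothesis is what underwrites both, by forcing the sp-completions to coincide and by making the ramified-group formalism of \S\S 10--11 applicable, so that the classification established in \S\S 3--11 can be read off at the level of points.
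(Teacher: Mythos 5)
There is a genuine gap: your reduction feeds the pair (source $Spec(K)$, target $X$) into the two-variety bijection $Hom(X,Y)\cong Hom_{\pi _{1}^{et}(k(X)),\pi _{1}^{et}(k(Y))}^{out}\left( \pi _{1}^{et}(X),\pi _{1}^{et}(Y)\right)$ (Theorem 2.2 of the paper; what you call Theorem 2.3), but that theorem's hypotheses fail for this pair, in both respects. Matching the outer homomorphism sets forces $Spec(K)$ to play the role of the \emph{source} and $X$ the \emph{target}, and the separability hypothesis of Theorem 2.2 constrains the function field of the source over that of the target: it would read ``$K$ is canonically separable over $k(X)$,'' which is the reverse of what is given and is in fact vacuous, since a section $Spec(K)\to X$ is not dominant and induces no map $k(X)\to K$ at all. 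The common-sp-completion hypothesis fails too: by Remark 8.6 and Theorem 8.7, an sp-completion of $Spec(K)$ must have function field exactly a separable closure of $K$, whereas $X_{sp}$ has function field $k(X)^{sep}$; for $\dim X\geq 1$ these differ, so the sp-completion built from $X$ cannot ``simultaneously serve'' $Spec(K)$ --- your argument conflates domination of separable closures with the equality the definition demands. The same direction problem undercuts your fallback route: a section $Spec(K)\to X$ is not a morphism of ${\mathbb{S}\mathbbm{c}\mathbbm{h}(K)}$ or ${\mathbb{S}\mathbbm{c}\mathbbm{h}(K)}_{0}$ at all (membership of $f\colon X\to Y$ requires $k(X)$ to be canonically separably generated, resp.\ separable, over $k(Y)$), so the full faithfulness of $\tau$ in Theorem 1.3 says nothing directly about $Hom(Spec(K),X)$, and Lemma 12.2 (which assumes $k(X)=k(Y)$ and $Sp[X]=Sp[Y]$) is likewise inapplicable.

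This is precisely why the paper does not deduce the section statement as an instance of the two-variety statement. Its proof cites Theorem 1.3 together with Lemma 12.4 --- the lemma whose hypothesis, ``$k(X)$ separably generated over $k(Y)$,'' is satisfied here with $Y=Spec(K)$, i.e.\ with the field extension running in the direction the theorem actually supplies --- and Lemma 12.4's content (a homomorphism $Gal(k(X)^{sep}/k(X))\to Gal(K^{sep}/K)$ and a canonical bijection $Hom(X,Y)\cong Hom(X_{sp},Y_{sp})$, fed by the monodromy-action lemmas of \S 11) is the device by which rational points, rather than dominant morphisms, are matched with outer homomorphisms $\pi _{1}^{et}(K)\to \pi _{1}^{et}(X)$. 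What you did get right: the translation of $\Gamma(X/K)$ into $K$-morphisms $Spec(K)\to X$, the identification of the two group pairs with $\tau(Spec(K))$ and $\tau(X)$, and the diagnosis that surjectivity onto group-theoretic sections is the substantive half. But the bridge you build to the paper's established results rests on hypotheses that are false for the pair $(Spec(K),X)$, so the proposal does not go through as written.
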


\begin{proof}
It is immediate from \emph{Theorem 1.3} and \emph{Lemma 12.4}.
\end{proof}

\subsection{An interpretation given by Galois groups}

For a field $L$, set the following symbols

\begin{itemize}
\item $G(L)\triangleq$ the absolute Galois group $Gal(L^{sep}/L)$;

\item $G(L)^{au}\triangleq$ the Galois group $Gal(L^{au}/L)$ of the maximal
formally unramified extension $L^{au}$ of $L$ (see \emph{Definition 9.7}
below).
\end{itemize}

For \emph{Theorems 2.2-3},
there is the following version of Galois groups of fields.

\begin{theorem}
Let $X$ and $Y$ be two algebraic $K$-varieties. Suppose that $k\left( X\right) $ is canonically separable over $k\left( Y\right)$ and $X,Y$ have a common sp-completion. Then there is a bijection
\begin{equation*}
Hom\left( X,Y\right) \cong Hom_{G(k(X)),G(k(Y)) }^{out}\left( G(k(X))^{au}
,G(k(Y))^{au} \right)
\end{equation*}
between sets.
\end{theorem}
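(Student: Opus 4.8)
The plan is to deduce \emph{Theorem 2.5} directly from \emph{Theorem 2.2} by rewriting both group pairs that appear in the outer homomorphism set in purely Galois-theoretic terms. \emph{Theorem 2.2} already supplies the bijection
$$Hom(X,Y)\cong Hom_{\pi_1^{et}(k(X)),\pi_1^{et}(k(Y))}^{out}(\pi_1^{et}(X),\pi_1^{et}(Y)),$$
so it suffices to produce canonical isomorphisms of group pairs identifying $(\pi_1^{et}(X),\pi_1^{et}(k(X)))$ with $(G(k(X))^{au},G(k(X)))$, and likewise for $Y$, and then to check that these isomorphisms transport one outer homomorphism set onto the other.

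First I would invoke the two identifications established in \S\S 9--10. For the function field one has the standard identification $\pi_1^{et}(k(X))=\pi_1^{et}(Spec(k(X)))\cong Gal(k(X)^{sep}/k(X))=G(k(X))$, the étale fundamental group of the spectrum of a field (the generalized form being \emph{Theorem 10.2}). For the variety itself one has the computation of \S 10, namely $\pi_1^{et}(X)\cong Gal(k(X)^{au}/k(X))=G(k(X))^{au}$, where $k(X)^{au}$ is the maximal formally unramified extension whose well-definedness is proved in \S 9. The same two identifications apply verbatim with $X$ replaced by $Y$.

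The substantive step — and the one I expect to be the main obstacle — is to check that these two identifications are compatible with the structure homomorphisms of the group pairs, so that the quotients $\pi_1/p(G)$ and $\pi_2/q(H)$ entering \emph{Definition 1.1} are carried to the corresponding Galois-theoretic quotients. The natural Galois-side comparison is the restriction map $G(k(X))=Gal(k(X)^{sep}/k(X))\to Gal(k(X)^{au}/k(X))=G(k(X))^{au}$ arising from $k(X)^{au}\subseteq k(X)^{sep}$, and I would verify that under the identifications of \S\S 9--10 it corresponds to the canonical map relating $\pi_1^{et}(X)$ and $\pi_1^{et}(k(X))$ in the group pair. Granting this, an element of the outer homomorphism set on the étale side — a map $\sigma$ of the shape in \emph{Definition 1.1} induced by a homomorphism $\pi_1^{et}(X)\to\pi_1^{et}(Y)$ — is carried bijectively to the analogous $\sigma$ induced by the associated homomorphism $G(k(X))^{au}\to G(k(Y))^{au}$, yielding
$$Hom_{\pi_1^{et}(k(X)),\pi_1^{et}(k(Y))}^{out}(\pi_1^{et}(X),\pi_1^{et}(Y))\cong Hom_{G(k(X)),G(k(Y))}^{out}(G(k(X))^{au},G(k(Y))^{au}).$$
Composing this with the bijection of \emph{Theorem 2.2} then gives the asserted bijection.

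The only delicate points beyond this bookkeeping are the functoriality and canonicity of the identification $\pi_1^{et}(X)\cong G(k(X))^{au}$: one must ensure the isomorphisms are natural, so that a single scheme morphism $f:X\to Y$ induces compatible data on both the étale and the Galois side, and one must confirm that the hypotheses ``$k(X)$ canonically separable over $k(Y)$'' together with ``$X,Y$ have a common $sp$-completion'' are exactly what guarantee that the induced map on maximal formally unramified extensions is well-defined. Both ingredients are already encoded in the earlier sections, so that the proof reduces to assembling them and invoking \emph{Theorem 2.2}.
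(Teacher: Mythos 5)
Your proposal is correct and follows essentially the same route as the paper: the paper disposes of this statement with the single remark that it follows from \emph{Theorem 2.2} (together with \emph{Theorem 2.3} for its companion), implicitly using the identifications $\pi_1^{et}(k(X))\cong G(k(X))$ and $\pi_1^{et}(X)\cong Gal(k(X)^{au}/k(X))=G(k(X))^{au}$ from \S\S 9--10, which are exactly the identifications you spell out. Your write-up merely makes explicit the compatibility-of-group-pairs bookkeeping that the paper leaves unstated.
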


\begin{theorem}
Let $X$ be an algebraic $K$-variety. Suppose that $k(X)$ is separably generated over $K$. Then there is a bijection
\begin{equation*}
\Gamma \left( X/K\right) \cong Hom_{G(K),G(k(X)) }^{out}\left( G(K)^{au}
,G\left( k(X)\right)^{au} \right)
\end{equation*}
between sets.
\end{theorem}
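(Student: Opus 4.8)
The plan is to deduce the asserted bijection from the \'{e}tale statement of \emph{Theorem 2.3} by replacing each \'{e}tale fundamental group occurring there with its description as a Galois group. Two canonical identifications are needed. First I would invoke, for any field $L$, the fact that $Spec(L)$ is zero-dimensional and that its connected finite \'{e}tale covers are exactly the spectra of the finite separable extensions of $L$, so that
\[
\pi_1^{et}(L)=\pi_1^{et}(Spec(L))\;\cong\;G(L)=Gal(L^{sep}/L),
\]
which is the generalized statement recorded in \emph{Theorem 10.2}. Applied to $L=K$ and to $L=k(X)$, this identifies the two ambient groups in the subscript of the outer homomorphism set of \emph{Theorem 2.3}, namely $\pi_1^{et}(K)$ and $\pi_1^{et}(k(X))$, with $G(K)$ and $G(k(X))$ respectively.

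Next I would bring in the substantive identification, which is the computation of the algebraic fundamental group carried out in \S\S 9--10. For an algebraic $K$-variety $X$ with $k(X)$ separably generated over $K$, the maximal formally unramified extension $k(X)^{au}$ of \emph{Definition 9.7} is the function field of the universal pro-\'{e}tale cover of $X$, whence
\[
\pi_1^{et}(X)\;\cong\;G(k(X))^{au}=Gal\bigl(k(X)^{au}/k(X)\bigr).
\]
The well-definedness of $k(X)^{au}$ established in \S 9 is precisely what makes this isomorphism canonical. Applying the same statement to the zero-dimensional scheme $Spec(K)$ gives $\pi_1^{et}(K)\cong G(K)^{au}$; here $K^{au}=K^{sep}$, so $G(K)^{au}=G(K)$, and writing the source group as $G(K)^{au}$ merely keeps the group-pair notation uniform over all objects of ${\mathbb{S}\mathbbm{c}\mathbbm{h}(K)}$.

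Combining the two identifications, the source group pair $(\pi_1^{et}(K),\pi_1^{et}(K))$ attached to $Spec(K)$ becomes $(G(K)^{au},G(K))$ and the target pair $(\pi_1^{et}(X),\pi_1^{et}(k(X)))$ becomes $(G(k(X))^{au},G(k(X)))$; it then remains to transport the outer homomorphism set through these isomorphisms, so that
\[
Hom_{\pi_1^{et}(K),\pi_1^{et}(k(X))}^{out}\bigl(\pi_1^{et}(K),\pi_1^{et}(X)\bigr)\;\cong\;Hom_{G(K),G(k(X))}^{out}\bigl(G(K)^{au},G(k(X))^{au}\bigr),
\]
and composing with the bijection of \emph{Theorem 2.3} yields the claim for $\Gamma(X/K)$. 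The hard part will be exactly this transport: I must verify that the structure homomorphisms $p,q$ of \emph{Definition 1.1} are respected, i.e. that under the identifications the canonical surjection $G(k(X))\twoheadrightarrow G(k(X))^{au}$, given by restriction to $k(X)^{au}\subseteq k(X)^{sep}$, matches the map on fundamental groups induced by the generic point $Spec(k(X))\to X$, so that the quotients $\pi/p(G)$ and $\pi_2/q(H)$ in the two outer homomorphism sets coincide. Checking that $\pi_1^{et}(X)\cong G(k(X))^{au}$ is natural with respect to the inclusion of the generic point and sits in the required commutative square is where the explicit construction of the formally unramified extension in \S 9 and the monodromy action of \S 11 enter; once these are in hand, the remainder is a formal transport of structure through \emph{Theorem 2.3}.
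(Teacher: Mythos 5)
Your proposal is correct and takes essentially the same route as the paper: the paper's entire proof of this statement is the observation that it follows from \emph{Theorem 2.3} by identifying $\pi_1^{et}(K)\cong G(K)=G(K)^{au}$, $\pi_1^{et}(k(X))\cong G(k(X))$, and $\pi_1^{et}(X)\cong G(k(X))^{au}$ (via \emph{Theorem 10.2}), which is exactly the transport of structure you describe. If anything, your write-up is more explicit than the paper's one-sentence proof, since you also flag the compatibility of the structure homomorphisms $p,q$ that the paper leaves entirely implicit.
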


It is seen  that \emph{Theorems 2.4-5} hold from \emph{Theorems 2.2-3} above.

\section{Basic Definitions}

Let's fix notation and terminology in the present paper. They will be used in the following sections.

\subsection{Convention}

For an integral domain $D$, let $Fr(D)$ denote the field of fractions of $D$.

In particular, let $D$
be contained in a field $\Omega $.
In the paper $Fr(D)$ will always be assumed to be contained in $\Omega $.

For a field $L$, set
\begin{itemize}
\item $L^{sep}\triangleq$ the separable closure of $L$;

\item $L^{al}$ (or $\overline{L}$) $\triangleq$ an algebraic closure of  $L$.
\end{itemize}

By an \textbf{integral $K$-variety} $X$ in the paper we will understand an integral scheme over a field $K$ (not necessarily of finite type).

\subsection{Quasi-galois extension of a function field}

Assume that $L $ is an extension over a  field $K$.  Let $Gal(L/K)$ be the Galois group of $L$
over $K$.
Note that here  $L$ is not necessarily algebraic over $K$.

\begin{definition}
The field
$L$ is said to be \textbf{Galois} over $K$ if $K$ is the
invariant subfield of $Gal(L/K)$.
\end{definition}

For example, $\mathbb{Q}(t)$ is Galois over $\mathbb{Q}$. Here, $t$ is a variable over $\mathbb{Q}$.

Now we extend the notion of quasi-galois  from algebraic extensions to function fields.

\begin{definition}
The field
$L$ is said to be \textbf{quasi-galois} over $K$ if each irreducible
polynomial $f(X)\in F[X]$ that has a root in $L$ factors completely in $L%
\left[ X\right] $ into linear factors for any subfield $F$ with $K\subseteq
F\subseteq L$.
\end{definition}

Let $D\subseteq D_{1}\cap D_{2}$ be three integral domains. Then $D_{1}$ is
said to be \textbf{quasi-galois} over $D$ if the fraction field $Fr\left( D_{1}\right) $ is
quasi-galois over $Fr\left( D\right) $.

\begin{definition}
The ring
$D_{1}$ is said to be a \textbf{conjugation} of $D_{2}$ over $D$ if there is
an $F-$isomorphism $\tau:Fr(D_{1})\rightarrow Fr(D_{2})$ such that $\tau(D_{1})=D_{2}$,
where $F\triangleq k(\Delta)$, $k\triangleq Fr(D)$, $\Delta$ is a transcendental basis of the field $Fr(D_{1})$ over $k$, and $F$
is contained in $Fr(D_{1})\cap Fr(D_{2})$.

In such a case, $D_{1}$ is also said to be a \textbf{$D-$conjugation} of $D_{2}$.
\end{definition}

Replacing rings by fields, we have a definition that a field $L_{1}$ is said to be a \textbf{conjugation} of a field $L_{2}$ over a field $K$, where $K$ is assumed to be contained in the intersection $L_{1}\cap L_{2}$. Note that in such a case, we must have $\overline{L_{1}}=\overline{L_{2}}$.

\subsection{Essentially affine scheme}

Let $X$ be a scheme. As usual, an \textbf{affine covering} of $X$
is a family $\mathcal{C}_{X}=\{(U_{\alpha },\phi _{\alpha };A_{\alpha
})\}_{\alpha \in \Delta }$ such that for each $\alpha \in \Delta $, $\phi
_{\alpha }$ is an isomorphism from scheme $(U_{\alpha },\mathcal{O}_{X}|_{U_{\alpha }})$ onto scheme $(Spec{( A_{\alpha })},\mathcal{O}_{Spec{( A_{\alpha }})})$, where $A_{\alpha }$ is
a commutative ring with identity.

Each element $(U_{\alpha },\phi _{\alpha };A_{\alpha })\in \mathcal{C}_{X}$ is
called a \textbf{local chart}. For the sake of brevity, a local chart $%
(U_{\alpha},\phi_{\alpha};A_{\alpha })$ will be denoted by $U_{\alpha}$ or $%
(U_{\alpha},\phi_{\alpha})$.

An affine covering $\mathcal{C}_{X}$ of $(X, \mathcal{O}_{X})$ is said to be
\textbf{reduced} if $U_{\alpha}\neq U_{\beta} $ holds for any $\alpha\neq
\beta$ in $\Delta$.

\begin{definition}
An affine covering $\{(U_{\alpha },\phi _{\alpha };A_{\alpha })\}_{\alpha
\in \Delta }$ of $X$ is said to be an \textbf{affine patching} of $X$ if $U_{\alpha }=SpecA_{\alpha }$ and the
map $\phi _{\alpha }$ is the identity map on the underlying space $U_{\alpha }$
for each $\alpha \in \Delta .$
\end{definition}

Let $\mathfrak{Comm}$ be the category of commutative rings with identity.
For a given field $\Omega$, let $\mathfrak{Comm}(\Omega)$ be the category
consisting of the subrings of $\Omega$ and their isomorphisms.

\begin{definition}
Let $\mathfrak{Comm}_{0}$ be a subcategory of $\mathfrak{Comm}$.
 An affine
covering $\{(U_{\alpha},\phi_{\alpha};A_{\alpha })\}_{\alpha \in \Delta}$ of
$X$ is said to be \textbf{with values} in $\mathfrak{Comm}_{0}$ if for each $%
\alpha \in \Delta$ there are $\mathcal{O}_{X}(U_{\alpha})=A_{\alpha}$ and $U_{\alpha}=Spec(A_{\alpha})$, where
 $A_{\alpha }$ is a ring contained in $\mathfrak{Comm}_{0}$.

In particular, an affine covering $\mathcal{C}_{X}$ of $X$ with values in $%
\mathfrak{Comm}(\Omega)$ is said to be \textbf{with values} in the field $\Omega$.
\end{definition}

Let $\mathcal{C}_{X}=\{(U_{\alpha},\phi_{\alpha};A_{\alpha })\}_{\alpha \in \Delta}$ be a reduced affine covering of
$X$ with values in a field $\Omega$. For any $(U_{\alpha},\phi_{\alpha};A_{\alpha }),(U_{\beta},\phi_{\beta};A_{\beta})\in \mathcal{C}_{X}$, we say $$(U_{\alpha},\phi_{\alpha};A_{\alpha })=(U_{\beta},\phi_{\beta};A_{\beta})$$ if and only if $$U_{\alpha}=U_{\beta}, \, \phi_{\alpha}=\phi_{\beta}.$$ That is, we will always neglect the map $\phi_{\alpha}$ for a local chart $(U_{\alpha},\phi_{\alpha};A_{\alpha })$ in such a $\mathcal{C}_{X}$.

For brevity, a scheme is said to be \textbf{essentially affine} in $\Omega$ if it has a reduced affine covering with values in $\Omega$.

It will be seen that essentially affine schemes have many properties like affine schemes.

\subsection{Essentially equal scheme}

By affine covering with values in a field, it is seen that affine open sets in a scheme is measurable and the non-affine open sets are unmeasurable.
So we can neglect the non-affine open sets in an evident manner, where almost every property of the scheme is preserved.

Now suppose that there are  two structure sheaves $\mathcal{O}_{X}$ and $\mathcal{O}^{\prime}_{X}$  on the underlying space of an integral scheme $X$.

\begin{definition}
 The two integral schemes $(X,\mathcal{O}_{X})$ and $(X, \mathcal{O}^{\prime}_{X})$ are said to be \textbf{essentially equal} provided that for any open set $U$ in $X$, there is an equivalence relation
 $$U \text{ is affine open in }(X,\mathcal{O}_{X}) \Longleftrightarrow \text{ so is }U \text{ in }(X,\mathcal{O}^{\prime}_{X})$$ and in such a case, either $D_{1}=D_{2}$ holds or  the two conditions below are both satisfied\footnote{By such additional conditions, we have a sufficiently large number of integral schemes so that we can give a computation of \'{e}tale fundamental groups.}:
  \begin{itemize}
  \item $Fr(D_{1})=Fr(D_{2})$.

  \item For any nonzero $x\in Fr(D_{1})$, there is a relation $$x\in D_{1}\bigcap D_{2}$$ or there is an equivalence relation $$x\in D_{1}\setminus D_{2} \Longleftrightarrow x^{-1}\in D_{2}\setminus D_{1}.$$
  \end{itemize}
 Here, $D_{1}=\mathcal{O}_{X} (U)$ and $D_{2}=\mathcal{O}^{\prime}_{X} (U)$.
\end{definition}

For example, consider the discrete valuation ring $$\mathbb{Z}_{(p)}=\{\frac{r}{s}:r,s\in \mathbb{Z},s\not=0,(p,s)=1\}$$ of $Spec(\mathbb{Q})$ for a prime $p$. It is clear that $Spec(\mathbb{Z}_{(3)})$ and $Spec(\mathbb{Z}_{(5)})$ are not essentially equal.

In deed, let $\dim X=1$. Suppose that the integral schemes $(X,\mathcal{O}_{X})$ and $(X, \mathcal{O}^{\prime}_{X})$ are {essentially equal}. Then $\mathcal{O}_{X}(U)$ and $\mathcal{O}^{\prime}_{X}(U)$ have the same discrete valuation for an affine open set $U$ in $X$.

\begin{definition}
Any two schemes $(X,\mathcal{O}_{X})$ and $(Z,\mathcal{O}_{Z})$ are said to be
\textbf{essentially equal} if the underlying spaces of $X$ and $Z$ coincide
with each other and the schemes $(X,\mathcal{O}_{X})$ and $(X,\mathcal{O}_{Z})$ are essentially equal.
\end{definition}

It is  seen that scheme that are essentially equal must be isomorphic.

\subsection{Quasi-galois closed affine covering}

Assume that $f:X\rightarrow Y$ is a surjective morphism between
integral schemes. Fixed an algebraic closure $\Omega$ of the function field $%
k(X)$.

\begin{definition}
A reduced affine covering $\mathcal{C}_{X}$ of $X$ with values in $\Omega $
is said to be \textbf{quasi-galois closed} over $Y$ by $f$ if there exists a
local chart $(U_{\alpha }^{\prime },\phi _{\alpha }^{\prime };A_{\alpha
}^{\prime })\in \mathcal{C}_{X}$ such that $U_{\alpha }^{\prime }\subseteq
\varphi^{-1}(V_{\alpha})$ holds

\begin{itemize}
\item for any affine open set $V_{\alpha}$ in $Y$;

\item for any $(U_{\alpha },\phi _{\alpha };A_{\alpha })\in \mathcal{C}_{X}$ with $U_{\alpha }\subseteq
f^{-1}(V_{\alpha})$;

\item for any conjugate $A_{\alpha }^{\prime }$ of $A_{\alpha }$ over $%
B_{\alpha}$,
\end{itemize}
where $B_{\alpha}$ is the canonical image of $\mathcal{O}_{ Y}(V_{\alpha})$
in $k(X)$ via $f$.
\end{definition}

\subsection{Quasi-galois closed scheme}

Let $X$ and $Y$ be integral schemes. Suppose that
 $f:X\rightarrow Y$ is a surjective morphism. Denote by $Aut\left( X/Y\right) $ the group of
automorphisms of $X$ over $Y$.

An integral scheme $Z$ is said to be a \textbf{conjugate} of $X$ over $Y$ if
there is an isomorphism $\sigma :X\rightarrow Z$ over $Y$.

\begin{definition}
The scheme $X$ is said to be \textbf{quasi-galois closed} (or \textbf{\emph{qc}} for short)
over $Y$ by $f$ if there is
an algebraically closed field $\Omega$ and a reduced affine covering $\mathcal{C}_{X}$ of $X$ with values in $\Omega $ such that for any conjugate
$Z$ of $X$ over $Y$ the two conditions are both satisfied:

\begin{itemize}
\item $(X,\mathcal{O}_{X})$ and $(Z,\mathcal{O}_{Z})$ are essentially equal if $Z$ is essentially affine in $\Omega$.

\item $\mathcal{C}_{Z}\subseteq \mathcal{C}_{X}$ holds if $\mathcal{C}_{Z}$
is a reduced affine covering of $Z$ with values in $\Omega $.
\end{itemize}
\end{definition}

\begin{remark}
In the above definition, the field $\Omega$ enables the affine
open subschemes of the integral scheme $X$ to be \emph{measurable} while the other open
subschemes of $X$ are still \emph{unmeasurable}. In particular, each scheme $X$ that is \emph{qc} over $Y$ must be essentially affine.
\end{remark}

\begin{remark}
Let $\Omega$ be the algebraically closed field  in \emph{Definition 3.9}.

$(i)$ By  $\Omega$, all the rings of affine open sets in
$X$ are taken to be as subrings of the same ring $\Omega$ so that they can
be compared with each other.

$(ii)$ By $\Omega$, we can restrict ourselves only to consider
the function fields which have the same variables over a given field.
\end{remark}

\begin{remark}
It is seen that in \emph{Definition 3.9}, the affine covering $\mathcal{C}_{X}$ of $X$ is
maximal by set inclusion. In fact, $\mathcal{C}_{X}$ is the \emph{natural affine structure}
of $X$ with values in $\Omega$ (see \cite{An2} for definition). Conversely, it can be proved
that a quasi-galois closed scheme has a unique natural affine structure
with values in $\Omega$ (see \cite{An2,An7}).

In other words, $\Omega$ can be chosen to be an algebraic closure of the
function field $k(X)$; $\mathcal{C}_{X}$ is the unique maximal affine
covering of $X$ with values in $\Omega$ (see \emph{Remark 5.2} in \S \emph{5} below).
\end{remark}

\section{Galois Extensions of Function Fields}

\subsection{Galois extension of a function field}

In this section we will prove the following result about a Galois extension of a function field.

\begin{theorem}
Let $L$ be a finitely generated extension of a field $K$. Then  $L$ is Galois over $K$ if and only if  $L$ is quasi-galois  and separably generated over $K$.
\end{theorem}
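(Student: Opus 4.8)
The plan is to treat this as a function-field analogue of the classical equivalence ``Galois $=$ normal $+$ separable,'' with \emph{quasi-galois} playing the role of normality and \emph{separably generated} that of separability. Throughout I set $n=\operatorname{trdeg}_K L$ and fix an algebraic closure $\Omega=\overline{L}$.

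For the forward direction (Galois $\Rightarrow$ quasi-galois and separably generated) I handle the two conclusions separately. For separable generation I argue by contradiction: if $L/K$ were not separably generated then, working in characteristic $p$ and invoking MacLane's criterion, one extracts an element $b\in L\setminus K$ that is purely inseparable over $K$, say $b^{p^{e}}\in K$. For any $\sigma\in Gal(L/K)$ one then has $\sigma(b)^{p^{e}}=\sigma(b^{p^{e}})=b^{p^{e}}$, and since $p^{e}$-th roots are unique in characteristic $p$ this forces $\sigma(b)=b$; thus $b$ lies in the invariant field but not in $K$, contradicting that $L$ is Galois over $K$. For the quasi-galois property I adapt the classical orbit argument: given an intermediate field $F$ and an irreducible $f\in F[X]$ with a root $\alpha\in L$, the group $Gal(L/F)$ permutes the finitely many roots of $f$ lying in $L$, so the orbit of $\alpha$ is finite and the monic polynomial $g=\prod_{\beta\in\text{orbit}}(X-\beta)$ has coefficients fixed by $Gal(L/F)$; the point to establish is that these coefficients in fact lie in $F$, after which $f\mid g$ forces $f=g$ and $f$ splits in $L$.

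The reverse direction (quasi-galois and separably generated $\Rightarrow$ Galois) proceeds by using separable generation to choose a separating transcendence basis $\Delta=\{t_{1},\dots,t_{n}\}$ and setting $K_{0}=K(\Delta)$, so that $L/K_{0}$ is finite separable. Applying the quasi-galois hypothesis with $F=K_{0}$ shows $L/K_{0}$ is also normal, hence finite Galois in the classical sense; therefore $L^{Gal(L/K_{0})}=K_{0}$, and a fortiori the invariant field $L^{Gal(L/K)}$ is contained in $K_{0}$. It then remains to show that every element of $K_{0}\setminus K$ is moved by some $K$-automorphism of $L$. For $a\in K(\Delta)\setminus K$ one first chooses a $K$-automorphism $\rho$ of the purely transcendental field $K(\Delta)$ with $\rho(a)\ne a$ (affine substitutions $t_{i}\mapsto t_{i}+c_{i}$ suffice after a short argument), and then extends $\rho$ to a $K$-automorphism of $L$.

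The main obstacle, present in both directions, is the extension step in the transcendental directions: turning an abstract $K$-embedding into an automorphism, i.e. proving $\tau(L)=L$. Concretely, after extending a nontrivial $\rho\in Aut(K(\Delta)/K)$ to an embedding $\tilde{\rho}\colon L\hookrightarrow\Omega$, one must show that the $\rho$-twisted minimal polynomial $m^{\rho}$ of a primitive element of $L/K_{0}$ again splits in $L$; only then is $\tilde{\rho}(L)=L$. Guaranteeing this is exactly where the full force of \emph{Definition 3.2}---the quantification over every intermediate field $F$, not merely $F=K_{0}$---together with the basis-fixing notion of conjugation of \emph{Definition 3.3} must be used. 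I would isolate the needed fact as a lemma stating that, under the quasi-galois hypothesis, every $K$-embedding of $L$ into $\Omega$ has image $L$ once it is normalized to fix the chosen transcendence basis; the necessity of this normalization is clear from examples such as $t\mapsto t^{2}$, a non-surjective $K$-embedding of $K(t)$. This lemma, and the reconciliation of the twisted polynomials with normality of $L/K_{0}$, is the step I expect to be the most delicate.
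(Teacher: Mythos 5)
Your forward direction contains a genuine error at the separable-generation step. You propose to extract, from ``$L/K$ not separably generated,'' an element $b\in L\setminus K$ with $b^{p^{e}}\in K$. MacLane's criterion does not give this: it asserts failure of linear disjointness of $L$ from $K^{1/p}$ over $K$, which is strictly weaker than the presence of purely inseparable elements in $L\setminus K$. Concretely, take $K=\mathbb{F}_{p}(s,t)$ and $L=K(u,v)$ with $v^{p}=su^{p}+t$. Here $\operatorname{trdeg}_{K}L=1$ while $\dim_{L}\Omega_{L/K}=2$, so $L/K$ is not separably generated; yet (this is the standard example) $K$ is algebraically closed in $L$, so $L\setminus K$ contains no purely inseparable element at all. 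Since your extraction uses only the failure of separable generation and never the Galois hypothesis, it collapses on this example. The paper takes a different route at this point: it claims that Galois over $K$ forces $L$ to be a Galois, hence separable, algebraic extension of the purely transcendental field $F_{0}$ generated by a ``nice basis,'' and separable generation follows from that; whatever its merits, that is not your argument, and your argument as written is false.

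Your orbit argument for quasi-galois fails at exactly the point you flag. The coefficients of $g$ lie in the fixed field $L^{Gal(L/F)}$, and you need $L^{Gal(L/F)}=F$ for \emph{every} intermediate field $F$; but being Galois over $K$ does not descend to intermediate fields. Take $L=\mathbb{Q}(t)$ (the paper's own example of a Galois extension of $K=\mathbb{Q}$) and $F=\mathbb{Q}(t^{3})$. Every $\mathbb{Q}$-automorphism of $\mathbb{Q}(t)$ is a fractional linear substitution, and one fixing $t^{3}$ must send $t\mapsto\omega t$ with $\omega^{3}=1$ and $\omega\in\mathbb{Q}$, hence is the identity; so $Gal(L/F)$ is trivial, $L^{Gal(L/F)}=L\neq F$, and the orbit of the root $t$ of the irreducible polynomial $X^{3}-t^{3}\in F[X]$ yields $g=X-t\notin F[X]$. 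Worse, $X^{3}-t^{3}$ has a root in $L$ but does not split there (no primitive cube root of unity lies in $\mathbb{Q}(t)$), so the conclusion your orbit argument is meant to establish actually fails for this $F$ --- the gap is not reparable by a better argument, and it sits at the same spot where the paper's own proof of this direction rests on an unsupported claim (its appeal to a nonexistent ``Lemma 3.1''). In the reverse direction your skeleton does match the paper's (pass to $K_{0}=K(\Delta)$, get $L/K_{0}$ Galois, then move elements of $K_{0}\setminus K$), but all the work is delegated to the lemma you defer, which is precisely the content of the paper's Lemma 4.3 on conjugations; note also an internal mismatch: your lemma is restricted to embeddings normalized to fix the transcendence basis, while your chosen $\rho$ ($t_{i}\mapsto t_{i}+c_{i}$) fixes no nonconstant rational function in characteristic $0$, so no transcendence basis is fixed pointwise by $\tilde{\rho}$ and the normalized lemma could never be applied to it. The paper's conjugation formalism, which requires the isomorphism to fix some purely transcendental $K(\Delta')$ pointwise, is what is supposed to bridge this step, and your proposal offers no substitute for it.
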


Note that here $L$ is not necessarily algebraic over $K$. After several lemmas, we will prove \emph{Theorem 4.1} at the end of the present section.

\subsection{Recalling preliminary facts on quasi-galois extensions}

Let $L$ be a finitely generated extension of a field $K$.

The elements $w_{1},w_{2},\cdots ,w_{n}\in L $ are said to be a \textbf{$(r,n)-$nice basis} of $L$ over $K$ if the conditions below
are satisfied:

\begin{itemize}
\item $L=K(w_{1},w_{2},\cdots ,w_{n})$;

\item $w_{1},w_{2},\cdots ,w_{r}$ are a transcendental basis of $L$ over $K$;

\item $w_{r+1},w_{r+2},\cdots ,w_{n}$ are a linear basis of $L$ over $K(w_{1},w_{2},\cdots ,w_{r})$.
\end{itemize}
Here $0\leq r\leq n$.

Let's recall preliminary facts on quasi-galois extensions of function fields.

\begin{lemma}
\emph{(\cite{An2,An2*})}
Fixed an intermediate field $K\subseteq F \subsetneqq L$ and an  $x \in L$ that is algebraic over $F$. Let
$z$ be a conjugate of $x$ over $F$.
Then there exists a $(s,m)-$nice basis $v_{1},v_{2},\cdots ,v_{m}$ of $L$ over $F\left( x\right)$ and an $F-$isomorphism $\tau$ from the field
\begin{equation*}
L=F\left( x,v_{1},v_{2},\cdots ,v_{s},v_{s+1},\cdots, v_{m}\right)
\end{equation*}
onto a field of the form
\begin{equation*}
F\left( z,v_{1},v_{2},\cdots ,v_{s},w_{s+1},\cdots, w_{m}\right)
\end{equation*}
such that
\begin{equation*}
\tau (x)=z,\tau (v_{1})=v_{1},\cdots,\tau (v_{s})=v_{s}.
\end{equation*}
Here, $w_{s+1},w_{s+2},\cdots, w_{m}$ are elements contained in an extension
of  $F$.

In particular, we have
\begin{equation*}
w_{s+1}=v_{s+1},w_{s+2}=v_{s+2},\cdots, w_{m}=v_{m}
\end{equation*}
if $z$ is not contained in $F(v_{1},v_{2},\cdots,v_{m})$.
\end{lemma}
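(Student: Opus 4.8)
The plan is to prove the lemma by extending field isomorphisms one layer at a time along a nice basis, letting the conjugate $z$ prescribe the image of $x$. First I would produce the nice basis. Since $x$ is algebraic over $F$, the extension $F(x)/F$ is finite, and because $L/K$ is finitely generated so is $L/F(x)$; hence I may choose a transcendence basis $v_1,\dots,v_s$ of $L$ over $F(x)$ and then, $L$ being finite over $F(x)(v_1,\dots,v_s)$, a linear basis $v_{s+1},\dots,v_m$. This yields an $(s,m)$-nice basis, and I note that $v_1,\dots,v_s$ is simultaneously a transcendence basis of $L$ over $F$, since $F(x)/F$ is algebraic.

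Next I would build $\tau$. As $x$ and $z$ share the same minimal polynomial over $F$, there is an $F$-isomorphism $\sigma\colon F(x)\to F(z)$ with $\sigma(x)=z$. Because $v_1,\dots,v_s$ are algebraically independent over $F$, hence over the algebraic extension $F(z)$, I can extend $\sigma$ to an isomorphism $\sigma'\colon M:=F(x)(v_1,\dots,v_s)\to F(z)(v_1,\dots,v_s)$ fixing each $v_i$. Finally $L$ is a finite extension of $M$, so by the standard theorem on extending isomorphisms to algebraic extensions, $\sigma'$ prolongs to an $F$-isomorphism $\tau$ of $L$ onto a finite extension $L'$ of $F(z)(v_1,\dots,v_s)$ inside a fixed algebraic closure of $F$; setting $w_{s+j}:=\tau(v_{s+j})$ gives $L'=F(z,v_1,\dots,v_s,w_{s+1},\dots,w_m)$ with $\tau(x)=z$ and $\tau(v_i)=v_i$ for $i\le s$, the $w_{s+j}$ lying in an extension of $F$. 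This settles the main assertion. A technical point to watch, since $\mathrm{char}\,K$ is arbitrary, is that $L/M$ may be inseparable; the extension theorem still applies because I only require $\tau$ to carry $L$ onto its image $L'$, not to land inside $L$.

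For the refinement I would choose the linear basis more carefully, so that $N:=F(v_1,\dots,v_m)$ is linearly disjoint from $F(x)$ over $F$; concretely I would take $v_{s+1},\dots,v_m$ inside a subfield of $L$ complementary to $F(x)$, so that $L=F(x)\cdot N$ and $\min_F(x)$ stays irreducible over $N$, i.e. $\min_N(x)=\min_F(x)$. With this choice every $F$-conjugate of $x$, in particular the given $z$, is again a root of $\min_N(x)$, so $z$ is a conjugate of $x$ over $N$; consequently the identity on $N$ (rather than merely $\sigma$) extends to an $F$-isomorphism $\tau\colon L=N(x)\to N(z)$ with $\tau(x)=z$ and $\tau|_N=\mathrm{id}$. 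Then $\tau$ fixes every $v_i$, forcing $w_{s+j}=v_{s+j}$, and the hypothesis $z\notin N$ is precisely what places us in the case where this complement exists, as opposed to the degenerate situation $z\in N$ in which the basis cannot be split off from $F(x)$.

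I expect the main obstacle to be this last step: arranging the linear basis so that $F(x)$ and $N=F(v_1,\dots,v_m)$ are linearly disjoint over $F$ (equivalently that $\min_F(x)$ remains irreducible over $N$), and verifying that this is compatible with, and forced in the relevant cases by, the hypothesis $z\notin F(v_1,\dots,v_m)$. Small examples show that for a carelessly chosen nice basis one can have $z\notin N$ yet $z$ not conjugate to $x$ over $N$, so the existential quantifier on the basis is doing real work and the construction must pin down the good basis. Reconciling the inseparable case with this linear-disjointness argument is the secondary difficulty.
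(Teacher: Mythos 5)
Your proof of the main assertion is correct, and it is in essence the only argument available: choose a transcendence basis $v_{1},\dots ,v_{s}$ of $L$ over $F(x)$, use the $F$-isomorphism $F(x)\to F(z)$ given by conjugacy, extend it to the rational function field $F(x)(v_{1},\dots ,v_{s})\to F(z)(v_{1},\dots ,v_{s})$ fixing each $v_{i}$ (legitimate since the $v_{i}$ stay algebraically independent over the algebraic extension $F(z)$), and then extend to the finite extension $L$ by the isomorphism-extension theorem, which indeed requires no separability. The paper itself offers no proof beyond the phrase ``immediate from preliminary facts on fields,'' so on this part there is nothing of substance to compare; your write-up simply supplies those facts. (One slip: the image $\tau(L)$ lives in an algebraic closure of $F(z)(v_{1},\dots ,v_{s})$, not ``of $F$,'' since the $v_{i}$ are transcendental over $F$.)

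The ``in particular'' refinement, however, has a genuine gap. Your construction requires a subfield $N\subseteq L$ with $L=F(x)\cdot N$ and with the minimal polynomial of $x$ over $F$ remaining irreducible over $N$, and such a complement need not exist. Take $F=\mathbb{Q}$, $L=\mathbb{Q}(2^{1/4})$, $x=\sqrt{2}$, $z=-\sqrt{2}$ (so $s=0$, $m=2$). The only subfields of $L$ are $\mathbb{Q}$, $\mathbb{Q}(\sqrt{2})$, and $L$, and none satisfies both $N\cdot\mathbb{Q}(\sqrt{2})=L$ and $\sqrt{2}\notin N$; your construction cannot even begin. The lemma is nevertheless true in this example, but for a reason your argument never touches: any linear basis $v_{1},v_{2}$ of $L$ over $\mathbb{Q}(\sqrt{2})$ must contain an element $a+b\,2^{1/4}$ with $b\neq 0$, hence $F(v_{1},v_{2})=L\ni z$ and the hypothesis $z\notin F(v_{1},v_{2})$ is unsatisfiable, making the clause vacuous. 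So a correct proof must establish a dichotomy: one can choose the basis so that either $x$ and $z$ are conjugate over $F(v_{1},\dots ,v_{m})$, or else $z\in F(v_{1},\dots ,v_{m})$. Your closing claim that ``the hypothesis $z\notin N$ is precisely what places us in the case where this complement exists'' is exactly this dichotomy, but it is asserted rather than proved, and it also conflates quantifiers: the hypothesis refers to the basis being constructed, not to a pre-given one, so non-membership of $z$ cannot be used as an input to the construction. Your own observation that a carelessly chosen basis can have $z\notin N$ with $x,z$ not conjugate over $N$ (e.g.\ $x=2^{1/4}$, $v_{1}=\sqrt{2}$, $z=i\,2^{1/4}$) is correct and shows the quantifier does real work --- which is precisely why the missing dichotomy is the crux, and why, as it stands, the refinement remains unproven.
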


\begin{proof}
It is immediate from preliminary facts on fields.
\end{proof}

\begin{lemma}
\emph{(\cite{An2,An2*})}
The following
statements are equivalent.

$(i)$ $L$ is quasi-galois over $K$.

$(ii)$ Any conjugation of $L$ over $K$ is contained in $L$.

$(iii)$ There exists one and only one conjugation of $L$
over $K$.

$(iv)$ Take any $x\in L$ and any subfield $K\subseteq
F\subseteq L$. Then $L$ contains all conjugations of $F\left( x\right) $ over $F$.
\end{lemma}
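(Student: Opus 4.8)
The plan is to prove the cycle $(i)\Rightarrow(ii)\Rightarrow(iii)\Rightarrow(i)$ and to close the equivalence by establishing $(i)\Leftrightarrow(iv)$ separately. Throughout I would fix a transcendence basis $\Delta$ of $L$ over $K$ and write $F_{0}=K(\Delta)$, using that $L$ is \emph{finite} over $F_{0}$ since $L$ is finitely generated over $K$. For $(i)\Rightarrow(ii)$, let $L'$ be a conjugation of $L$ over $K$ realized by an isomorphism $\tau\colon L\to L'$ fixing $F_{0}$ (the field version of \emph{Definition 3.3}). Given $x\in L$, its minimal polynomial $g$ over $F_{0}$ is irreducible in $F_{0}[X]$ with root $x\in L$; since $F_{0}$ is an intermediate field and $L$ is quasi-galois over $K$, \emph{Definition 3.2} forces $g$ to split completely in $L[X]$. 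Applying $\tau$ to $g(x)=0$ shows $\tau(x)$ is another root of $g$, hence $\tau(x)\in L$, so $L'=\tau(L)\subseteq L$. For $(ii)\Rightarrow(iii)$, the identity exhibits $L$ as a conjugation of itself, and any conjugation $L'\subseteq L$ via a $F_{0}$-isomorphism satisfies $[L':F_{0}]=[L:F_{0}]<\infty$, which together with $L'\subseteq L$ forces $L'=L$; this gives both existence and uniqueness.

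The implication $(iii)\Rightarrow(i)$ is where the real work lies and where \emph{Lemma 4.2} enters. Arguing by contraposition, suppose $L$ is not quasi-galois: there is an intermediate field $F$ and an $x\in L$, algebraic over $F$, whose minimal polynomial has a conjugate root $z\notin L$. Applying \emph{Lemma 4.2} to $F,x,z$ yields an $F$-isomorphism $\tau\colon L\to L'$ with $\tau(x)=z$ fixing a transcendence basis $v_{1},\dots,v_{s}$ of $L$ over $F(x)$; since $z\notin L$ lies in particular outside $F(v_{1},\dots,v_{m})$, the final clause of \emph{Lemma 4.2} gives $L'=F(z,v_{1},\dots,v_{m})$, and $z\in L'\setminus L$ shows $L'\neq L$. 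The point to verify is that $L'$ is genuinely a conjugation of $L$ \emph{over $K$}: because $x$ is algebraic over $F$, a transcendence basis of $L$ over $K$ is obtained by adjoining $v_{1},\dots,v_{s}$ to a transcendence basis of $F$ over $K$, and $\tau$ fixes all of these (it fixes $F$ entirely and each $v_{i}$). Thus $\tau$ fixes a transcendence basis $\Delta$ of $L$ over $K$ with $K(\Delta)\subseteq L\cap L'$, so $L'$ is a second conjugation of $L$ over $K$ distinct from $L$, contradicting $(iii)$.

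Finally, $(i)\Leftrightarrow(iv)$ is a matter of translating ``conjugate element'' into ``conjugation of $F(x)$''. If $z$ is a root of the minimal polynomial of $x$ over $F$, then $x\mapsto z$ realizes $F(z)$ as a conjugation of $F(x)$ over $F$, and when $x$ is transcendental over $F$ the only conjugation of $F(x)$ over $F$ is $F(x)$ itself. Under $(i)$, the quasi-galois property forces every such $z$ into $L$, whence $F(z)\subseteq L$ and $(iv)$ holds. Conversely, $(iv)$ says every root $z$ of an irreducible $f\in F[X]$ having one root in $L$ satisfies $F(z)\subseteq L$, i.e. $f$ splits completely in $L[X]$, which is exactly $(i)$.

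I expect the main obstacle to be the verification inside $(iii)\Rightarrow(i)$ that the $F$-isomorphism supplied by \emph{Lemma 4.2}, which a priori only fixes $F$, in fact fixes a full transcendence basis of $L$ over $K$ and therefore qualifies as a conjugation over $K$ in the precise sense of \emph{Definition 3.3}; the bookkeeping that ties the nice basis of \emph{Lemma 4.2} to a transcendence basis of $L/K$, and that confirms $z\notin F(v_{1},\dots,v_{m})$ so as to pin down $L'=F(z,v_{1},\dots,v_{m})$ and $L'\neq L$, is the delicate part. The remaining implications are essentially direct unwindings of the definitions of quasi-galois and of conjugation.
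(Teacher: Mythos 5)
Your proof is correct, but it decomposes the equivalence differently from the paper. The paper makes $(iv)$ the hub, proving $(i)\Leftrightarrow(iv)$ and $(ii)\Leftrightarrow(iv)$ together with $(iii)\Rightarrow(ii)$ and $(i)\Rightarrow(iii)$; its essential use of \emph{Lemma 4.2} is in $(ii)\Rightarrow(iv)$, where a conjugate $z$ of $x\in L$ over an intermediate field $F$ is extended to an $F$-conjugation $F\left( z,v_{1},\cdots ,v_{s},w_{s+1},\cdots, w_{m}\right)$ of $L$, which $(ii)$ then forces inside $L$. You instead run the cycle $(i)\Rightarrow(ii)\Rightarrow(iii)\Rightarrow(i)$, attach $(iv)$ by the direct translation $(i)\Leftrightarrow(iv)$, and invoke \emph{Lemma 4.2} in the contrapositive of $(iii)\Rightarrow(i)$: the very same construction yields a conjugation $L'$ of $L$ over $K$ with $z\in L'\setminus L$, hence a second conjugation distinct from $L$ itself, contradicting uniqueness. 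So the key construction is identical, but it is deployed in a different implication, and the surrounding logic is genuinely different. Your arrangement is tidier at two points: your $(ii)\Rightarrow(iii)$ degree count ($L'\subseteq L$ together with $[L':F']=[L:F']<\infty$ forces $L'=L$) replaces the paper's two-case counting argument in $(i)\Rightarrow(iii)$, and you verify explicitly what the paper merely asserts with the phrase ``As $K\subseteq F$'', namely that the $F$-isomorphism supplied by \emph{Lemma 4.2} fixes a full transcendence basis of $L$ over $K$ (a transcendence basis of $F$ over $K$ together with $v_{1},\cdots,v_{s}$) and therefore qualifies as a conjugation over $K$ in the precise sense of \emph{Definition 3.3}. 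One cosmetic imprecision on your side: you fix $\Delta$ and $F_{0}=K(\Delta)$ once and for all, whereas \emph{Definition 3.3} attaches to each conjugation its own transcendence basis, so in $(i)\Rightarrow(ii)$ and $(ii)\Rightarrow(iii)$ the realizing isomorphism fixes $K(\Delta')$ for some basis $\Delta'$, not necessarily your $F_{0}$; your arguments go through verbatim with $F_{0}$ replaced by $K(\Delta')$, since $\Delta'$ is automatically also a transcendence basis of $L$ over $K$.
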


\begin{proof}
Prove $(i)\implies (iv)$. Fixed an $x\in L$ and a
subfield $K\subseteq F\subseteq L.$ If $x$ is a
variable over $F$, the field $F\left( x\right) $ must be contained in $L$ since $F\left( x\right) $
 is the unique conjugation of $F\left( x\right) $ over $F$ by $(i)$.

Let $x$ be algebraic over $F$. Then an $F-$conjugation of $F\left(
x\right)$, which is exactly an $F-$conjugate of $F\left( x\right)$, must be
contained in $L$ by $(i)$.

Prove $(iv) \implies (i)$. Fixed any
subfield $K\subseteq F\subseteq L$. Let $f(X)$ be an  irreducible polynomial over $F$. Take any $x\in L$ such that $f\left(
x\right) =0$. It is seen that an $F-$conjugation of $F(x)$ is nothing other than an $F-$conjugate. Then every $F-$conjugate of $F(x) $ is contained
in $L$ by $(iv)$. Hence, $L$ is quasi-galois  over $K$.

Prove $(iv) \implies (ii)$. Let $H$ be a conjugation of $L$ over $K$.
Fixed any $x_{0}\in H$. Take a $(r,n)-$nice basis $w_{1},w_{2},\cdots
,w_{n} $ of $L$ over $K$ and an isomorphism $\sigma:H\rightarrow L$ over
\begin{equation*}
K_{0}\triangleq K(w_{1},w_{2},\cdots ,w_{r} )
\end{equation*}
such that  $H$ is a $K$-conjugation of $L$ by $\sigma$.

It is clear that $w_{1},
w_{2},\cdots ,w_{r} $ are all contained in the intersection of $H$ and
$L$. It follows that $x_{0}$ must be algebraic over $K_{0}$.

Evidently, the field $K_{0}[x_{0}]$ is a conjugate of the field $K_{0}[\sigma(x_{0})]$ over  $K_{0}
$ and then is a conjugation of $K_{0}[\sigma(x_{0})]$ over  $K$. From $(iv)$ we have $x_{0}\in K_{0}[x_{0}]\subseteq L$. Hence, $H
\subseteq L$.

Prove $(ii)\implies (iv)$. Take any $x\in L$ and any
 subfield  $K\subseteq F\subseteq L$.

If $x$ is a variable over $F$, the field $F\left( x\right) $ that is the unique conjugation of $F\left( x\right) $ itself over $F$ must be
contained in $L$ by $(ii)$.

Suppose that $x$ is algebraic over $F$. Let $z$ be an $F-$conjugate of $x
$. If $F=L$, we have $z=x\in L$ by $(ii)$.

Now let $F\not=L$. From \emph{Lemma 4.2} we have a field of the form
\begin{equation*}
F\left( z,v_{1},v_{2},\cdots ,v_{s},w_{s+1},\cdots, w_{m}\right),
\end{equation*}
which is an $F-$conjugation of $L$. As $K \subseteq F$, we must have
\begin{equation*}
z\in F\left( z,v_{1},v_{2},\cdots ,v_{s},w_{s+1},\cdots,
w_{m}\right)\subseteq L.
\end{equation*}
by $(ii)$ again. Hence, $z \in L$.

Prove $(iii) \implies (ii)$. Trivial.

Prove $(i) \implies (iii)$. Let $L$ be quasi-galois over $K$ and let $H$ be a
conjugation of $L$ over $K$. In the following we will prove $H=L$.

In fact, choose a $(s,m)-$nice basis $v_{1},v_{2},\cdots ,v_{m}$ of $L$ over $K$ and  an $F-$isomorphism $\tau$ of $H$ onto $L$ such that via
 $\tau$ the field $H$ is a conjugate of $L$ over $F$, where
$$
F\triangleq k(v_{1},v_{2},\cdots ,v_{s}).
$$
It is seen that
$
F\subseteq H\subseteq L
$ hold by \emph{Definition 3.2}.

Hypothesize $H\subsetneqq L$. Take any $x_{0}\in L\setminus H$. There are two cases.

\emph{Case (i)}. Let $x_{0}$ be a variable over $H$. We have
\begin{equation*}
\dim_{K}H=\dim_{K}L=s< \infty
\end{equation*}
since $H$ and $L$ are conjugations over $K$. On
the other hand, we have
\begin{equation*}
1+\dim_{K}H=\dim_{K}H(x_{0})\leq\dim_{K}L
\end{equation*}
from $x_{0}\in L\setminus H$, which  will be in contradiction.

\emph{Case (ii)}. Let $x_{0}$ be algebraic over $H$. It is seen that $x_{0}$ is algebraic over $F$. We have
\begin{equation*}
[H:F]=[L:F]< \infty
\end{equation*}
since $H$ is a conjugate of $L$ over $F$. On the other hand, we have
\begin{equation*}
2+[H:F]\leq[H[x_{0}]:F]\leq[L:F]
\end{equation*}
from $x_{0}\in L\setminus H$, which  will be in contradiction.

Then $L\setminus H$ must be empty. Hence,  $L=H$.

This completes the proof.
\end{proof}

\subsection{Proof of \emph{Theorem 4.1}}

Now we give the proof of \emph{Theorem 4.1}:

\begin{proof}

Prove $\Longleftarrow$. Let $L$ be quasi-galois and separably generated over $K$.
If $L$ is algebraic over $K$, it is clear that $L$ is Galois over $K$.

Now suppose that $L$ is a transcendental extension over $K$. It suffices to prove that there exists an automorphism
$
\sigma_{0} \in Gal(L/K)
$
such that $K$ is the invariant subfield of $\sigma_{0}$.

In fact, fixed any $(r,n)-$nice basis $v_{1},v_{2},\cdots ,v_{n}$ of $L$ over $K$. Put $$F_{0}\triangleq K(v_{1},v_{2},\cdots ,v_{r}).$$

Then $L$ is algebraic over $F_{0}$. Here, we have $r\geqslant 1$. By \emph{Lemma 4.3} it is seen that every
conjugation of $L$ over $K$ is exactly $L$ itself. It follows that there is one and only conjugate of $L$ over $F_{0}$. Then $L$ is a quasi-galois algebraic extension of $F_{0}$.

Hence, $L$ is Galois over $F_{0}$ since $L$ is separable over $F_{0}$ from the assumption.
Fixed any $\tau_{0}\in Gal(L/F_{0})$ with $\tau_{0}\not=
id_{L}$.

Let $
\tau_{1}$ be an automorphism of $F_{0}$ over $K$ given by
$$
v_{1}\mapsto \frac{1}{v_{1}}, v_{2}\mapsto \frac{1}{v_{2}}
,\cdots,v_{r}\mapsto \frac{1}{v_{r}}.
$$

Then we have an automorphism $\sigma_{0} \in Gal(L/K)$ defined
by $\tau_{0}$ and $\tau_{1}$ in such a manner
\begin{equation*}
\frac{f(v_{1},v_{2},\cdots ,v_{n})}{g(v_{1},v_{2},\cdots ,v_{n})}\in L
\end{equation*}
\begin{equation*}
\mapsto \frac{f(\tau_{1}(v_{1}),\tau_{1}(v_{2}),\cdots
,\tau_{1}(v_{r}),\tau_{0}(v_{r+1}),\cdots, \tau_{0}(v_{n}))}{
g(\tau_{1}(v_{1}),\tau_{1}(v_{2}),\cdots
,\tau_{1}(v_{r}),\tau_{0}(v_{r+1}),\cdots, \tau_{0}(v_{n}))}\in L
\end{equation*}
for any polynomials $f(X_{1},X_{2},\cdots, X_{n})$ and $g(X_{1},X_{2},%
\cdots, X_{n})\not= 0$ over the field $K$ with $g(v_{1},v_{2},\cdots ,v_{n})\not= 0$.

It is easily seen that we have
\begin{equation*}
g(v_{1},v_{2},\cdots ,v_{n})= 0
\end{equation*}
if and only if
\begin{equation*}
g(v_{1},v_{2},\cdots ,v_{r},\tau_{0}(v_{r+1}),\cdots, \tau_{0}(v_{n}))=0
\end{equation*}
if and only if
\begin{equation*}
g(\tau_{1}(v_{1}),\tau_{1}(v_{2}),\cdots
,\tau_{1}(v_{r}),\tau_{0}(v_{r+1}),\cdots, \tau_{0}(v_{n}))=0.
\end{equation*} Hence, $\sigma_{0}$ is well-defined.

It is seen that $K$ is the invariant subfield of the automorphism $\sigma_{0}$ of $L$ over $K$. Hence,
 $K$ is the invariant subfield of the Galois group $Gal(L/K)$. This proves that $L$ is
Galois over $K$.

Prove $\Longrightarrow$. Let $L$ is Galois over $K$. Fixed any $(r,n)-$nice basis of $L$ over $K$, namely $v_{1},v_{2},\cdots ,v_{n}$. Set $$F_{0}\triangleq K(v_{1},v_{2},\cdots ,v_{r}).$$

By \emph{Lemma 3.1} it is seen that the field $F_{0}$ must be invariant under the Galois group $Gal(L/F_{0})$. It follows that $L$ is a Galois algebraic extension over $F_{0}$. Hence, $L$ is separably generated over $K$.

Let $H$ be a conjugate of $L$ over $F_{0}$. Then $H$ is a conjugation of $L$ over $K$. As $L$ is Galois over $F_{0}$, we have $H=L$; hence, there exists one and only one conjugation of $L$ over $K$ under a $(r,n)-$nice basis of $L$ over $K$.

Now let $$v_{1},v_{2},\cdots ,v_{n}$$ run through all possible $(r,n)-$nice bases of $L$ over $K$. It is seen that there exists one and only one conjugation of $L$ over $K$. From \emph{Lemma 4.3} it is immediate that $L$ is quasi-galois over $K$.
\end{proof}

\section{Key Property of \emph{qc} Scheme}

In this section we will prove a key property of a \emph{qc} scheme and then give a criterion for such a scheme.  By these results, we will obtain an essential and sufficient condition for a \emph{qc} scheme.

\subsection{Key property of a \emph{qc} scheme}

Let $X$ and $Y$ be integral schemes and let $f:X\rightarrow Y$ be a
surjective morphism. We have the following key property for \emph{qc} schemes.

\begin{lemma}
Let $\Omega$ be an algebraic closure of the function field $k(X)$. Suppose that $X$ is {qc}  over $Y$ by $f$.  Then there is a unique
maximal affine covering $\mathcal{C}_{X}$ of $X$ with values in $\Omega $; moreover,
 $\mathcal{C}_{X}$ is quasi-galois closed over $Y$ by $f$.
\end{lemma}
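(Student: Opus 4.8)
The plan is to split the statement into two parts: the existence and uniqueness of a maximal reduced affine covering $\mathcal{C}_X$ of $X$ with values in $\Omega$, which is essentially formal, and the genuinely geometric assertion that this $\mathcal{C}_X$ is quasi-galois closed over $Y$ in the sense of \emph{Definition 3.8}. The hypothesis that $X$ is \emph{qc} over $Y$ is needed only for the second part.

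For the first part, I would use that $X$ is integral: every affine open $U\subseteq X$ has $\mathcal{O}_X(U)$ a domain with $Fr(\mathcal{O}_X(U))=k(X)$, so the fixed canonical embedding $k(X)\hookrightarrow\Omega$ realizes each $\mathcal{O}_X(U)$ as a uniquely determined subring of $\Omega$. I set $\mathcal{C}_X$ to be the family of all local charts $(Spec\,\mathcal{O}_X(U);\mathcal{O}_X(U))$ so obtained, as $U$ runs over the affine opens of $X$. First I would check that $\mathcal{C}_X$ is a reduced affine covering with values in $\Omega$ (it covers $X$ since affine opens form a basis, and under the ``values in $\Omega$'' convention distinct charts correspond to distinct subrings of $\Omega$, so it is reduced). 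Then, because every reduced affine covering with values in $\Omega$ must chart-by-chart coincide with some $\mathcal{O}_X(U)$ sitting inside $\Omega$ through the \emph{same} fixed embedding, any such covering is a subfamily of $\mathcal{C}_X$; this yields both maximality and uniqueness. It also matches the natural affine structure of \emph{Remark 3.12}, so the algebraically closed field witnessing \emph{Definition 3.9} may be taken to be $\Omega$ and its covering to be $\mathcal{C}_X$.

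For the quasi-galois closedness, I would fix an affine open $V_\alpha\subseteq Y$, a chart $A_\alpha=\mathcal{O}_X(U_\alpha)\in\mathcal{C}_X$ with $U_\alpha\subseteq f^{-1}(V_\alpha)$, and a conjugate ring $A'_\alpha$ of $A_\alpha$ over $B_\alpha$, where $B_\alpha$ is the image of $\mathcal{O}_Y(V_\alpha)$ in $k(X)$. By \emph{Definition 3.3} this conjugation is realized by an $F$-isomorphism $\tau:k(X)\to Fr(A'_\alpha)$ with $\tau(A_\alpha)=A'_\alpha$ and $F=k(Y)(\Delta)$ for a transcendence basis $\Delta$ of $k(X)$ over $k(Y)$; in particular $\tau$ fixes $k(Y)$, hence fixes the image of every $\mathcal{O}_Y(V)$ in $k(X)$ pointwise. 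Since $\Omega$ is an algebraic closure of $k(X)$ and $Fr(A'_\alpha)$ is a conjugate field of $k(X)$ (so shares its algebraic closure), I may regard $\tau$ as a $k(Y)$-embedding $k(X)\hookrightarrow\Omega$, whence $A'_\alpha\subseteq\Omega$. I would then transport the whole affine structure of $X$ through $\tau$: the underlying space of $X$ together with the charts $Spec(\tau(\mathcal{O}_X(U)))$ gives an integral scheme $Z$, essentially affine in $\Omega$, and $\tau^{-1}$ induces an isomorphism $\sigma:X\to Z$. Because $\tau$ fixes the images of all $\mathcal{O}_Y(V)$, the morphism $f$ descends to $Z\to Y$ making $\sigma$ an isomorphism over $Y$, so $Z$ is a conjugate of $X$ over $Y$ with reduced affine covering $\mathcal{C}_Z=\{\tau(\mathcal{O}_X(U))\}$ having values in $\Omega$. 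Now the \emph{qc} hypothesis (\emph{Definition 3.9}) forces $\mathcal{C}_Z\subseteq\mathcal{C}_X$, so $A'_\alpha=\tau(A_\alpha)\in\mathcal{C}_X$; and since $\tau$ fixes $B_\alpha$ we have $B_\alpha\subseteq A'_\alpha$, so the chart $U'_\alpha=Spec(A'_\alpha)$ satisfies $U'_\alpha\subseteq f^{-1}(V_\alpha)$. This is exactly the condition of \emph{Definition 3.8}.

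I expect the main obstacle to be the construction and verification of the conjugate scheme $Z$: one must check that conjugating every coordinate ring by the single field isomorphism $\tau$ produces compatible gluing data (which holds because $\tau$ is a ring isomorphism of the common function field and so respects all localizations and intersections), that $Z$ is genuinely integral and over $Y$, and that the essential-equality and covering clauses of \emph{Definition 3.9} apply to $Z$ relative to the fixed $\Omega$ rather than some a priori different witnessing field. Reconciling the witnessing field of the \emph{qc} hypothesis with the chosen algebraic closure $\Omega$ (via \emph{Remark 3.12} and the fact that any two algebraic closures of $k(X)$ are $k(X)$-isomorphic) is the one point where I would argue most carefully.
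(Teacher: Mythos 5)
Your proposal is correct and reaches the statement by a genuinely different route than the paper, and at the crucial step a more complete one. The paper takes the covering $\mathcal{C}'_X$ and the field $\Omega'$ supplied by \emph{Definition 3.9}, derives maximality and uniqueness from the second clause of that definition (in effect applied to $Z=X$ itself), and then proves quasi-galois closedness by pure contradiction with maximality: if the conjugate chart $(U'_\alpha,\phi'_\alpha;A'_\alpha)$ were not in $\mathcal{C}'_X$, then $\{(U'_\alpha,\phi'_\alpha;A'_\alpha)\}\cup\mathcal{C}'_X$ would be a larger reduced affine covering of $X$. That step tacitly assumes exactly what has to be proved, namely that the conjugate ring $A'_\alpha$ is realized as the coordinate ring of some affine open subset of $X$; without that there is no ``new covering of $X$'' to contradict uniqueness, and maximality alone cannot suffice, since an essentially affine scheme with maximal covering that is \emph{not} qc over $Y$ would pass the same test. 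Your proof supplies precisely the missing ingredient: you transport the structure of $X$ along the field isomorphism $\tau$ to build an honest conjugate $Z$ of $X$ over $Y$ carrying the reduced affine covering $\{\tau(\mathcal{O}_X(U))\}$ with values in $\Omega$, and then the second clause of \emph{Definition 3.9}, applied to this $Z$ rather than to $X$, forces $\mathcal{C}_Z\subseteq\mathcal{C}_X$ and hence $A'_\alpha\in\mathcal{C}_X$. This is the only place the qc hypothesis enters in a nontrivial way, and isolating it is the real content of the lemma; what the paper's route buys is brevity, what yours buys is an argument that actually closes. Two points to tighten: your construction of the maximal covering from \emph{all} affine opens must be phrased modulo the paper's identification conventions, since \emph{Definition 3.5} literally requires $U_\alpha=Spec(A_\alpha)$ and \emph{Lemma 5.7} is what converts arbitrary charts into that form; and at the end, the containment $U'_\alpha\subseteq f^{-1}(V_\alpha)$ does not follow from the ring inclusion $B_\alpha\subseteq A'_\alpha$ alone --- you should add that $f$ restricted to the chart $U'_\alpha$ is the morphism induced by that inclusion (or appeal to the essential-equality clause relating $X$ and $Z$), since a ring containment by itself does not determine where $f$ sends the points of $U'_\alpha$.
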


\begin{proof}
From \emph{Definition 3.9} we have
an algebraically closed field $\Omega^{\prime}$ and a reduced affine covering $\mathcal{C}^{\prime}_{X}$ of $X$ with values in $\Omega^{\prime} $ such that for any conjugate
$Z$ of $X$ over $Y$ the two conditions are satisfied:

\begin{itemize}
\item $(X,\mathcal{O}_{X})$ and $(Z,\mathcal{O}_{Z})$ are essentially equal if $Z$ has a reduced affine covering
with values in $\Omega^{\prime}$.

\item $\mathcal{C}_{Z}\subseteq \mathcal{C}^{\prime}_{X}$ holds if $\mathcal{C}_{Z}$
is a reduced affine covering of $Z$ with values in $\Omega ^{\prime}$.
\end{itemize}

It is clear that $\Omega^{\prime}$ contains the function field $k(X)$ and $\mathcal{C}^{\prime}_{X}$ is maximal by set inclusion.

Prove the uniqueness of $\mathcal{C}^{\prime}_{X}$. In deed, let $\mathcal{C}^{\prime\prime}_{X}$ be another reduced affine covering of $X$ with values in $\Omega^{\prime}$ satisfying the two conditions above. Then we must have $\mathcal{C}^{\prime}_{X}=\mathcal{C}^{\prime\prime}_{X}$ according to the second condition.

Prove that $\mathcal{C}^{\prime}_{X}$ is quasi-galois closed over $Y$ by $f$. In fact, take
\begin{itemize}
\item an affine open set $V_{\alpha}$ in $Y$;

\item a $(U_{\alpha },\phi _{\alpha };A_{\alpha })\in \mathcal{C}^{\prime}_{X}$ with $U_{\alpha }\subseteq
f^{-1}(V_{\alpha})$;

\item a conjugate $A_{\alpha }^{\prime }$ of $A_{\alpha }$ over $B_{\alpha}$,
\end{itemize}
where $B_{\alpha}$ is the canonical image of $\mathcal{O}_{ Y}(V_{\alpha})$
in $k(X)$ via $f$.

Then there must exist a
local chart $(U_{\alpha }^{\prime },\phi _{\alpha }^{\prime };A_{\alpha
}^{\prime })\in \mathcal{C}^{\prime}_{X}$ such that $U_{\alpha }^{\prime }\subseteq
\varphi^{-1}(V_{\alpha})$ holds; otherwise, if there is some $(U_{\alpha }^{\prime },\phi _{\alpha }^{\prime };A_{\alpha
}^{\prime })\not\in \mathcal{C}^{\prime}_{X}$, we will obtain a new reduced affine covering $$\{(U_{\alpha }^{\prime },\phi _{\alpha }^{\prime };A_{\alpha
}^{\prime })\}\bigcup \mathcal{C}^{\prime}_{X}$$ of $X$, which is in contradiction to the uniqueness of $\mathcal{C}^{\prime}_{X}$.

It follows that $\mathcal{C}^{\prime}_{X}$ is with values in an algebraic closure $\Omega$ of the function field $k(X)$ from \emph{Definition 3.3}.
\end{proof}

By \emph{Lemma 5.1} we have the following remarks.

\begin{remark}
Let  $\mathcal{C}_{X}$ and $\Omega$ be assumed as in
\emph{Definition 3.9}. There are the following statements.
\begin{itemize}
\item The field $\Omega$ can be chosen to be an algebraic closure of the function field $k(X)$.

\item The affine covering $\mathcal{C}_{X}$ is quasi-galois closed over $Y$ by $f$ and is the unique maximal affine covering of $X$ with values in $\Omega$.
\end{itemize}
\end{remark}

\begin{remark}
The affine covering $\mathcal{C}_{X}$ above is the unique maximal
affine structure of $X$ with values in the algebraic closure $\Omega$ of $k(X)$. In \cite{An2}, we use affine structures to get the key property as in \emph{Lemma 5.1}.
\end{remark}

\subsection{Criterion for \emph{qc} schemes}

Fixed integral schemes $X$ and $Y$. Let $\Omega $ be an
algebraically closed closure of the function field $k\left( X\right) $.

\begin{lemma}
Let $f :X\rightarrow Y$ be a surjective morphism of schemes.
Then  $X$ is qc over $Y$ by $f$ if there is a unique maximal
reduced affine covering $\mathcal{C}_{X}$ of $X$ with values in $\Omega $
such that $\mathcal{C}_{X}$ is quasi-galois closed over $Y$ by $f$.
\end{lemma}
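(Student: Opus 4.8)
This is exactly the converse of Lemma 5.1, so the plan is to verify directly the two defining conditions of a quasi-galois closed scheme in Definition 3.9, using the given covering $\mathcal{C}_X$ and the algebraically closed field $\Omega$. Fix an arbitrary conjugate $Z$ of $X$ over $Y$, realized by an isomorphism $\sigma:X\to Z$ over $Y$. Then $\sigma$ is a homeomorphism of underlying spaces and induces a $k(Y)$-isomorphism $\sigma^{*}:k(Z)\to k(X)$; in particular $k(Z)$ is a conjugation of $k(X)$ over the image $Fr(B)$ of $k(Y)$ in $k(X)$, where $B$ denotes the canonical image of $\mathcal{O}_Y(V)$ under $f$ for an affine open $V\subseteq Y$.

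First I would settle the second condition of Definition 3.9. Suppose $Z$ carries a reduced affine covering $\mathcal{C}_Z$ with values in $\Omega$. For a chart $Spec(C)\in\mathcal{C}_Z$ lying over an affine open $V$ of $Y$, its preimage under $\sigma$ is a chart $Spec(A)$ of $X$ over $V$, and $C=\sigma^{*}(A)$ is a conjugate of $A$ over $B$. This is precisely the configuration governed by the quasi-galois closed hypothesis on $\mathcal{C}_X$ (Definition 3.8), which supplies a chart of $\mathcal{C}_X$ over $V$ whose ring is this conjugate $C$. Hence every chart of $\mathcal{C}_Z$ already lies in $\mathcal{C}_X$, giving $\mathcal{C}_Z\subseteq\mathcal{C}_X$.

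Next, as a stepping stone to the first condition, I would show that $k(X)$ is quasi-galois over $Fr(B)$. Because $X$ is integral, every nonempty chart $A\in\mathcal{C}_X$ satisfies $Fr(A)=k(X)$. Given any conjugation $\tau$ of $k(X)$ over $Fr(B)$, restricting $\tau$ to a chart ring $A$ produces a conjugate $\tau(A)$ of $A$ over $B$, which by the quasi-galois closed property is again a chart of $\mathcal{C}_X$; therefore $\tau(k(X))=Fr(\tau(A))=k(X)$. Thus every conjugation of $k(X)$ over $Fr(B)$ is contained in $k(X)$, so Lemma 4.3 shows $k(X)$ is quasi-galois over $Fr(B)$ and admits a unique conjugation. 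Consequently, whenever $Z$ is essentially affine in $\Omega$, its function field $k(Z)$ coincides with $k(X)$ as a subfield of $\Omega$, and $\sigma^{*}$ restricts to an automorphism of $k(X)$ over $Fr(B)$.

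The hard part will be the first condition of Definition 3.9, namely that $(X,\mathcal{O}_X)$ and $(Z,\mathcal{O}_Z)$ are essentially equal in the sense of Definitions 3.6--3.7. This is the delicate point, because a ring and its image under a function-field automorphism are in general not essentially equal on a single chart, so essential equality cannot be verified one conjugation at a time; it must be extracted from the global rigidity of $\mathcal{C}_X$. The plan here is to use maximality together with the uniqueness hypothesis to identify the chart family of $Z$ with that of $X$: transporting the maximal reduced affine covering of $Z$ through $\sigma$ yields a maximal reduced affine covering of $X$ with values in $\Omega$, and matching it against the unique maximal quasi-galois closed $\mathcal{C}_X$ should force the two families of subrings of $\Omega$ to coincide. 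Granting this coincidence together with the equality $k(X)=k(Z)$ from the previous step, I would finish by checking Definition 3.6 open set by open set: on affine opens common to both sheaves the rings are literally equal, while on the remaining opens the agreement of the chart families yields $Fr(\mathcal{O}_X(U))=Fr(\mathcal{O}_Z(U))$ and the required $x\leftrightarrow x^{-1}$ dichotomy. With both conditions of Definition 3.9 verified, $X$ is quasi-galois closed over $Y$ by $f$.
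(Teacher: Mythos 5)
Your proposal is correct and follows essentially the same route as the paper's own proof: both verify \emph{Definition 3.9} by using the quasi-galois closed hypothesis on $\mathcal{C}_{X}$ to absorb every chart of a conjugate $Z$ with values in $\Omega$ into $\mathcal{C}_{X}$, and then use maximality/uniqueness (the paper, via the symmetric argument applied to $Z$) to identify $Z$ with $X$ and conclude essential equality. The only point worth noting is that the paper executes your ``matching'' step concretely: since all charts have values in $\Omega$, the chart $(W,\delta,C)\in\mathcal{C}_{Z}$ and the chart of $\mathcal{C}_{X}$ supplied by quasi-galois closedness share the same ring $C\subseteq\Omega$ and hence are literally the same set $Spec(C)$, which gives $Z\subseteq X$ and, by symmetry, $Z=X$.
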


\begin{proof}
Assume that $X$ has a unique maximal reduced affine covering $\mathcal{C}_{X}$ with values in $\Omega $ that is
quasi-galois closed over $Y$ by $f$.

Fixed any conjugate $Z$ of $X$ over $Y$ and any isomorphism $\sigma:Z \rightarrow X$  over $Y$. Suppose that $Z$ has a reduced affine
covering $\mathcal{C}_{Z}$ with values in $\Omega$.

Take a local chart $(W,\delta,C)\in \mathcal{C}_{Z}$. Put
$$
U=\sigma (W);
\,
A=\mathcal{O}_{X}(U);
\,
C=\mathcal{O}_{Z}(W).
$$
We have
\begin{equation*}
U=Spec(A); \, W=Spec(C); \, A\subseteq\Omega; \, C\subseteq\Omega.
\end{equation*}

It is seen that there exists
 an affine open subset $U^{\prime}$ in $X$ such that
\begin{equation*}
C=\mathcal{O}_{X}(U^{\prime})
\end{equation*}
by the assumption that $\mathcal{C}_{X}$ is quasi-galois closed over $Y$.
As
\begin{equation*}
U^{\prime}=Spec(C)=W,
\end{equation*}
we have
\begin{equation*}
W=\sigma^{-1}(U)=U^{\prime}\subseteq X.
\end{equation*}

This proves $Z\subseteq X.$

On the other hand, all such local charts $(W,\delta,C)$ with $Spec(C)=W$ constitute a reduced affine covering $\mathcal{C}^{\prime}_{Z}$ of $Z$ with values in $\Omega $ that is unique, maximal, and
quasi-galois closed over $Y$ by $f\circ \delta$.

In a similar manner, we have $X\subseteq Z.$

Hence, $Z=X.$
It is seen that  $(X,\mathcal{O}_{X})$ and $(Z,\mathcal{O}_{Z})$ are essentially equal.
\end{proof}

\begin{lemma}
\emph{(c.f. \cite{An8})}
Let $f :X\rightarrow Y$ be a surjective morphism.
Then $X$ is
qc over $Y$ if there is a unique maximal affine patching $\mathcal{C}_{X}$ of $X$ with values in $\Omega $ satisfying the condition:
\begin{quotation}
$A_{\alpha }$ has one and only one conjugate over $B_{\alpha}$ for any  $
(U_{\alpha },\phi _{\alpha };A_{\alpha })\in \mathcal{C}_{X}$ and for any
affine open set $V_{\alpha}$ in $Y$ with $U_{\alpha }\subseteq
f^{-1}(V_{\alpha})$, where $B_{\alpha}$ is the canonical image of $
\mathcal{O}_{ Y}(V_{\alpha})$ in $k(X)$.
\end{quotation}
\end{lemma}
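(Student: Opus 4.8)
The plan is to deduce this lemma directly from \emph{Lemma 5.4} by showing that the ``one and only one conjugate'' hypothesis is exactly what forces the given affine patching to be quasi-galois closed over $Y$ in the sense of \emph{Definition 3.7}.

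First I would observe that the affine patching $\mathcal{C}_{X}$ is in particular a reduced affine covering of $X$ with values in $\Omega$: by \emph{Definition 3.5} an affine patching satisfies $U_{\alpha}=Spec(A_{\alpha})$ with $\phi_{\alpha}$ the identity map on the underlying space, so it is a reduced affine covering in normalized form. Under the convention following \emph{Definition 3.4}, in which the chart maps $\phi_{\alpha}$ are neglected, every reduced affine covering of $X$ with values in $\Omega$ is identified with such a patching; hence the unique maximal affine patching of the hypothesis coincides with a unique maximal reduced affine covering of $X$ with values in $\Omega$. This places us in the setting of \emph{Lemma 5.4}, provided the quasi-galois closed condition can be verified.

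The central step is to check that $\mathcal{C}_{X}$ is quasi-galois closed over $Y$ by $f$. Fix an affine open set $V_{\alpha}$ in $Y$, a local chart $(U_{\alpha},\phi_{\alpha};A_{\alpha})\in \mathcal{C}_{X}$ with $U_{\alpha}\subseteq f^{-1}(V_{\alpha})$, and any conjugate $A_{\alpha}^{\prime}$ of $A_{\alpha}$ over $B_{\alpha}$, where $B_{\alpha}$ is the canonical image of $\mathcal{O}_{Y}(V_{\alpha})$ in $k(X)$. Since $A_{\alpha}$ is trivially a conjugate of itself over $B_{\alpha}$ via the identity $F$-isomorphism of \emph{Definition 3.3}, the hypothesis that $A_{\alpha}$ has one and only one conjugate over $B_{\alpha}$ forces $A_{\alpha}^{\prime}=A_{\alpha}$ as subrings of $\Omega$. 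Consequently the chart $(U_{\alpha},\phi_{\alpha};A_{\alpha})$ itself plays the role of the local chart $(U_{\alpha}^{\prime},\phi_{\alpha}^{\prime};A_{\alpha}^{\prime})$ required by \emph{Definition 3.7}, with $U_{\alpha}^{\prime}=U_{\alpha}\subseteq f^{-1}(V_{\alpha})$. Thus $\mathcal{C}_{X}$ is quasi-galois closed over $Y$ by $f$, and \emph{Lemma 5.4} then yields that $X$ is \emph{qc} over $Y$ by $f$.

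The point requiring most care is the identification ``the unique conjugate equals $A_{\alpha}$ as a subring of $\Omega$''. One must confirm that the abstract $F$-isomorphism notion of conjugation in \emph{Definition 3.3} really produces $A_{\alpha}$ itself among the conjugates, and that counting conjugates up to $F$-isomorphism agrees with counting them as literal subrings of the fixed algebraically closed field $\Omega$; this agreement is precisely what the values-in-$\Omega$ framework and the essentially-equal conventions are designed to guarantee. I would also read the $\varphi^{-1}(V_{\alpha})$ appearing in \emph{Definition 3.7} as $f^{-1}(V_{\alpha})$, after which the verification above matches the definition verbatim.
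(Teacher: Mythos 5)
Your proposal is correct and follows exactly the route the paper intends: the paper's own proof is the single line ``It is immediate from \emph{Lemma 5.4},'' and your argument supplies precisely the missing bridge, namely that the one-and-only-one-conjugate hypothesis forces every conjugate $A_{\alpha}'$ to equal $A_{\alpha}$ itself, so the patching trivially satisfies \emph{Definition 3.7} and \emph{Lemma 5.4} applies. Your flagged care points (reading $\varphi^{-1}(V_{\alpha})$ as $f^{-1}(V_{\alpha})$, and identifying conjugates as literal subrings of $\Omega$, with the patching reduced because identity charts with equal underlying sets coincide) are exactly the conventions the paper relies on implicitly.
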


\begin{proof}
It is immediate from \emph{Lemma 5.4}.
\end{proof}

\subsection{Equivalent condition}

Now we give an essential and sufficient condition for \emph{qc} schemes.

\begin{theorem}
Let $f :X\rightarrow Y$ be a surjective morphism of schemes.
The following statements are equivalent:
\begin{itemize}
\item The scheme $X$ is
qc over $Y$ by $f$.

\item There is a unique maximal affine
covering $\mathcal{C}_{X}$ of $X$ with values in $\Omega $ such that $\mathcal{C}_{X}$ is quasi-galois closed over $Y$ by $f$.

\item There is a unique maximal affine
patching $\mathcal{C}_{X}$ of $X$ with values in $\Omega $ such that $\mathcal{C}_{X}$ is quasi-galois closed over $Y$ by $f$.
\end{itemize}
\end{theorem}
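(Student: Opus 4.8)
The plan is to establish the two equivalences $(i)\Longleftrightarrow(ii)$ and $(ii)\Longleftrightarrow(iii)$ directly from the lemmas of this section, after first recording that over the fixed algebraic closure $\Omega$ of $k(X)$ the notions of affine covering and affine patching coincide. Label the three conditions $(i)$ ($X$ is \emph{qc} over $Y$ by $f$), $(ii)$ (there is a unique maximal affine covering $\mathcal{C}_{X}$ of $X$ with values in $\Omega$ that is quasi-galois closed over $Y$ by $f$), and $(iii)$ (the same with ``covering'' replaced by ``patching'').

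First I would settle $(i)\Longleftrightarrow(ii)$. The implication $(i)\Longrightarrow(ii)$ is exactly \emph{Lemma 5.1}, which produces, for a \emph{qc} scheme, a unique maximal affine covering with values in an algebraic closure of $k(X)$ and asserts that this covering is quasi-galois closed over $Y$ by $f$; since $\Omega$ has been fixed to be such an algebraic closure, this is precisely $(ii)$. For the reverse implication $(ii)\Longrightarrow(i)$ I would apply \emph{Lemma 5.4}. The only point to check is that the covering in $(ii)$ is \emph{reduced}, so that the hypothesis of \emph{Lemma 5.4} is met verbatim; but this is automatic, since by \emph{Definition 3.7} the property of being quasi-galois closed is only defined for reduced affine coverings.

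Next I would prove $(ii)\Longleftrightarrow(iii)$, and here the bookkeeping set up in \S \emph{3} does the work. By \emph{Definition 3.4} an affine covering with values in $\Omega$ already satisfies $U_{\alpha}=Spec(A_{\alpha})$ and $\mathcal{O}_{X}(U_{\alpha})=A_{\alpha}$ for every local chart, while by the convention of \S \emph{3.3} the maps $\phi_{\alpha}$ are neglected for the charts of a reduced affine covering with values in a field. Thus every affine covering of $X$ with values in $\Omega$ is already an affine patching in the sense of \emph{Definition 3.3}, and conversely; the two families of objects quantified over in $(ii)$ and $(iii)$ therefore coincide, and uniqueness, maximality, and quasi-galois closedness are transported between them without change. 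In particular $(iii)$ enters the chain through $(iii)\Longrightarrow(ii)\Longrightarrow(i)$, closing the equivalence.

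The step I expect to require the most care is conceptual rather than computational: keeping the closedness hypothesis of $(iii)$ cleanly separated from the formally stronger hypothesis of \emph{Lemma 5.5}. That lemma assumes each $A_{\alpha}$ admits \emph{one and only one} conjugate over $B_{\alpha}$, whereas quasi-galois closedness in the sense of \emph{Definition 3.7} only requires that \emph{all} conjugates of $A_{\alpha}$ over $B_{\alpha}$ already appear as charts of $\mathcal{C}_{X}$; the two differ as soon as some $A_{\alpha}$ has several genuine conjugates. For this reason I would deliberately route $(iii)\Longrightarrow(i)$ through $(ii)$ and \emph{Lemma 5.4} rather than through \emph{Lemma 5.5}, treating \emph{Lemma 5.5} only as an independent sufficient criterion. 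The remaining verifications --- that the partial order defining ``maximal'' is the same for coverings and for patchings, and that $\Omega$ may be taken as the fixed algebraic closure throughout, by \emph{Remark 5.2} --- are routine given the definitions.
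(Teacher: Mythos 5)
Your proposal is correct in substance and, on the main leg, identical to the paper's: the paper proves Theorem 5.6 by citing exactly Lemma 5.1 for $(i)\Longrightarrow(ii)$ and Lemma 5.4 for $(ii)\Longrightarrow(i)$, as you do, and your observation that reducedness is built into the definition of quasi-galois closedness (so Lemma 5.4 applies verbatim) is a point the paper leaves implicit. Where you diverge is the third condition: the paper routes $(iii)$ through Lemma 5.5 together with Lemma 5.7, whose whole purpose is to convert an affine covering with values in $\Omega$ into an affine patching \emph{canonically} --- each chart $(U_{\alpha},\phi_{\alpha};A_{\alpha})$ is replaced by $(U_{\alpha},id_{\alpha};\sigma_{\alpha}(A_{\alpha}))$, with $\sigma_{\alpha}$ induced by $\phi_{\alpha}^{-1}$ --- whereas you assert that the two classes of objects literally coincide. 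That assertion overshoots: a covering with values in $\Omega$ satisfies $U_{\alpha}=Spec(A_{\alpha})$ and $\mathcal{O}_{X}(U_{\alpha})=A_{\alpha}$, but an affine patching additionally requires $\phi_{\alpha}$ to be the identity map on $U_{\alpha}$, and nothing in the ``with values in $\Omega$'' condition forces this; the convention of \S 3.3 only governs when two charts are counted as equal. So the honest statement is not equality of the two families but the canonical bijection of Lemma 5.7, under which uniqueness, maximality and quasi-galois closedness transfer --- which is exactly how the paper closes the loop, and your argument is repaired by substituting that lemma for your identification claim. Your deliberate avoidance of Lemma 5.5 is defensible and arguably cleaner: its hypothesis (each $A_{\alpha}$ has \emph{one and only one} conjugate over $B_{\alpha}$) is indeed not the same condition as quasi-galois closedness of the patching, so routing $(iii)\Longrightarrow(ii)\Longrightarrow(i)$ avoids a mismatch the paper glosses over. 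Finally, your citations are systematically off by one: quasi-galois closedness of coverings is Definition 3.8 (not 3.7), ``with values in'' is Definition 3.5 (not 3.4), and affine patching is Definition 3.4 (not 3.3).
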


\begin{proof}
It is immediate from \emph{Lemmas 5.1,5.4-5} above and \emph{Lemma 5.7} below.
\end{proof}

We also need the following lemma to prove the third statement in \emph{Theorem 5.6}.

\begin{lemma}
Let $Z$ be an integral scheme and let $\mathcal{C}_{X}=\{(U_{\alpha },\phi _{\alpha };A_{\alpha
})\}$ be an affine covering of $X$ with values in $\Omega $. Then $\mathcal{C}_{X}$ induces an affine patching $\mathcal{D}_{X}=\{(U_{\alpha },id_{\alpha };\sigma_{\alpha}(A_{\alpha}))\}$ of $X$ with values in $\Omega $ given in a natural manner: $$(U_{\alpha },\phi _{\alpha };A_{\alpha
})\in \mathcal{C}_{X}\mapsto (U_{\alpha },id_{\alpha };\sigma_{\alpha}(A_{\alpha})) \in \mathcal{D}_{X}$$
where $$\sigma_{\alpha}:A_{\alpha}\to A_{\alpha}$$ is a ring isomorphism induced from the homeomorphism $$\phi ^{-1}_{\alpha }:Spec(A_{\alpha})\to U_{\alpha}.$$
\end{lemma}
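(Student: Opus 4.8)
The plan is to use the anti-equivalence between affine schemes and commutative rings to convert each chart isomorphism $\phi_{\alpha}$ into a ring automorphism, and then to absorb that automorphism into a relabelling of the coordinate ring so that the structure map becomes the identity on the underlying space. First I would unwind the hypothesis that $\mathcal{C}_{X}$ has values in $\Omega$: by \emph{Definition 3.3} we already have $U_{\alpha}=Spec(A_{\alpha})$ as underlying spaces and $\mathcal{O}_{X}(U_{\alpha})=A_{\alpha}\subseteq \Omega$ for each $\alpha$. Consequently the scheme isomorphism $\phi_{\alpha}:(U_{\alpha},\mathcal{O}_{X}|_{U_{\alpha}})\to (Spec(A_{\alpha}),\mathcal{O}_{Spec(A_{\alpha})})$ is in fact a scheme automorphism of $Spec(A_{\alpha})$, whose underlying map of spaces is the self-homeomorphism $\phi_{\alpha}^{-1}$ of $Spec(A_{\alpha})$.

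Next I would produce the ring isomorphism $\sigma_{\alpha}$. As $Spec$ is fully faithful as a contravariant functor on affine schemes, the scheme automorphism underlying $\phi_{\alpha}^{-1}$ is induced by a unique ring isomorphism $\sigma_{\alpha}:A_{\alpha}\to A_{\alpha}$ on global sections, namely the pullback of sections along $\phi_{\alpha}^{-1}$. This is precisely the $\sigma_{\alpha}$ named in the statement; since it is an automorphism we have $\sigma_{\alpha}(A_{\alpha})=A_{\alpha}$ as subrings of $\Omega$, so the content of the construction lies not in changing the ring but in changing the way $U_{\alpha}$ is identified with its spectrum.

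I would then take $\mathcal{D}_{X}=\{(U_{\alpha},id_{U_{\alpha}};\sigma_{\alpha}(A_{\alpha}))\}$ as the candidate and verify the requirements in order. For the affine-covering condition it suffices to check that $id_{U_{\alpha}}:(U_{\alpha},\mathcal{O}_{X}|_{U_{\alpha}})\to (Spec(\sigma_{\alpha}(A_{\alpha})),\mathcal{O}_{Spec(\sigma_{\alpha}(A_{\alpha}))})$ is a scheme isomorphism: on points it is literally the identity, while on structure sheaves the twist by $\sigma_{\alpha}$ exactly compensates for the failure of $\phi_{\alpha}$ to be the identity, so the induced sheaf map is an isomorphism. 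The affine-patching condition of \emph{Definition 3.4} is then immediate, since $U_{\alpha}=Spec(\sigma_{\alpha}(A_{\alpha}))$ and the structure map is $id_{U_{\alpha}}$; and the ``values in $\Omega$'' condition of \emph{Definition 3.3} holds because $\mathcal{O}_{X}(U_{\alpha})=A_{\alpha}=\sigma_{\alpha}(A_{\alpha})\subseteq \Omega$. Because the underlying spaces $U_{\alpha}$ and the structure sheaf $\mathcal{O}_{X}$ are never altered, the $U_{\alpha}$ still cover $X$ and the transition data on overlaps is inherited verbatim from $\mathcal{C}_{X}$, so the assignment $(U_{\alpha},\phi_{\alpha};A_{\alpha})\mapsto (U_{\alpha},id_{U_{\alpha}};\sigma_{\alpha}(A_{\alpha}))$ is the asserted natural construction.

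The step I expect to be the main obstacle is the single verification that replacing $\phi_{\alpha}$ by $id_{U_{\alpha}}$ together with the twist $\sigma_{\alpha}$ really produces an isomorphism of locally ringed spaces --- that is, correctly tracking the contravariance so that $\sigma_{\alpha}$, rather than $\sigma_{\alpha}^{-1}$, is the ring map rendering the identity on points an isomorphism. Everything else, including the overlap compatibility and the naturality of the assignment, is formal once the underlying topology and $\mathcal{O}_{X}$ are held fixed.
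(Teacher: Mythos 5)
Your proposal is correct and takes essentially the same route as the paper: the paper's entire proof of this lemma reads ``It is immediate from \emph{Definitions 3.4-5}'', and your argument is precisely the careful unwinding of those two definitions --- using the anti-equivalence between affine schemes and rings to turn $\phi_{\alpha}$ into the ring automorphism $\sigma_{\alpha}$, and then relabelling the chart so that the structure map becomes the identity on the underlying space. The only correction needed is a citation slip: the ``with values in $\Omega$'' condition you invoke is \emph{Definition 3.5} (while \emph{Definition 3.4} is the affine patching condition), not \emph{Definition 3.3}, which defines conjugations of rings.
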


\begin{proof}
It is immediate from \emph{Definitions 3.4-5}.
\end{proof}

\section{Universal Construction for \emph{qc} Schemes}

In this section we will give a universal construction for a \emph{qc} scheme over a given integral scheme. That is, we will prove the existence of a \emph{qc} cover of an integral scheme.

\subsection{A preliminary lemma}

Let $X$ be an integral scheme that is not essentially affine in $k(X)$. By the lemma below we can change $X$ into a scheme $Z$ that is essentially affine in $k(Z)$. That is, any integral scheme is isomorphic to an essentially affine scheme.

\begin{lemma}
For any integral scheme $X$,
there is an integral scheme $Z$ satisfying the properties:

\begin{itemize}
\item $k\left( X\right) =k\left( Z\right);$

\item $X\cong Z$ are isomorphic schemes;

\item $Z$ is essentially affine in the field $k(Z)^{al}$.
\end{itemize}
\end{lemma}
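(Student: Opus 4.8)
The plan is to realize $X$ concretely by relabelling each of its points with the corresponding local subring of the function field, so that every affine chart literally becomes the spectrum of a subring of $k(X)^{al}$. Write $K=k(X)$, let $\xi$ be the generic point, and fix $\Omega=K^{al}$. Since $X$ is integral, for every affine open $U=\mathrm{Spec}(A)$ the restriction-to-generic-point map $A=\mathcal{O}_X(U)\hookrightarrow\mathcal{O}_{X,\xi}=K$ is injective, so each structure ring $A$ is canonically a subring of $K$, hence of $\Omega$. This already exhibits $X$ as covered by spectra of subrings of $\Omega$; what is missing for the definition of \emph{essentially affine} (\emph{Definition 3.5} and the ensuing paragraph) is that the underlying open set $U_{\alpha}$ be literally equal to $\mathrm{Spec}(A_{\alpha})$, not merely homeomorphic to it.

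First I would define the rigidifying map $\lambda:X\to\{\text{local subrings of }K\}$ by $x\mapsto\mathcal{O}_{X,x}$. On a single affine chart $U=\mathrm{Spec}(A)$ this is $\mathfrak{p}\mapsto A_{\mathfrak{p}}$, and it is injective there because a local subring $A_{\mathfrak{p}}\subseteq K$ recovers its prime via $\mathfrak{p}=\{a\in A:a\notin A_{\mathfrak{p}}^{\times}\}$; thus $\lambda$ restricts to a bijection $U\xrightarrow{\sim}\{A_{\mathfrak{p}}\}$ that identifies $U$ set-theoretically with $\mathrm{Spec}(A)$. Then I would let $Z$ be the locally ringed space whose underlying set is $\lambda(X)$, with topology and structure sheaf transported along $\lambda$. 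Granting that $\lambda$ is globally injective (the point addressed below), the tautological map $X\to Z$ is an isomorphism of schemes, so $Z$ is integral with $k(Z)=\mathcal{O}_{Z,\zeta}=K=k(X)$. The images $\lambda(U_i)$ of an affine cover of $X$ then form an affine covering of $Z$ with $\lambda(U_i)=\mathrm{Spec}(A_i)$ and $A_i\subseteq K\subseteq\Omega=k(Z)^{al}$; discarding repeated charts (those sharing the same $A_i$, hence the same point set) makes it reduced. This exhibits $Z$ as essentially affine in $k(Z)^{al}$ and yields all three asserted properties.

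The hard part will be justifying that $\lambda$ is injective, equivalently that distinct points of $X$ carry distinct local subrings of $K$; this is exactly what guarantees that relabelling by local rings does not collapse $X$ and that the glued spectra reassemble to a space homeomorphic to $X$. For separated integral schemes --- in particular the algebraic $K$-varieties that the paper ultimately cares about --- this holds, since the closedness of the diagonal forces two points with a common local ring to coincide. For a fully general integral scheme the statement can fail (the line with doubled origin has two closed points sharing the local ring $k[t]_{(t)}\subseteq k(t)$), and I would handle this either by invoking separatedness as a standing hypothesis on the schemes under consideration, or by appealing to the paper's convention of neglecting non-affine opens and working up to essential equality (\emph{Definitions 3.6--3.7}), under which such identifications are absorbed. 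I expect the verification of global injectivity, together with the attendant check that the transported topology on $\lambda(X)$ agrees with the quotient-of-charts topology, to be the only genuinely delicate step; the remaining verifications are the routine bookkeeping of transporting a sheaf along a bijection.
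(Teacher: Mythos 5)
Your construction has a genuine gap at exactly the point you flag, and neither of your proposed patches closes it. The lemma is asserted for an \emph{arbitrary} integral scheme $X$: no separatedness is assumed anywhere (the paper's ``algebraic $K$-variety'' is just an integral scheme of finite type over $K$, and the lemma is invoked for completely general integral schemes in the proofs of \emph{Lemma 7.3}, \emph{Theorem 8.7} and \emph{Lemma 10.1}, where the schemes involved are huge glued objects not known to be separated). On your own example, the line with doubled origin, the map $\lambda:x\mapsto\mathcal{O}_{X,x}$ identifies the two origins, so the resulting $Z$ is the affine line and the required conclusion $X\cong Z$ fails outright. This failure cannot be ``absorbed by essential equality'': \emph{Definitions 3.6--3.7} require essentially equal schemes to have literally the same underlying space, so a relabelling that collapses points is not an essential equality, and it changes the isomorphism type. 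Adding separatedness as a standing hypothesis proves a strictly weaker statement than the one asserted. So as written, your argument establishes the lemma only for those schemes on which $\lambda$ is injective.

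The paper's own proof avoids this issue because it never relabels \emph{points} by their local rings; it relabels \emph{charts} by their rings of sections. For each local chart $(U_{\alpha},\phi_{\alpha};A_{\alpha})$ it forms the canonical image $B_{\alpha}=i_{U_{\alpha}}\circ\phi^{\sharp}_{\alpha}(A_{\alpha})\subseteq k(X)$, transports the chart along the induced isomorphism $\tau_{\alpha}:\mathrm{Spec}(A_{\alpha})\to\mathrm{Spec}(B_{\alpha})$, and then reglues the schemes $\mathrm{Spec}(B_{\alpha})$ along $\psi_{\alpha}=\tau_{\alpha}\circ\phi_{\alpha}$, i.e.\ along the transition data inherited from $X$ itself. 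Since the identifications of points come from the original gluing of $X$ and not from equality of local subrings of $k(X)$, distinct points are never merged, even when they have equal local rings; the result is tautologically a scheme $Z$ isomorphic to $X$ with $k(Z)=k(X)$ and with every chart the spectrum of a subring of $k(X)\subseteq k(X)^{al}$, for every integral scheme, separated or not. If you want to salvage your approach, replace ``the set of local rings'' by ``the original underlying space of $X$, with each chart's sheaf transported along $t_{\alpha}:A_{\alpha}\to B_{\alpha}$'' --- which is, in effect, what the paper does; the local-ring bookkeeping is then unnecessary, and with it disappears the delicate injectivity question you correctly identified as the obstruction.
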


\begin{proof}
Let $\Omega=k(X)^{al}$ and let $\xi$ be the generic point of $X$. We have $k\left(X\right)\triangleq\mathcal{O}_{X,\xi }$.
For any open set $U$ of $X$, there is
the following canonical embedding
$$i_{U}:\mathcal{O}_{X}(U)\rightarrow k(X).$$

Now take an affine covering $\mathcal{C}_{X}=\{(U_{\alpha },\phi _{\alpha };A_{\alpha
})\}_{\alpha \in \Delta }$ of $X$.  Fixed an $\alpha \in \Delta$. We have ring isomorphisms
$$\phi ^{\sharp}_{\alpha }:A_{\alpha}\to \mathcal{O}_{X}(U_{\alpha});$$
$$i_{U_{\alpha}}:\mathcal{O}_{X}(U_{\alpha})\to B_{\alpha}\subseteq k(X).$$

The isomorphism $$t_{\alpha}\triangleq i_{U_{\alpha}}\circ \phi ^{\sharp}_{\alpha }:A_{\alpha}\to B_{\alpha}$$ between rings induces an isomorphism $$\tau_{\alpha}:(Spec{( A_{\alpha })},\mathcal{O}_{Spec{( A_{\alpha }})})\to (Spec{( B_{\alpha })},\mathcal{O}_{Spec{( B_{\alpha }})})$$ between schemes.

Then we  obtain a new scheme $(X,\mathcal{O}^{\prime}_{X})$, namely $Z$, by gluing these schemes $(U_{\alpha},{\mathcal{O}_{X}|}_{U_{\alpha}})$ along $$\psi _{\alpha }\triangleq \tau _{\alpha }\circ \phi _{\alpha }$$ for $\alpha \in \Delta$.

It is seen that $Z$ has the desired properties.
\end{proof}

\subsection{A universal construction for \emph{qc} covers}

Fixed a field $K$ (not necessarily of characteristic zero). Let $Y$ be an integral $K$-variety.  Suppose that $Y$ is essentially affine in an algebraic closure of $M\triangleq k(Y)$.

Fixed an extension $L$ of $M$ such that $L$ is Galois over $M$. Note that here $L$ is not necessarily finitely generated over $M$.

\begin{lemma}
There exists an integral $K$-variety $X$ and
a surjective morphism $f:X\rightarrow Y$ such that

\begin{itemize}
\item $L=k\left( X\right) $;

\item $f$ is affine;

\item $X$ is qc over $Y$ by $f$;

\item $X$ is essentially affine in $L^{al}$.
\end{itemize}
\end{lemma}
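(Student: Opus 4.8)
The plan is to construct $X$ one chart at a time over the essentially affine structure of $Y$, keeping everything inside the single ambient field $\Omega=L^{al}$, and then to certify quasi-galois closedness through the criterion of \emph{Theorem 5.6} (equivalently \emph{Lemma 5.5}). First, by \emph{Lemma 6.1} I may assume $Y$ is essentially affine in $M^{al}\subseteq\Omega$, so I fix a reduced affine covering $\mathcal{C}_{Y}=\{(V_{\alpha},\psi_{\alpha};B_{\alpha})\}_{\alpha\in\Delta_{0}}$ of $Y$ with values in $M^{al}$; here each $B_{\alpha}\subseteq M^{al}$ is an integral domain with $Fr(B_{\alpha})=M$ and $V_{\alpha}=Spec(B_{\alpha})$. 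Since $L$ is Galois over $M$, I read off from \emph{Theorem 4.1} (applied to its finitely generated Galois subextensions and passed to the limit, as $L$ need not be finitely generated) that $L$ is both quasi-galois and separably generated over $M$; I then fix a separating transcendence basis $\Delta$ of $L$ over $M$ inside $\Omega$.

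Next I build the charts. For each $\alpha$ I set $A_{\alpha}$ to be the integral closure in $L$ of the polynomial ring $B_{\alpha}[\Delta]$, viewed as a subring of $\Omega$. Because $\Delta$ is a separating transcendence basis, $L$ is separable and algebraic over $M(\Delta)=Fr(B_{\alpha}[\Delta])$, so $Fr(A_{\alpha})=L$, while $A_{\alpha}\supseteq B_{\alpha}$ yields a morphism $f_{\alpha}\colon U_{\alpha}=Spec(A_{\alpha})\to V_{\alpha}=Spec(B_{\alpha})$ that is affine and surjective (the map $Spec(B_{\alpha}[\Delta])\to Spec(B_{\alpha})$ is surjective and $A_{\alpha}$ is integral over $B_{\alpha}[\Delta]$, so lying-over applies).

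I then glue the affine pieces. On an overlap $V_{\alpha}\cap V_{\beta}$ the rings $B_{\alpha},B_{\beta}$ determine the same open subscheme of $Y$ and agree after the matching localization (both have fraction field $M$), so the integral closures $A_{\alpha},A_{\beta}$, formed inside the \emph{fixed} field $L\subseteq\Omega$, agree after that same localization; this produces gluing isomorphisms over $f_{\alpha},f_{\beta}$ and the cocycle condition on triple overlaps holds automatically because all data live in $L$. I obtain an integral $K$-variety $X$ with reduced affine covering $\mathcal{C}_{X}=\{(U_{\alpha},id;A_{\alpha})\}$ with values in $\Omega$ and a surjective affine morphism $f\colon X\to Y$ with $f^{-1}(V_{\alpha})=U_{\alpha}$. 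Since every generic stalk is $Fr(A_{\alpha})=L$, we get $k(X)=L$; and $X$ is essentially affine in $\Omega=L^{al}$ by construction, with $f$ affine since each $f^{-1}(V_{\alpha})=U_{\alpha}$ is affine.

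Finally I verify that $X$ is \emph{qc} over $Y$ by $f$. Passing to the associated affine patching via \emph{Lemma 5.7}, I apply \emph{Lemma 5.5}: it suffices that for each $\alpha$ and each affine open $V_{\alpha}$ of $Y$ with $U_{\alpha}\subseteq f^{-1}(V_{\alpha})$, the ring $A_{\alpha}$ has one and only one conjugate over $B_{\alpha}$, with $\mathcal{C}_{X}$ the unique maximal such patching. This is exactly where the Galois hypothesis enters: any conjugate $A_{\alpha}'$ of $A_{\alpha}$ over $B_{\alpha}$ inside $\Omega$ has $Fr(A_{\alpha}')$ a conjugation of $L=Fr(A_{\alpha})$ over $M$, and since $L$ is quasi-galois over $M$, \emph{Lemma 4.3} forces $L$ to admit exactly one conjugation over $M$, namely $L$ itself; hence $A_{\alpha}'$ and $A_{\alpha}$ share the fraction field $L$ and coincide as the (canonical, hence conjugation-stable) integral closure of $B_{\alpha}[\Delta]$ in $L$. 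I expect the main obstacle to be precisely the \emph{maximality and uniqueness} of this patching together with the consistency of the gluing: one must arrange a single separating transcendence basis so that $Fr(A_{\alpha})=L$ for all $\alpha$ at once and the $A_{\alpha}$ patch coherently inside $L$, and then rule out adjoining any further affine chart with ring in $\Omega$ without destroying quasi-galois closedness. This last uniqueness step mirrors the argument in \emph{Lemma 5.1}, and once it is in place the \emph{qc} property follows from \emph{Theorem 5.6}.
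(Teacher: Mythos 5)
Your construction diverges from the paper's at one decisive point, and that is exactly where it breaks. The paper's proof (the ``Universal Construction for \emph{qc} Covers'') does not take the chart ring to be the integral closure of $B_{\alpha}[\Delta]$ in $L$; it takes $A_{V}=B_{V}[\Delta_{V}]$, where $\Delta_{V}=\{\sigma(w):\sigma\in Gal(L/M),\ w\in\Delta\}$ is the \emph{full Galois orbit} of a generating set $\Delta$ (an arbitrary transcendence basis together with a linear basis of $L$ over $M(\Delta_{1})$). This saturation is what makes each chart ring stable under every conjugation over $B_{V}$: a conjugation fixes $M\supseteq B_{V}$ pointwise and permutes $\Delta_{V}$, hence maps $A_{V}$ onto itself, so $A_{V}$ has one and only one conjugate and Lemma 5.5 applies. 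Your ring $A_{\alpha}$, the integral closure of $B_{\alpha}[\Delta]$ in $L$, is \emph{not} conjugation-stable: a conjugation $\tau$ over $B_{\alpha}$ in the sense of Definition 3.3 fixes $M$ and some transcendence basis of its own choosing, not your chosen $\Delta$, and it carries $A_{\alpha}$ onto the integral closure of $B_{\alpha}[\tau(\Delta)]$ in $L$, which is in general a different subring of $L$. Quasi-galoisness of $L$ over $M$ guarantees that conjugations of the \emph{field} $L$ land back in $L$; it says nothing about conjugates of the \emph{subring} $A_{\alpha}$ landing back in $A_{\alpha}$. Your parenthetical ``(canonical, hence conjugation-stable)'' is precisely the gap.

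A concrete counterexample: take $Y=Spec(\mathbb{Q})$, $L=\mathbb{Q}(t)$ (which the paper notes is Galois over $\mathbb{Q}$), $\Delta=\{t\}$. Your $A$ is the integral closure of $\mathbb{Q}[t]$ in $\mathbb{Q}(t)$, which is $\mathbb{Q}[t]$ itself, so your $X=Spec(\mathbb{Q}[t])$. But with $F=\mathbb{Q}(t+\frac{1}{t})$, the nontrivial $F$-automorphism of $\mathbb{Q}(t)$ swaps $t\leftrightarrow\frac{1}{t}$ and exhibits $\mathbb{Q}[\frac{1}{t}]$ as a conjugate of $\mathbb{Q}[t]$ over $B_{\alpha}=\mathbb{Q}$ in the sense of Definition 3.3; yet $\mathbb{Q}[\frac{1}{t}]$ is not the coordinate ring of any affine open subset of $Spec(\mathbb{Q}[t])$. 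So the criterion of Lemma 5.5 fails, Definition 3.8 fails for every reduced affine covering of $X$, and your $X$ is simply not \emph{qc} over $Y$ --- the conclusion you were supposed to prove is false for the scheme you built. (By contrast the paper's ring is $\mathbb{Q}[\Delta_{V}]$ with $\Delta_{V}$ the whole $Gal(\mathbb{Q}(t)/\mathbb{Q})$-orbit of $t$, a much larger, Galois-stable ring.) A secondary gap: you need a \emph{separating} transcendence basis of $L$ over $M$, justified by ``passing Theorem 4.1 to the limit,'' but Theorem 4.1 is stated only for finitely generated extensions, and separable generation does not pass to unions (a union of separably generated subextensions need not be separably generated). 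The paper sidesteps this entirely by never requiring $\Delta$ to be separating.
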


\begin{proof}
(\textbf{Universal Construction for \emph{qc} Covers}) Here, we repeat this construction developed in \cite{An3}. We will  proceed in several steps.

\emph{\textbf{Step 1.}} Take algebraic closures $\Omega _{M}$ of $M$ and $\Omega _{L}$ of $L$, respectively. Suppose $\Omega _{M}\subseteq \Omega _{L}$.

Let $\Delta_{1}$ be a transcendental basis of $L$ over $M$ and let $\Delta_{2}$ be a linear basis of $L$ as a vector space over $M(\Delta_{1})$. Put $$\Delta=\Delta_{1}\bigcup \Delta_{2}.$$

Fixed a reduced affine coverings $\mathcal{C}_{Y}$ of $Y$ with values in $\Omega _{M}$ from the assumption that $Y$ is essentially affine in $\Omega _{M}$. Suppose that $\mathcal{C}_{Y}$ is maximal (by set inclusion).

\emph{\textbf{Step 2.}} Take a local chart $\left( V,\psi
_{V},B_{V}\right) \in \mathcal{C}_{Y}$. It is seen that $V$ is an affine open subset of
$Y $ and we have
$$
Fr\left( B_{V}\right) =M; \, \mathcal{O}_{Y}\left( V\right)
=B_{V}\subseteq \Omega _{M}.
$$

Define $$A_{V}\triangleq B_{V}\left[ \Delta _{V}\right],$$ i.e.,  the subring of $L$ generated over $B_{V}$ by the set
$$
\Delta _{V}\triangleq \{\sigma \left(w\right) \in L:\sigma \in
Gal\left( L/M\right) ,w\in \Delta \}.
$$

Then
$
Fr\left( A_{V}\right) =L
$
holds. It is seen that $B_{V}$ is
exactly the invariant subring of the natural action of the Galois group $Gal\left( L/M\right) $ on $A_{V}.$

Set
\begin{equation*}
i_{V}:B_{V}\rightarrow A_{V}
\end{equation*}
to be the inclusion.

\emph{\textbf{Step 3.}} Define the disjoint union
\begin{equation*}
\Sigma =\coprod\limits_{\left( V,\psi _{V},B_{V}\right) \in \mathcal{C}_{Y}}Spec\left( A_{V}\right) .
\end{equation*}

Then $\Sigma $ is a topological space, where the topology $\tau _{\Sigma }$ on
$\Sigma $ is naturally determined by the Zariski topologies on all $Spec\left( A_{V}\right) .$

Let
\begin{equation*}
\pi _{Y}:\Sigma \rightarrow Y
\end{equation*}
be the projection.

\emph{\textbf{Step 4.}} Given an equivalence relation $R_{\Sigma}$ in $\Sigma$ in such a manner:
\begin{quotation}
\emph{For any $x_{1},x_{2}\in \Sigma $, we say
\begin{equation*}
x_{1}\sim x_{2}
\end{equation*}
if and only if
\begin{equation*}
j_{x_{1}}=j_{x_{2}}
\end{equation*}
holds in $L$.}
\end{quotation}
Here, $j_{x}$ denotes the corresponding prime ideal of $A_{V}$ to a point $%
x\in Spec\left( A_{V}\right) $ (see \cite{EGA}).

Let
\begin{equation*}
X=\Sigma /\sim
\end{equation*}
and let
\begin{equation*}
\pi _{X}:\Sigma \rightarrow X
\end{equation*}
be the projection.

It is seen that $X$ is a topological space as a quotient of $\Sigma .$

\emph{\textbf{Step 5.}} Define a map
\begin{equation*}
f:X\rightarrow Y
\end{equation*}
by
\begin{equation*}
\pi _{X}\left( z\right) \longmapsto \pi _{Y}\left( z\right)
\end{equation*}
for each $z\in \Sigma $.

\emph{\textbf{Step 6.}} Define
$$
\mathcal{C} _{X}=\{\left( U_{V},\varphi _{V},A_{V}\right) \}_{\left( V,\psi
_{V},B_{V}\right) \in \mathcal{C}_{Y}}
$$
where  $U_{V}\triangleq\pi _{Y}^{-1}\left( V\right) $ is an open set in $X$ and $\varphi
_{V}:U_{V}\rightarrow Spec(A_{V})$ is the identity map on $U_{V}$ for each $\left(
V,\psi _{V},B_{V}\right) \in \mathcal{C}_{Y}$.

Then we have an integral scheme
$
\left( X,\mathcal{O}_{X}\right)
$
by gluing the affine schemes $Spec\left( A_{V}\right) $ for
all local charts $\left( V,\psi _{V},B_{V}\right) \in \mathcal{C}_{Y}$ with
respect to the equivalence relation $R_{\Sigma}$ (see \cite{EGA,Hrtsh}).

It is seen that $\mathcal{C} _{X}$ is an
affine patching on the scheme $X$ with values in $\Omega _{L}$.

In particular, $\mathcal{C} _{X}$ is maximal and quasi-galois closed over $Y$ by $f$.

By \emph{Theorem 5.6} it is seen that $X$ and $f$ have the desired property. This completes the proof.
\end{proof}

\subsection{Existence of \emph{qc} covers}

Now we give the existence of \emph{qc} covers.

\begin{theorem}
Fixed an integral $K$-variety $Y$ and a Galois extension $L$  over  $k(Y)$. Then
there exists an integral $K$-variety $X$ and a surjective morphism $f:X\rightarrow Y$ such that

\begin{itemize}
\item $L=k\left( X\right) $;

\item $f$ is affine;

\item $X$ is a qc over $Y$ by $f$;

\item $X$ is essentially affine in $L^{al}$.
\end{itemize}
\end{theorem}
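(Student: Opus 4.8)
The plan is to reduce the general case to \emph{Lemma 6.2}, whose sole extra hypothesis is that the base is essentially affine in an algebraic closure of its function field. Precisely that hypothesis is what \emph{Lemma 6.1} supplies, once we replace $Y$ by an isomorphic model.

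First I would apply \emph{Lemma 6.1} to the integral $K$-variety $Y$, obtaining an integral scheme $Y'$ and an isomorphism of schemes $g:Y\to Y'$ with $k(Y')=k(Y)$ and $Y'$ essentially affine in $k(Y')^{al}$. Writing $M\triangleq k(Y)=k(Y')$, the given extension $L$ is then Galois over $M=k(Y')$, and $Y'$ is essentially affine in an algebraic closure of $k(Y')$; so \emph{Lemma 6.2} applies to the pair $(Y',L)$ and yields an integral $K$-variety $X$ together with a surjective affine morphism $f':X\to Y'$ such that $L=k(X)$, $X$ is qc over $Y'$ by $f'$, and $X$ is essentially affine in $L^{al}$. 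I would then set $f\triangleq g^{-1}\circ f':X\to Y$. Being the composite of the surjective affine morphism $f'$ with the isomorphism $g^{-1}$, the map $f$ is again surjective and affine; and the properties $L=k(X)$ and essential affineness of $X$ in $L^{al}$ depend only on $X$ and persist unchanged.

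The single point demanding care is that $X$ is qc over $Y$ by $f$, and not merely over $Y'$ by $f'$. Here I would record that $g$ induces a bijection between the conjugates of $X$ over $Y$ and those over $Y'$: for a scheme $Z$ with structural map $f_{Z}:Z\to Y$, an isomorphism $\sigma:X\to Z$ satisfies $f_{Z}\circ\sigma=f$ if and only if $(g\circ f_{Z})\circ\sigma=g\circ f=f'$, so $\sigma$ is an isomorphism over $Y$ exactly when it is one over $Y'$. Consequently the conjugates of $X$ over the two bases are the same schemes. Since the quasi-galois closed condition of \emph{Definition 3.9} is formulated entirely through these conjugates and through the two requirements imposed on each of them, namely essential equality with $X$ and containment of reduced affine coverings with values in $\Omega$, neither of which refers to the base morphism, the very witness $(\Omega,\mathcal{C}_{X})$ that makes $X$ qc over $Y'$ by $f'$ serves verbatim to make $X$ qc over $Y$ by $f$. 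Thus the defining property transfers across $g$, all four conditions hold for $f:X\to Y$, and the theorem follows at once from \emph{Lemmas 6.1} and \emph{6.2}. The obstacle is therefore conceptual rather than computational: it lies only in noting that being quasi-galois closed is invariant under replacing the base by an isomorphic scheme, which is immediate from the form of \emph{Definition 3.9} given the bijection of conjugates above.
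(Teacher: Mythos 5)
Your proposal is correct and follows exactly the paper's own route: the paper proves this theorem by citing \emph{Lemmas 6.1-2}, which is precisely your decomposition (replace $Y$ by an essentially affine model via \emph{Lemma 6.1}, apply the universal construction of \emph{Lemma 6.2}, transfer back along the isomorphism). Your extra verification that the qc property is invariant under replacing the base by an isomorphic scheme is a detail the paper leaves implicit, and it is argued correctly.
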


Such an integral $K$-variety $X$ with a morphism $f$, denoted by $(X,f)$, is said to be a \textbf{\emph{qc} cover} of $Y$.

\begin{proof}
It is immediate from \emph{Lemmas 6.1-2}.
\end{proof}

\begin{lemma}
Let $X$, $Y$ and $Z$ be integral $K$-varieties such that $X$ and $Z$ are {qc} over $Y$. Then $X$ and $Z$ are essentially equal if $k(X)=k(Z)$ and  $X$ and $Z$ are isomorphic.
\end{lemma}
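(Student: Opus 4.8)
The plan is to reduce the statement to the defining first condition of a quasi-galois closed scheme (Definition 3.9), using the uniqueness of the maximal affine covering supplied by Lemma 5.1. Throughout, let $f:X\to Y$ and $g:Z\to Y$ be the surjective morphisms witnessing that $X$ and $Z$ are \emph{qc} over $Y$, and let $\rho:X\to Z$ be the given isomorphism of schemes.

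First I would fix a common field in which to compare the two schemes. Since $k(X)=k(Z)$, the field $\Omega\triangleq k(X)^{al}=k(Z)^{al}$ is simultaneously an algebraic closure of both function fields. By \emph{Lemma 5.1}, $X$ carries a unique maximal affine covering $\mathcal{C}_{X}$ with values in $\Omega$ that is quasi-galois closed over $Y$ by $f$, and likewise $Z$ carries a unique maximal affine covering $\mathcal{C}_{Z}$ with values in $\Omega$ that is quasi-galois closed over $Y$ by $g$. By \emph{Remark 3.10}, both $X$ and $Z$ are essentially affine in $\Omega$; in particular each is essentially affine in the sense required by Definition 3.9.

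Next I would exploit the isomorphism $\rho$ to realize $Z$ as a conjugate of $X$ over $Y$. Because $f$ and $g$ are dominant they induce canonical embeddings $k(Y)\hookrightarrow k(X)$ and $k(Y)\hookrightarrow k(Z)=k(X)$; as $k(X)=k(Z)$ these embeddings agree, so $\rho$ can be taken over $Y$, i.e.\ $g\circ\rho=f$, and $Z$ becomes a conjugate of $X$ over $Y$. Identifying the underlying spaces of $X$ and $Z$ through $\rho$, I would compare the two structure sheaves chart by chart exactly as in the proof of \emph{Lemma 5.4}: for a local chart $(W,\delta,C)\in\mathcal{C}_{Z}$, quasi-galois closedness of $\mathcal{C}_{X}$ forces the coordinate ring $C\subseteq\Omega$ to reappear as the coordinate ring of an affine open $U'=Spec(C)$ of $X$, and symmetrically; the uniqueness of the two maximal coverings then pins $\mathcal{C}_{X}$ and $\mathcal{C}_{Z}$ down to the same collection of subrings of $\Omega$. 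Invoking the first defining condition of Definition 3.9 for $X$ being \emph{qc} over $Y$, the conjugate $Z$ being essentially affine in $\Omega$ yields that $(X,\mathcal{O}_{X})$ and $(Z,\mathcal{O}_{Z})$ are essentially equal, which is the assertion.

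The hard part will be the identification step of the third paragraph. Concretely, one must check that the abstract isomorphism $\rho$ is genuinely compatible with the two structures over $Y$, so that $Z$ legitimately qualifies as a conjugate of $X$ over $Y$; and, for an affine open $U$ on which the coordinate rings $D_{1}=\mathcal{O}_{X}(U)$ and $D_{2}=\mathcal{O}_{Z}(U)$ are not literally equal as subrings of $\Omega$, one must verify the inverse condition $x\in D_{1}\setminus D_{2}\Leftrightarrow x^{-1}\in D_{2}\setminus D_{1}$ of \emph{Definition 3.6} defining essential equality. This is exactly where the common value field $\Omega$ and the canonical agreement of the two $k(Y)$-embeddings must be used, the remaining verifications being routine.
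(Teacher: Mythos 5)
Your proposal is correct and takes essentially the same route as the paper, whose entire proof is the one-line remark that the lemma is immediate from Definition 3.9: the isomorphism exhibits $Z$ as a conjugate of $X$ over $Y$, $Z$ is essentially affine in $\Omega=k(X)^{al}$ because it is itself \emph{qc} over $Y$, and the first condition of Definition 3.9 then yields essential equality, your additional appeals to Lemma 5.1 and to the chart comparison of Lemma 5.4 being harmless elaboration. The one point you rightly flag as delicate --- upgrading the abstract isomorphism $\rho$ to an isomorphism over $Y$, where your assertion that the two $k(Y)$-embeddings into $k(X)=k(Z)$ automatically agree is not by itself a justification --- is a gap that the paper's own proof silently shares, so your attempt is, if anything, more careful than the original.
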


\begin{proof}
It is immediate from \emph{Definition 3.9}.
\end{proof}

\begin{remark}
Let $X$ and $Z$ both be  \emph{qc} over an integral $K$-variety $Y$. Suppose $k(X)=k(Z)$.
In general, it is not true that  $X$ and $Z$  are essentially equal.
\end{remark}

\section{Main Property of \emph{qc} Schemes}

Let $L/K$ be a field extension. The integral schemes $X/Y$ are said to be a \textbf{geometric
model} for the extension $L/K$ if there is a group isomorphism ${Aut}\left(
X/Y\right)\cong Gal\left( L/K\right) $ (e.g., see \cite{SGA1,sv1,sv2}).

In this section we will prove that \emph{qc} schemes afford such a geometric model for an extension of a function field.

\subsection{Function fields of \emph{qc} schemes}

The function fields of \emph{qc} schemes are quasi-galois.

\begin{lemma}
Let $X$ and $Y$ be two integral schemes such that $X$ is qc over $Y$ by
a surjective morphism $f$ of finite type. Then the function field $k\left( X\right) $ is canonically quasi-galois  over the function field $f(Y)$.
\end{lemma}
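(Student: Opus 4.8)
The plan is to translate the scheme-theoretic quasi-galois closedness of $X$ over $Y$ into the field-theoretic quasi-galois property of $k(X)$ over the canonical image $M$ of $k(Y)$ in $k(X)$ (this is what ``$f(Y)$'' denotes), and then to invoke the purely field-theoretic criterion of \emph{Lemma 4.3}. Since $f$ is surjective and of finite type between integral schemes, $L\triangleq k(X)$ is a finitely generated extension of $M$, so the framework of \S \emph{4} applies; this is precisely the role played by the finite-type hypothesis. By \emph{Lemma 5.1}, $X$ carries a unique maximal affine covering $\mathcal{C}_{X}$ with values in $\Omega\triangleq k(X)^{al}$, and $\mathcal{C}_{X}$ is quasi-galois closed over $Y$ by $f$. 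Every local chart $(U_{\alpha},\phi_{\alpha};A_{\alpha})\in\mathcal{C}_{X}$ is an affine open subscheme of the integral scheme $X$, whence $Fr(A_{\alpha})=L$.

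By \emph{Lemma 4.3} it suffices to prove that every conjugation of $L$ over $M$ is contained in $L$ (in fact equals $L$). So I would fix a conjugation $L^{\prime}$ of $L$ over $M$; since such a conjugation satisfies $\overline{L^{\prime}}=\overline{L}$ by \emph{Definition 3.3}, we may take $L^{\prime}\subseteq\Omega$, and the conjugation is given by an $F$-isomorphism $\tau:L\to L^{\prime}$ with $F=M(\Delta)$ for a transcendental basis $\Delta$ of $L$ over $M$ and $F\subseteq L\cap L^{\prime}$. The key step is to realize $\tau$ geometrically: I would transport the covering $\mathcal{C}_{X}$ through $\tau$, replacing each chart ring $A_{\alpha}\subseteq L$ by $\tau(A_{\alpha})\subseteq L^{\prime}\subseteq\Omega$ and gluing the spectra $Spec(\tau(A_{\alpha}))$ along the $\tau$-images of the original gluing data. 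This produces an integral scheme $Z$ with $k(Z)=L^{\prime}$ that is essentially affine in $\Omega$, together with a scheme isomorphism $X\cong Z$ induced by $\tau$.

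Because $\tau$ fixes $F\supseteq M$, it fixes the canonical image $B_{\alpha}$ of $\mathcal{O}_{Y}(V_{\alpha})$ inside $L$, so the composite $Z\to Y$ is well-defined and the isomorphism $X\cong Z$ is a morphism over $Y$; thus $Z$ is a conjugate of $X$ over $Y$. The transported charts form a reduced affine covering $\mathcal{C}_{Z}=\{(\,\cdot\,;\tau(A_{\alpha}))\}$ of $Z$ with values in $\Omega$, so the second condition of \emph{Definition 3.9} forces $\mathcal{C}_{Z}\subseteq\mathcal{C}_{X}$. But every chart of $\mathcal{C}_{X}$ has fraction field $L$, whence $L^{\prime}=Fr(\tau(A_{\alpha}))=L$. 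Since the conjugation $L^{\prime}$ was arbitrary, \emph{Lemma 4.3} yields that $L$ is quasi-galois over $M$, with $M\hookrightarrow L$ the canonical embedding induced by $f$, as required.

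I expect the main obstacle to be the careful verification in the last two paragraphs that the $\tau$-transported data genuinely assembles into an integral scheme $Z$ qualifying as a conjugate of $X$ over $Y$ in the sense of \emph{Definition 3.9}: that the gluing maps are preserved under $\tau$, that $\mathcal{C}_{Z}$ is reduced and with values in $\Omega$, and that the induced map $X\cong Z$ is an isomorphism over $Y$. The field-theoretic bookkeeping (that $\tau$ fixes $M$ and that $Fr(A_{\alpha})=L$ for every chart) is routine; the real content is this geometric realization of an arbitrary field conjugation as a scheme conjugate, which is exactly what lets the defining conditions of \emph{Definition 3.9} be brought to bear.
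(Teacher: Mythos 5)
Your proposal is correct within the paper's framework and closes exactly as the paper does, via criterion $(ii)$ of \emph{Lemma 4.3}; the difference lies in how the field conjugation is realized geometrically. The paper's proof is local and element-wise: given a conjugation $M$ of $L=k(X)$ over $H=k(Y)$ via an isomorphism $\sigma$, it fixes a single element $w=\sigma(u)\in M\setminus H$, locates $u$ in one affine chart $A=\mathcal{O}_{X}(U)$ with $U\subseteq f^{-1}(V)$, notes that $B=\sigma(A)$ is a conjugation of $A$ over $\mathcal{O}_{Y}(V)$, and invokes \emph{Theorem 5.6} (equivalently, the quasi-galois closed covering of \emph{Lemma 5.1}) to conclude that $B$ is itself the ring of a chart of $\mathcal{C}_{X}$, hence $B\subseteq L$ and $w\in L$; letting $w$ vary gives $M\subseteq L$. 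You instead transport the entire covering $\mathcal{C}_{X}$ through $\tau$, glue the spectra $Spec(\tau(A_{\alpha}))$ into a conjugate scheme $Z$ of $X$ over $Y$, and apply the second condition of \emph{Definition 3.9} to force $\mathcal{C}_{Z}\subseteq\mathcal{C}_{X}$ and hence $L^{\prime}=L$. These are the two sides of the equivalence recorded in \emph{Theorem 5.6}: you argue at the level of the scheme-theoretic definition of \emph{qc}, the paper at the level of the quasi-galois closed covering. The paper's chart-wise route buys economy, since it entirely avoids the gluing verification that you correctly flag as the main burden of your construction; your global route applies \emph{Definition 3.9} verbatim and makes the geometric realization of the conjugation explicit (it is essentially the universal construction of \emph{Lemma 6.2} run through $\tau$), at the cost of that extra bookkeeping. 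Both arguments are sound at the paper's level of rigor, and your explicit appeal to the finite-type hypothesis (to place $L/M$ in the finitely generated setting required by \S \emph{4}) is a point the paper leaves implicit.
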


\begin{proof}
For brevity, assume that $H\triangleq k(Y)$ is contained in $L\triangleq k(X)$. Let $M$ be a conjugation of $L$ over $H$.

Fixed an element $w\in M\setminus H$. There is an element $u\in L\setminus H$ and an $H-$isomorphism $\sigma:L\to M$ such that $w=\sigma(u)$.

As $X$ is essentially affine in an algebraic closure $\Omega$ of $L$, we must have some affine open set $U$ in $X$ such that $u$ is contained in the ring $$A\triangleq\mathcal{O}_{X}(U)\subseteq\Omega$$
where $U$ is contained in $f^{-1}(V)$ for some affine open set $V$ in $Y$.

Put $B=\sigma(A)$. The ring $B$ is a conjugation of $A$ (canonically) over the ring $\mathcal{O}_{Y}(V)$. By \emph{Theorem 5.6} it is seen that $B$ must be contained in $L$ and then the element $w\in B$ is contained in $L$. Hence, we have $M\subseteq L$.

From \emph{Lemma 4.3} it is seen that $L$ is quasi-galois over $H$.
\end{proof}

\subsection{\emph{qc} schemes as geometric models}

Let $X$ and $Y$ be two integral $K$-varieties and let $f:X\rightarrow Y$ be a surjective morphism.

\begin{lemma}
Suppose that $X$ is qc over $Y$ by $f$ and $k\left( X\right) $
is canonically  Galois  over $k\left( Y\right) $. Then there  is a group isomorphism
\begin{equation*}
Aut\left( X/Y\right) \cong Gal\left( k\left( X\right) /k\left( Y\right)
\right) .
\end{equation*}
\end{lemma}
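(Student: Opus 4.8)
The plan is to construct a group isomorphism $Aut(X/Y) \cong Gal(k(X)/k(Y))$ directly, by exhibiting a natural homomorphism in one direction and proving it is bijective. The natural map goes from $Aut(X/Y)$ to $Gal(k(X)/k(Y))$: any automorphism $\sigma: X \to X$ over $Y$ fixes the generic point (since $X$ is integral and $\sigma$ is a homeomorphism commuting with $f$), hence induces a field automorphism $\sigma^{\sharp}$ of $k(X) = \mathcal{O}_{X,\xi}$ that restricts to the identity on the canonical image of $k(Y)$. This gives a well-defined group homomorphism $\Phi: Aut(X/Y) \to Gal(k(X)/k(Y))$. The whole content of the lemma is that $\Phi$ is both injective and surjective.

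**Injectivity.**
First I would prove $\Phi$ is injective. Suppose $\sigma \in Aut(X/Y)$ induces the identity on $k(X)$. Since $X$ is qc over $Y$, by \emph{Lemma 5.1} it carries a unique maximal affine covering $\mathcal{C}_{X}$ with values in an algebraic closure $\Omega$ of $k(X)$, and every local ring $A_{\alpha} = \mathcal{O}_{X}(U_{\alpha})$ sits inside $\Omega$ as a genuine subring. An automorphism inducing the identity on $k(X)$ must therefore act as the identity on each such subring $A_{\alpha}$, and because the affine patching identifies $U_{\alpha}$ with $Spec(A_{\alpha})$ (via the identity map on the underlying space, per \emph{Definition 3.4}), $\sigma$ must be the identity on each chart and hence on all of $X$. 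The essential point here is that the values-in-$\Omega$ formalism rigidifies the charts: once the ring map is the identity, the scheme map is forced to be the identity.

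**Surjectivity --- the main obstacle.**
The hard part will be surjectivity: given $\theta \in Gal(k(X)/k(Y))$, I must produce an automorphism $\sigma_{\theta}: X \to X$ over $Y$ realizing it. Here I would lean on the qc structure and the Universal Construction of \S 6. For each local chart $(V, \psi_V, B_V) \in \mathcal{C}_Y$ with preimage chart $(U_V, \varphi_V, A_V)$, the Galois automorphism $\theta$ restricts to a field automorphism of $L = k(X)$ fixing $M = k(Y)$, and since $A_V = B_V[\Delta_V]$ is generated over $B_V$ by a $Gal(L/M)$-stable set (by the construction in \emph{Lemma 6.2}, Step 2), we have $\theta(A_V) = A_V$; thus $\theta$ restricts to a ring automorphism of each $A_V$ fixing $B_V$. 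This yields a scheme automorphism $\sigma_{\theta,V}$ of each $Spec(A_V)$ over $V$. I would then check that these local automorphisms are compatible on overlaps --- this is where the uniqueness of the maximal qc covering (again \emph{Lemma 5.1}) does the work, since any candidate conjugate chart produced by $\theta$ already lies in $\mathcal{C}_X$ and agrees on its function-field values --- so they glue to a global $\sigma_{\theta} \in Aut(X/Y)$ with $\Phi(\sigma_{\theta}) = \theta$. The delicate step is verifying that the $A_V$ are genuinely $Gal(L/M)$-stable and that the gluing is consistent; both reduce to the quasi-galois-closedness of $\mathcal{C}_X$ together with \emph{Theorem 4.1}, which guarantees (since $k(X)/k(Y)$ is Galois) that $k(X)$ is quasi-galois over $k(Y)$, so that every conjugate ring appearing under the action of $\theta$ is already present as a chart of $X$ rather than escaping into a larger scheme.

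**Homomorphism property and conclusion.**
Finally I would confirm that $\Phi$ and its inverse $\theta \mapsto \sigma_{\theta}$ respect composition: this is immediate once both directions are set up, since composition of scheme automorphisms induces composition of the associated field automorphisms on $\mathcal{O}_{X,\xi}$ contravariantly in the usual way, and the construction $\theta \mapsto \sigma_{\theta}$ is manifestly multiplicative because it is defined chart-by-chart through the $Gal(L/M)$-action on each $A_V$. Combining injectivity, surjectivity, and the homomorphism property gives the desired isomorphism $Aut(X/Y) \cong Gal(k(X)/k(Y))$.
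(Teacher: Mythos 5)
Your overall architecture --- the natural map $\Phi:Aut(X/Y)\to Gal(k(X)/k(Y))$ induced at the generic point, then injectivity and surjectivity --- coincides with the paper's (its map $t$ is your $\Phi$ up to the usual inverse convention), and your injectivity argument via rigidity of charts with values in $\Omega$ is acceptable within the paper's framework; the paper argues instead that two automorphisms with the same image agree on a dense affine open set and hence on all of $X$. The problem lies in surjectivity.

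There you construct $\sigma_{\theta}$ by asserting that each chart ring is globally stable, $\theta(A_{V})=A_{V}$, citing the explicit form $A_{V}=B_{V}[\Delta_{V}]$ with $\Delta_{V}$ a $Gal(L/M)$-stable generating set from Step 2 of \emph{Lemma 6.2}. But the $X$ in this lemma is an \emph{arbitrary} scheme that is qc over $Y$; nothing in the hypotheses says it is the output of the universal construction of \S 6, and the paper gives you no way to reduce to that case: \emph{Lemma 6.4} requires an isomorphism as a hypothesis, and \emph{Remark 6.5} explicitly warns that two qc schemes over $Y$ with the same function field need not be essentially equal. Moreover, the justification you offer for stability --- quasi-galois closedness of $\mathcal{C}_{X}$ --- does not deliver it: \emph{Definition 3.8} and \emph{Lemma 5.1} say only that every conjugate $\theta(A)$ of a chart ring $A$ over $B$ is the ring of \emph{some} (possibly different) chart in $\mathcal{C}_{X}$, not that $\theta(A)=A$. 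Indeed stability fails even inside the universal construction once one passes to smaller charts: a localization $A_{g}$ has conjugate $A_{\theta(g)}$, in general a different chart. Without stability, your local automorphism $\sigma_{\theta,V}$ of $Spec(A_{V})$ over $V$ simply does not exist, so the construction halts at its first step. The repair is exactly the paper's route, which you half-describe in your gluing discussion: do not ask charts to map to themselves; for each chart $U$ with ring $A$ lying over an affine open $V$ of $Y$, quasi-galois closedness supplies a chart $U^{\prime}$ with ring $\theta(A)$, and $\theta$ induces an isomorphism $U\to U^{\prime}$ over $V$; these isomorphisms agree on overlaps because they are all restrictions of the single field automorphism $\theta$, and they glue to an automorphism $\sigma_{\theta}\in Aut(X/Y)$ with $\Phi(\sigma_{\theta})=\theta$.
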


\begin{proof}
In the following we will use the trick developed in \cite{An2,An2*} to prove the second statement above.

In fact, define a mapping
\begin{equation*}
t:Aut\left( X/Y\right) \longrightarrow Gal\left( k\left( X\right) /k\left(
Y\right) \right)
\end{equation*}%
by
\begin{equation*}
\sigma =(\sigma ,\sigma ^{\sharp })\longmapsto t(\sigma )=\left\langle
\sigma ,\sigma ^{\sharp -1}\right\rangle
\end{equation*}%
where $\left\langle \sigma ,\sigma ^{\sharp -1}\right\rangle $ is the map of
$k(X)$ into $k(X)$ given by
\begin{equation*}
\left( U,f\right) \in \mathcal{O}_{X}(U)\subseteq k\left( X\right)
\longmapsto \left( \sigma \left( U\right) ,\sigma ^{\sharp -1}\left(
f\right) \right) \in \mathcal{O}_{X}(\sigma (U))\subseteq k\left( X\right)
\end{equation*}%
for any open set $U$ in $X$ and any element $f\in \mathcal{O}_{X}(U)$.
Here the function field $k(X)$ is taken canonically as the set of elements of the form ${\left( U,f\right)}$.

It is easily seen that $t$ is well-defined.
We will proceed in several steps to prove that $t$ is a group
isomorphism.

\emph{Step 1.} Prove that ${t}$ is injective. Fixed any $\sigma ,\sigma
^{\prime }\in {Aut}\left( X/Y\right) $ such that $t\left( \sigma \right)
=t\left( \sigma ^{\prime }\right) .$ We have
\begin{equation*}
\left( \sigma \left( U\right) ,\sigma ^{\sharp -1}\left( f\right) \right)
=\left( \sigma ^{\prime }\left( U\right) ,\sigma ^{\prime \sharp -1}\left(
f\right) \right)
\end{equation*}%
for any $\left( U,f\right) \in k\left( X\right) .$ In particular, for any $%
f\in \mathcal{O}_{X}(U_{0})$ we have
\begin{equation*}
\left( \sigma \left( U_{0}\right) ,\sigma ^{\sharp -1}\left( f\right)
\right) =\left( \sigma ^{\prime }\left( U_{0}\right) ,\sigma ^{\prime \sharp
-1}\left( f\right) \right)
\end{equation*}%
where $U_{0}$ is an affine open subset of $X$ such that $\sigma \left(
U_{0}\right) $ and $\sigma ^{\prime }\left( U_{0}\right) $ are both
contained in $\sigma \left( U\right) \cap \sigma ^{\prime }\left( U\right) $.

It is seen that
\begin{equation*}
\sigma |_{U_{0}}=\sigma ^{\prime }|_{U_{0}}
\end{equation*}%
holds as isomorphisms of schemes. As $U_{0}$ is dense in $X$, we have
\begin{equation*}
\sigma =\sigma |_{\overline{U_{0}}}=\sigma ^{\prime }|_{\overline{U_{0}}%
}=\sigma ^{\prime }
\end{equation*}%
on the whole of $X$; then
\begin{equation*}
\sigma \left( U\right) =\sigma ^{\prime }\left( U\right) ;
\end{equation*}
hence,
\begin{equation*}
\sigma =\sigma ^{\prime }.
\end{equation*}
This proves that $t$ is an injection.

\emph{Step 2.} Prove that ${t}$ is surjective. Fixed any element $\rho $ of
the group $Gal\left( k\left( X\right) /k\left( Y\right) \right) $.

As $k(X)=\{(U_{f},f):f\in \mathcal{O}_{X}(U_{f})\text{ and }U_{f}\subseteq X%
\text{ is open}\}$, we have
\begin{equation*}
\rho :\left( U_{f},f\right) \in k\left( X\right) \longmapsto \left( U_{\rho
\left( f\right) },\rho \left( f\right) \right) \in k\left( X\right) ,
\end{equation*}
where $U_{f}$ and $U_{\rho (f)}$ are open sets in $X$, $f$ is contained in $%
\mathcal{O}_{X}(U_{f})$, and $\rho (f)$ is contained in $\mathcal{O}%
_{X}(U_{\rho (f)})$.

It is seen  that each element of $Gal\left( k\left( X\right) /k\left(
Y\right) \right) $ gives a unique element of ${Aut}(X/Y)$.

 In fact, fixed any affine open set $V$ of $Y$. It is easily seen that for each affine
open set $U\subseteq \phi ^{-1}(V)$ there is an affine open set $U_{\rho }$
in $X$ such that $\rho $ determines an isomorphism $\lambda _{U}$ between
affine schemes $(U,\mathcal{O}_{X}|_{U})$ and $(U_{\rho },\mathcal{O}%
_{X}|_{U_{\rho }})$. Then
\begin{equation*}
\lambda _{U}|_{U\cap U^{\prime }}=\lambda _{U^{\prime }}|_{U\cap U^{\prime }}
\end{equation*}%
holds as morphisms of schemes for any affine open sets $U,U^{\prime
}\subseteq \phi ^{-1}(V)$.

Glue $\lambda _{U}$ along all such affine open subsets $U\subseteq \phi ^{-1}(V)$, where $V$ runs through all affine open sets in $Y$. Then we have an
automorphism $\lambda $ of the scheme $X$ such that $\lambda |_{U}=\lambda
_{U}$ for any affine open set $U$ in $X$.

It is clear that $t\left(
\lambda \right) =\rho $. Hence, ${t}$ is a surjection.

This completes the
proof.
\end{proof}

\begin{lemma}
Suppose that $X$ is qc over $Y$ by $f$ and $k\left( X\right) $
is canonically  Galois  over $k\left( Y\right) $. Then $f$ is an affine morphism and there is a natural
isomorphism
$$
\mathcal{O}_{Y}\cong f _{\ast }(\mathcal{O}_{X})^{{Aut}\left( X/Y\right) }
$$
where $(\mathcal{O}_{X})^{{Aut}\left( X/Y\right) }(U)$ denotes the invariant
subring of $\mathcal{O}_{X}(U)$ under the natural action of ${Aut}\left(
X/Y\right) $ for any open subset $U$ of $X$.
\end{lemma}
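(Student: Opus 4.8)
The plan is to reduce the global statement to a local computation over affine opens of $Y$, exploiting the rigid affine structure that the \emph{qc} hypothesis forces on $X$. Since $X$ is \emph{qc} over $Y$ by $f$, \emph{Theorem 5.6} together with \emph{Lemma 5.1} furnishes a unique maximal affine covering $\mathcal{C}_{X}$ of $X$ with values in an algebraic closure $\Omega$ of $k(X)$ that is quasi-galois closed over $Y$ by $f$. By the Universal Construction for \emph{qc} Covers (\emph{Lemma 6.2}) and the essential uniqueness of \emph{qc} covers (\emph{Theorem 6.3} and \emph{Lemma 6.4}), I may assume that $X$ carries exactly the affine structure produced there: for each local chart $(V,\psi_{V},B_{V})$ in a maximal reduced affine covering $\mathcal{C}_{Y}$ of $Y$ there is a chart $(U_{V},\varphi_{V},A_{V})\in\mathcal{C}_{X}$ with $U_{V}=f^{-1}(V)$, with $A_{V}=B_{V}[\Delta_{V}]$, and with $B_{V}$ the invariant subring of the natural $Gal(k(X)/k(Y))$-action on $A_{V}$.

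First I would show that $f$ is affine. Fix an affine open $V=Spec(B_{V})$ in $Y$. Using the quasi-galois closed structure of $\mathcal{C}_{X}$ over $Y$ (\emph{Definition 3.8}) and the description above, I would verify that $f^{-1}(V)$ coincides with the affine open $U_{V}=Spec(A_{V})$, the restriction $f|_{U_{V}}$ being induced by the inclusion $i_{V}:B_{V}\hookrightarrow A_{V}$. Since such $V$ range over an affine cover of $Y$, preimages of affine opens are affine, whence $f$ is an affine morphism.

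Next I would compute the invariant sheaf on affine opens. For each affine open $V$ in $Y$ one has $f_{*}(\mathcal{O}_{X})(V)=\mathcal{O}_{X}(f^{-1}(V))=A_{V}$. By \emph{Lemma 7.2} the group $Aut(X/Y)$ acts on $\mathcal{O}_{X}(f^{-1}(V))=A_{V}$ through the isomorphism $Aut(X/Y)\cong Gal(k(X)/k(Y))$, and this action is precisely the natural Galois action on $A_{V}$. Since $B_{V}$ is the invariant subring of $A_{V}$ under that action, I obtain
\[
(f_{*}(\mathcal{O}_{X}))^{Aut(X/Y)}(V)=A_{V}^{Aut(X/Y)}=B_{V}=\mathcal{O}_{Y}(V).
\]
Because these identifications are compatible with the restriction maps between nested affine opens, they assemble into a morphism of sheaves; as the affine opens form a basis for the topology of $Y$, this yields the desired natural isomorphism $\mathcal{O}_{Y}\cong f_{*}(\mathcal{O}_{X})^{Aut(X/Y)}$ of sheaves on $Y$.

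The main obstacle will be the third step: verifying that the action of $Aut(X/Y)$ on the sections $\mathcal{O}_{X}(f^{-1}(V))=A_{V}$ agrees \emph{on the nose} with the Galois action of $Gal(k(X)/k(Y))$ on $A_{V}$ for which $B_{V}$ is the invariant subring. This requires tracing the correspondence $\sigma\mapsto\langle\sigma,\sigma^{\sharp-1}\rangle$ of \emph{Lemma 7.2} through the affine charts, checking that each automorphism of $X$ over $Y$ restricts on $A_{V}\subseteq k(X)$ to the prescribed field automorphism, and confirming that these restrictions commute with the sheaf restriction maps, so that the resulting family of ring isomorphisms is genuinely a sheaf morphism rather than merely a section-wise bijection.
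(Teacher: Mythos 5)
Your opening reduction is a genuine gap, and it is where the whole proposal collapses. You write that by \emph{Lemma 6.2} together with ``the essential uniqueness of \emph{qc} covers (\emph{Theorem 6.3} and \emph{Lemma 6.4})'' you may assume $X$ carries exactly the affine structure of the universal construction, i.e.\ $U_{V}=f^{-1}(V)$, $A_{V}=B_{V}[\Delta_{V}]$, and $B_{V}=A_{V}^{Gal(k(X)/k(Y))}$. But \emph{Lemma 6.4} is not a uniqueness statement of that kind: it takes as \emph{hypotheses} both $k(X)=k(Z)$ and the existence of an isomorphism $X\cong Z$, and only then concludes essential equality; it cannot be used to manufacture the isomorphism you need between the given $X$ and the constructed cover. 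Worse, the paper's \emph{Remark 6.5} states explicitly that two schemes that are both \emph{qc} over $Y$ with the same function field are in general \emph{not} essentially equal --- and combined with \emph{Lemma 6.4} this means they are in general not even isomorphic. So there is no ``without loss of generality'' here: an arbitrary $X$ that is \emph{qc} over $Y$ need not have the chart structure of the universal construction, and once that assumption is removed, your proof of affineness (which rests on $f^{-1}(V)=U_{V}$ being a chart) and your computation $(f_{*}\mathcal{O}_{X})^{Aut(X/Y)}(V)=B_{V}$ (which rests on $B_{V}$ being the invariant subring by construction) both have nothing to stand on. Those two facts are precisely the content of the lemma, not inputs one may import from a special model.

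The paper's own proof avoids any appeal to the universal construction. It works intrinsically: using \emph{Lemma 6.1} it puts $Y$ (and hence $X$) in essentially affine position inside an algebraic closure $\Omega$ of $k(X)$, sets $G=Aut(X/Y)$, and invokes \emph{Lemma 7.2} together with the Galois hypothesis to get $k(X)^{G}=k(Y)$. Then, for an affine open $U\subseteq f^{-1}(V)$, it shows $\mathcal{O}_{X}(U)^{G}=\mathcal{O}_{Y}(V)$ by an element-wise contradiction argument (a putative invariant $w\in\mathcal{O}_{X}(U)^{G}\setminus\mathcal{O}_{Y}(V)$ is played off against $w^{-1}$ to contradict $k(X)^{G}=k(Y)$), extends this to arbitrary opens, and finally deduces that $f$ is affine from surjectivity of $f$ plus the invariance computation, rather than the other way around. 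If you want to salvage your plan, you must replace the reduction by an argument of this intrinsic type; the only legitimate structural input the \emph{qc} hypothesis gives you is the unique maximal quasi-galois closed covering of \emph{Lemma 5.1}/\emph{Theorem 5.6}, which constrains the set of charts but does not tell you what the rings $A_{V}$ are or that $f^{-1}(V)$ is one of them.
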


\begin{proof}

Let $\Omega$ be an algebraic closure of the function field $k(X)$. By \emph{Lemma 6.1} assume that $Y$ is essentially affine in $\Omega$ without loss of generality. In particular, suppose $k(Y)\subseteq k(X)$ for brevity.

It is clear that $X$ is essentially affine in $\Omega$. Put $$G={{Aut}\left( X/Y\right) }.$$

Fixed a point $x\in X$ and an affine open set $U$ in $X$ with $x\in U$. Then there must be $${\mathcal{O}_{X}(U)}^{G}=\mathcal{O}_{Y}(V)$$ for  an affine open set $V$ in $Y$ such that $f(x)\in V$ and $U\subseteq f^{-1}(V)$.

Otherwise, if there is some $$w\in {\mathcal{O}_{X}(U)}^{G}\setminus \mathcal{O}_{Y}(V),$$ we have $$w\not \in k(Y),\, w^{-1} \in k(Y)$$ or $$w \in k(Y),\, w^{-1}\not \in k(Y).$$

If $w \in k(Y)$ and $ w^{-1}\not \in k(Y)$, it is seen that $w\not \in k(Y)$ holds since we have $$\sigma(w\cdot w^{-1})=\sigma(w)\cdot \sigma(w^{-1})=1$$ for any $\sigma \in G$.

Hence, for both cases we will have $w\not \in k(Y)$, which will be in contradiction to the fact that $$k(X)^{G}=k(Y)$$ holds by \emph{Lemma 7.2}.

Now consider any open set $U$ in $X$. We have $${\mathcal{O}_{X}(U)}^{G}=\mathcal{O}_{Y}(V)$$ for  an open set $V$ in $Y$ such that $U\subseteq f^{-1}(V)$ since $\mathcal{O}_{X}(U)$ can be regarded as a subring of $\mathcal{O}_{X}(U_{0})$ for an affine open set $U_{0}\subseteq U$. This prove that $$
\mathcal{O}_{Y}=(\mathcal{O}_{X})^{G}
$$ holds.

Conversely, take any affine open set $V$ of $Y$. As $f$ is surjective, it is seen that there is an affine open $U\subseteq f^{-1}(V)$ of $X$ such that $${\mathcal{O}_{X}(U)}^{G}\supseteq\mathcal{O}_{Y}(V).$$ Repeating the same procedure above, we can choose $U$ to be such that $${\mathcal{O}_{X}(U)}^{G}=\mathcal{O}_{Y}(V).$$ It follows that $$U= f^{-1}(V)$$ holds. This proves that $f$ is affine.
\end{proof}

\subsection{Main property of \emph{qc} schemes}

The \emph{qc} schemes behave like quasi-galois extensions of fields.

\begin{theorem}
Let $X$ and $Y$ be two algebraic $K$-varieties such that $k(X)$ is separably generated over $k(Y)$ canonically. Suppose that $X$ is $qc$ over $Y
$ by a surjective morphism $f$ of finite type. Then there are the following statements:

\begin{itemize}
\item $f$ is affine.

\item $k\left( X\right) $ is Galois over $k(Y)$ canonically.

\item There is a group isomorphism
\begin{equation*}
{Aut}\left( X/Y\right) \cong Gal(k\left( X\right) /k(Y)).
\end{equation*}
\end{itemize}

In particular, let $\dim X=\dim Y$. Then $X$ is a pseudo-galois cover of \, $Y$ in the sense of Suslin-Voevodsky.
\end{theorem}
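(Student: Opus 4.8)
The plan is to assemble the three itemized conclusions from the lemmas already established in this section, and then to read off the \emph{pseudo-galois} statement from them. The crucial structural observation is that both \emph{Lemma 7.2} and \emph{Lemma 7.3} take as hypothesis precisely that $k(X)$ is canonically Galois over $k(Y)$; hence, once the second bullet (the Galois property) is in hand, the first and third bullets follow with essentially no further work. So the entire weight of the proof is the upgrade of the given data to the Galois property, and everything else is bookkeeping.

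First I would establish that $k(X)$ is Galois over $k(Y)$. Since $f$ is surjective of finite type and $X$ is \emph{qc} over $Y$ by $f$, \emph{Lemma 7.1} applies verbatim and shows that $k(X)$ is canonically quasi-galois over $k(Y)$. Because $f$ is of finite type, $k(X)$ is a finitely generated field extension of $k(Y)$; combining this with the standing hypothesis that $k(X)$ is separably generated over $k(Y)$, the criterion of \emph{Theorem 4.1} is met, and we conclude that $k(X)$ is Galois over $k(Y)$ canonically. This is exactly the second bullet.

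With the Galois property secured, the remaining two bullets are immediate consequences. \emph{Lemma 7.3} yields that $f$ is an affine morphism (and, as a bonus, the invariant-ring identification $\mathcal{O}_Y\cong f_{\ast}(\mathcal{O}_X)^{Aut(X/Y)}$), which gives the first bullet; and \emph{Lemma 7.2} furnishes the group isomorphism $Aut(X/Y)\cong Gal(k(X)/k(Y))$, which is the third bullet.

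For the final assertion, suppose in addition that $\dim X=\dim Y$. Then the transcendence degree of $k(X)$ over $k(Y)$ vanishes, so $k(X)$ is algebraic over $k(Y)$; being also finitely generated, it is a finite extension, whence $k(X)/k(Y)$ is a \emph{finite} Galois extension. Together with the facts that $f$ is affine of finite type, that $\mathcal{O}_Y$ is realized locally as the invariant ring $f_{\ast}(\mathcal{O}_X)^{Aut(X/Y)}$, and that $Aut(X/Y)\cong Gal(k(X)/k(Y))$, this exhibits $X\to Y$ as a \emph{pseudo-galois cover} in the sense of Suslin--Voevodsky; indeed, since the extension is separable, the cover is genuinely Galois, a special case of the pseudo-galois notion. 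The step I expect to demand the most care is matching the assembled data to the precise Suslin--Voevodsky definition: concretely, confirming that ``affine of finite type'' together with the finiteness of $k(X)/k(Y)$ delivers the \emph{finiteness} of $f$ required there. This is where the argument is least formal, and I would discharge it using the normality of the integral schemes in play together with the \emph{qc} structure, the invariant-ring identification of \emph{Lemma 7.3} forcing each fiber to be finite and the affine morphism to be finite.
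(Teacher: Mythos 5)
Your proposal is correct and follows exactly the route the paper takes: the paper's proof is the one-line citation ``It is immediate from \emph{Theorem 4.1} and \emph{Lemmas 7.1-3},'' and you have assembled precisely these ingredients in the intended order (\emph{Lemma 7.1} for quasi-galois, finite type plus separable generation feeding \emph{Theorem 4.1} for Galois, then \emph{Lemmas 7.2-3} for the isomorphism and affineness). Your added care about matching the Suslin--Voevodsky definition in the $\dim X=\dim Y$ case goes beyond what the paper records, which offers no detail on that point at all.
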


\begin{proof}
It is immediate from \emph{Theorem 4.1} and \emph{Lemmas 7.1-3}.
\end{proof}

Here for pseudo-galois cover, see \cite{sv1,sv2} for definition and property.

\section{\emph{sp}-Completion}

By the graph functor, there is an \emph{sp-}completion of a given integral scheme, which is an integral scheme of the same length but have the maximal combinatorial graph.

The \emph{sp-}completion can give a type of completions of rational maps between schemes.

In the present paper, the \emph{sp-}completion will be applied to definitions of formally unramified extension of fields in \S \emph{9} and of monodromy actions of automorphism groups in \S \emph{12}.

\subsection{The graph functor $\Gamma$ from schemes to graphs}

For convenience, in this subsection we will review the graph functor developed in \cite{An1,An8}. See \cite{An1,An8} for proofs of the results listed below.

Let $X$ be a scheme.
Take any points $x,y$ in $X$.

If $y$ is in the (topological) closure $\overline{\{x\}}$,
$y$ is a \textbf{\ specialization} of $x$ (or,
$x$ is  a \textbf{generalization} of $y $) in $X$, denoted by $x\rightarrow y$.

Put $Sp\left( x\right) =\{y\in X\mid x\rightarrow y\}$. Then
 $Sp\left( x\right) =\overline{\{x\}}$ is an irreducible
closed  subset in $X$.

If $x\rightarrow y$ and $y\rightarrow x$ both hold in $X$, $y$ is
a \textbf{generic specialization} of $x$ in $X$, denoted by $x\leftrightarrow y$.

$x$ is said to be \textbf{generic} (or
\textbf{initial}) in $X$ if we must have $x\leftrightarrow z$ for any $z\in X$
such that $z\rightarrow x$.

$x$ is said to be \textbf{closed} (or
\textbf{final}) if we must have $x\leftrightarrow z$ for any $z\in X$ such that $x\rightarrow z$.

We have the following preliminary facts for specializations.

\begin{lemma}
\emph{(\cite{An1,An8})} For any points $x,y \in X$, we have $x\leftrightarrow y$
in $X$ if and only if $x=y$.
\end{lemma}

\begin{lemma}
\emph{(\cite{An1,An8})} Fixed any specialization $x\rightarrow y$ in $X$. Then
there is an affine open subset $U$ of $X$ such that the two points $x$ and $y
$ are both contained in $U$.
In particular, any affine open set in $X$ containing (the specialization) $y$
must contain (the generalization) $x$.
\end{lemma}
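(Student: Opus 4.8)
The plan is to first dispatch the ``in particular'' clause, which is in fact the logical engine of the whole statement, and then read off the existence of the affine open $U$ from the fact that $X$, being a scheme, is locally affine.

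First I would unwind the definition of specialization. By hypothesis $x\rightarrow y$, which by definition means $y\in \overline{\{x\}}=Sp(x)$. The standard characterization of the topological closure says that a point lies in $\overline{\{x\}}$ precisely when every one of its open neighborhoods meets $\{x\}$; since $\{x\}$ is a single point, ``meets'' here means ``contains $x$''. Hence for every open set $U\subseteq X$ with $y\in U$ we automatically have $x\in U$. This is exactly the ``in particular'' assertion, and it holds verbatim in any topological space, using nothing about the scheme structure.

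Next I would produce the affine open set. Since $X$ is a scheme it carries an affine open cover, so there is at least one affine open subset $U$ of $X$ with $y\in U$. Applying the previous paragraph to this $U$ gives $x\in U$ as well, so $U$ is an affine open set containing both $x$ and $y$, as required. (One could alternatively invoke \emph{Lemma 8.1}, but that is unnecessary here.)

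The argument uses no hypotheses beyond $X$ being a scheme; in particular neither integrality, reducedness, nor any finiteness condition on $X$ is needed. There is no serious obstacle, and the only point demanding care is bookkeeping the direction of the specialization relation: it is the \emph{specialization} $y$ whose neighborhoods are being tested, while the \emph{generalization} $x$ is the point forced to lie in every neighborhood of $y$. The entire content is the elementary fact that a generalization belongs to every open set containing the corresponding specialization, combined with the local affineness of schemes.
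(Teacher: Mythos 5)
Your proof is correct and complete: the observation that $y\in\overline{\{x\}}$ forces every open set containing $y$ to contain $x$ is exactly the content of the lemma, and combining it with the existence of an affine open cover yields the first assertion. The paper itself gives no proof of this statement (it is quoted from \cite{An1,An8} as a preliminary fact), and your elementary topological argument is precisely the standard one that establishes it, including the correct remark that neither integrality nor any finiteness hypothesis on $X$ plays any role.
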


\begin{lemma}
\emph{(\cite{An1,An8})} Any morphism between schemes is
specialization-preserving. That is, fixed any morphism $f:X\to Y$ between schemes. Then there is a specialization $f\left( x\right)
\rightarrow f\left( y\right) $ in $Y$ for any specialization $x\rightarrow y$
in $X$.
\end{lemma}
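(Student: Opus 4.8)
The plan is to reduce everything to the single topological fact that the underlying map of a scheme morphism is continuous, together with the elementary observation that continuous maps send closures into closures. Recall from the definitions just above that the specialization $x \rightarrow y$ in $X$ means precisely $y \in \overline{\{x\}}$, i.e. $y \in Sp(x)$, where $Sp(x) = \overline{\{x\}}$ is the irreducible closed set attached to $x$. Thus the statement $f(x) \rightarrow f(y)$ that I must establish is, by definition, the containment $f(y) \in \overline{\{f(x)\}}$ in $Y$.

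First I would record that a morphism of schemes $f : X \to Y$ is, in particular, a continuous map of the underlying topological spaces. Next I would invoke the standard point-set fact that for any continuous map $f$ and any subset $S$ of its source one has $f(\overline{S}) \subseteq \overline{f(S)}$; this follows because $f^{-1}(\overline{f(S)})$ is a closed set containing $S$, hence containing $\overline{S}$. Applying this with the singleton $S = \{x\}$ gives
\begin{equation*}
f\bigl(\overline{\{x\}}\bigr) \subseteq \overline{f(\{x\})} = \overline{\{f(x)\}}.
\end{equation*}

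Finally I would combine these two inputs. Since $x \rightarrow y$ gives $y \in \overline{\{x\}}$, applying $f$ yields $f(y) \in f\bigl(\overline{\{x\}}\bigr)$, and by the displayed containment $f(y) \in \overline{\{f(x)\}}$. By the definition of specialization this is exactly $f(x) \rightarrow f(y)$ in $Y$, as required.

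I do not expect any genuine obstacle here: the assertion is elementary and is essentially a restatement of continuity, so the entire content lies in unwinding the definition of specialization as membership in a topological closure. The only point warranting a line of care is the justification of $f(\overline{S}) \subseteq \overline{f(S)}$, which I would handle by the preimage argument above rather than citing it as folklore, so that the proof is self-contained relative to the topological setup used for \emph{Lemmas 8.1} and \emph{8.2}.
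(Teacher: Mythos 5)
Your proof is correct: the paper itself gives no argument for this lemma (it is stated as a preliminary fact with the proof deferred to the cited references \cite{An1,An8}), and your reduction of specialization-preservation to continuity of the underlying map plus the containment $f\bigl(\overline{\{x\}}\bigr) \subseteq \overline{\{f(x)\}}$ is exactly the standard argument one would find there. Nothing is missing; the preimage justification of $f(\overline{S}) \subseteq \overline{f(S)}$ makes the write-up self-contained.
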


Let's recall preliminary definitions on combinatorial graphs (see \cite{tut}). Fixed a graph $G$.
Let $V(G)$ be the \textbf{set of vertices} in $G$ and $E(G)$ the \textbf{set of edges} in $G$.

A graph  $H$ is a \textbf{subgraph} of $G$, denoted by $H \subseteq G$, provided that
$V(G)\supseteq V(H)$,
$E(G)\supseteq E(H)$, and
every $L\in E(H)$ has the same ends in $H$ as in $G$.

Now we obtain the graph functor $\Gamma$ from \emph{Lemmas 8.1-3}.

\begin{lemma}
\emph{(\textbf{Graph Functor} \cite{An1,An8})} There exists a covariant functor $\Gamma$, called the \textbf{graph functor}, from the category $Sch$ of schemes to the category $Grph$ of combinatorial graphs,
given in such a natural manner:
\begin{itemize}
\item For a scheme $X$, $\Gamma(X)$ is the graph  in which the vertex
set is the set of points in the underlying space $X$ and the edge set is the
set of specializations in $X$.
Here, for any points $x,y \in X$, we say that there is an edge from $x$ to $y
$ if and only if there is a specialization $x\rightarrow y$ in $X$.

\item For a morphism $f:X\rightarrow Y$ of schemes,
$\Gamma(f): \Gamma(X)\rightarrow \Gamma(Y)$ is the homomorphism between graphs.
Here, any specialization $x \rightarrow y$ in the scheme $X$ as an edge in $\Gamma(X)$,
is mapped by $\Gamma(f)$ into the specialization $f(x)\rightarrow f(y)$ as an edge in $\Gamma(Y)$.
\end{itemize}
\end{lemma}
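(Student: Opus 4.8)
The plan is to verify the three defining requirements of a covariant functor directly, leaning on the preliminary facts for the only substantive point. First I would confirm that the assignment $X\mapsto \Gamma(X)$ produces a well-defined object of $Grph$. By construction the vertex set is the underlying point set of $X$ and the edge set is the collection of specializations, each $x\rightarrow y$ regarded as an edge with ends $x$ and $y$. Here \emph{Lemma 8.1} guarantees that the specialization relation is antisymmetric, since $x\leftrightarrow y$ forces $x=y$; thus no two distinct vertices are joined by edges in both directions, and $\Gamma(X)$ is a legitimate combinatorial graph in the sense recalled before the statement.

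Next I would show that for a scheme morphism $f:X\rightarrow Y$ the assignment $\Gamma(f)$ is a genuine graph homomorphism. On vertices, $\Gamma(f)$ is simply the underlying map $x\mapsto f(x)$; on edges, an edge $x\rightarrow y$ of $\Gamma(X)$ is sent to the ordered pair $(f(x),f(y))$. The one thing requiring verification is that this pair is again an edge of $\Gamma(Y)$, that is, that $f(x)\rightarrow f(y)$ is a specialization in $Y$. This is exactly the content of \emph{Lemma 8.3}, the specialization-preserving property of morphisms. Since the map on edges is defined through its endpoints, the incidence data are preserved by construction, so $\Gamma(f)$ satisfies the homomorphism conditions verbatim.

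Finally I would check the two functoriality axioms. For the identity $id_{X}$, the underlying map fixes every point and hence every specialization, so $\Gamma(id_{X})=id_{\Gamma(X)}$. For a composite $g\circ f$ with $f:X\rightarrow Y$ and $g:Y\rightarrow Z$, both $\Gamma(g\circ f)$ and $\Gamma(g)\circ\Gamma(f)$ send a vertex $x$ to $g(f(x))$ and an edge $x\rightarrow y$ to $g(f(x))\rightarrow g(f(y))$, so the two graph homomorphisms agree. This establishes covariance.

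The argument is almost entirely formal, and the single substantive input is \emph{Lemma 8.3}, which ensures edges map to edges. I expect the only point needing care to be the bookkeeping of edge incidence: confirming that under $\Gamma(f)$ the ends of each image edge are precisely the images of the ends of the original edge, so that the subgraph and homomorphism conditions recalled above are met exactly. Everything else reduces to unwinding the definitions.
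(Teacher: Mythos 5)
Your proposal is correct and takes essentially the same approach as the paper, which obtains the graph functor directly from \emph{Lemmas 8.1-3} (deferring details to \cite{An1,An8}): in both cases the single substantive input is \emph{Lemma 8.3}, the specialization-preserving property of morphisms, which guarantees that edges map to edges. The identity and composition checks you carry out explicitly are exactly the formal verifications the paper leaves implicit.
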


\subsection{\emph{sp}-completion}

Let $G$ and $H$ be combinatorial graphs.
Recall that an \textbf{isomorphism} $t$ from $G$ onto $H$ is a ordered pair $(t_{V},t_{E})$ satisfying the conditions:
\begin{itemize}
\item $t_{V}$ is a bijection from $V(G)$ onto $V(H)$;

\item $t_{E}$ is a bijection from $E(G)$ onto $E(H)$;

\item Let $x \in V(G)$ and $L\in E(G)$. Then $x $ is incident with $L$ if
and only if $t_{V}(x) \in V(H)$ is incident with $t_{E}(L)\in E(H)$.
\end{itemize}

The following definition says that the graph of a given integral scheme is maximal relative to its function field.

\begin{definition}
An integral scheme $X$ is said to be \textbf{$sp$-complete} if $X$ must be essentially equal to $Y$ for any integral scheme $Y$ such that

\begin{itemize}
\item $\Gamma (X)$ is isomorphic to a subgraph of $\Gamma (Y)$;

\item $k(Y)$ is contained in a separable closure of $k(X)$.
\end{itemize}
\end{definition}

\begin{remark}
The function field of an $sp$-complete integral scheme must be a separable closure. In other words, there is no other separably closed points that can be added to an $sp$-complete integral scheme.
\end{remark}

Now we give the existence of the $sp$-completion of an integral scheme.

\begin{theorem}
For any integral scheme $X$, there is
an integral scheme $X_{sp}$ and a surjective morphism $\lambda_{X}:X_{sp}\to
X$ satisfying the following properties:
\begin{itemize}
\item $\lambda_{X}$ is affine;

\item $X_{sp}$ is $sp$-complete;

\item $X_{sp}$ is essentially affine in $k(X)^{al}$;

\item $k(X_{sp})$ is a separable closure of $k(X)$;

\item $X_{sp}$ is quasi-galois closed over $X$ by $\lambda_{X}$.
\end{itemize}
\end{theorem}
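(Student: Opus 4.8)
The plan is to realize $X_{sp}$ as the quasi-galois closed cover of $X$ attached to the full separable closure of its function field, and then to check the single genuinely new property, namely $sp$-completeness. First I would reduce to the essentially affine case: by \emph{Lemma 6.1} we may replace $X$ by an isomorphic integral scheme that is essentially affine in $k(X)^{al}$, so without loss of generality $X$ is essentially affine in an algebraic closure of $M \triangleq k(X)$.

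Set $L \triangleq k(X)^{sep}$. Since the invariant subfield of $Gal(L/M)$ is exactly $M$, the extension $L$ is Galois over $M$ in the sense of \emph{Definition 3.1} (note that $L$ need not be finitely generated, which is explicitly allowed in \emph{Lemma 6.2}). I would then feed the Galois extension $L/M$ into the universal construction of \S 6, i.e. \emph{Theorem 6.3} together with \emph{Lemma 6.2}, taking $Y = X$. This produces an integral scheme $X_{sp}$ and a surjective morphism $\lambda_X : X_{sp} \to X$ such that $k(X_{sp}) = L = k(X)^{sep}$, such that $\lambda_X$ is affine, such that $X_{sp}$ is quasi-galois closed over $X$ by $\lambda_X$, and such that $X_{sp}$ is essentially affine in $L^{al} = k(X)^{al}$. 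Thus four of the five listed properties, together with the surjectivity of $\lambda_X$, are immediate.

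It remains to show that $X_{sp}$ is $sp$-complete, and this is the step I expect to be the main obstacle. The decisive simplification is that $k(X_{sp}) = k(X)^{sep}$ is already separably closed, so a separable closure of $k(X_{sp})$ is $k(X_{sp})$ itself; the hypotheses on a test scheme $Y$ in \emph{Definition 8.2} therefore collapse to the single requirement that $\Gamma(X_{sp})$ be isomorphic to a subgraph of $\Gamma(Y)$ with $k(Y) \subseteq k(X_{sp})$. Under these hypotheses I would argue that the subgraph embedding is essentially onto, following the mechanism indicated in \emph{Remark 8.3}. A vertex of $\Gamma(Y)$ lying outside the image would, through the affine structure and the specialization order recorded by the graph functor (\emph{Lemmas 8.1--8.4}), correspond to a new prime in some affine chart, hence to a separably closed point not already present in $X_{sp}$. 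But $X_{sp}$ was built from the full Galois orbit over $k(X)^{sep}$, so by the uniqueness and maximality of its affine covering (\emph{Lemma 5.1}) every such point is already realized, and no new separable element is available inside $k(Y) \subseteq k(X_{sp})$; hence no such vertex exists.

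Once the underlying graphs are seen to coincide, I would conclude by invoking the notion of essential equality (\emph{Definitions 3.6--3.7}): since $X_{sp}$ and $Y$ are both essentially affine in $k(X)^{al}$, share the same underlying space, and satisfy $k(Y) \subseteq k(X_{sp})$, the structure-sheaf conditions of \emph{Definition 3.6} hold chart by chart, so $X_{sp}$ is essentially equal to $Y$. This establishes $sp$-completeness and finishes the proof. The technical heart, requiring the most care, is making precise the claim that a genuinely new vertex of $\Gamma(Y)$ forces a new separable element of the function field; the remaining content is a direct appeal to the constructions of \S\S 5--6.
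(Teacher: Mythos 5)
Your route is in substance the paper's own: the proof of this theorem in \S 8.2 (the ``Universal Construction for $sp$-Completion'') is exactly the universal construction of \S 6 repeated verbatim with $L=k(X)^{sep}$ in place of a general Galois extension, together with the same initial reduction by \emph{Lemma 6.1}; so invoking \emph{Lemma 6.2} and \emph{Theorem 6.3} with $Y=X$ and $L=k(X)^{sep}$ builds the identical object. Your observation that $k(X)^{sep}$ is Galois over $k(X)$ in the sense of \emph{Definition 3.1} is correct, and it is what makes that citation legitimate.

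There is, however, one concrete gap of scope. The theorem is asserted for \emph{any} integral scheme $X$, while \emph{Lemma 6.2} and \emph{Theorem 6.3} are stated only for integral $K$-varieties, i.e.\ integral schemes over a field (\S 3.1). A scheme such as $Spec(\mathbb{Z})$, or the integral $\mathbb{Z}$-varieties of \S 9.3 for which the paper actually needs $sp$-completions (\emph{Lemma 9.9}, \emph{Theorem 9.15}), does not live over any field, so your proof as written does not cover the stated generality. The repair is cheap --- the construction in the proof of \emph{Lemma 6.2} uses only the charts $B_V$ and the Galois orbits $\Delta_V\subseteq L$, never the base field, and the paper handles the general case precisely by repeating that construction rather than citing it --- but you should say this explicitly instead of applying \emph{Lemma 6.2} as a black box. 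On $sp$-completeness itself you need not feel deficient: your sketch (a vertex of $\Gamma(Y)$ outside the image of $\Gamma(X_{sp})$ would yield a prime, hence a separably closed point, not realized in the maximal charts $B_V[\Delta_V]$, which is impossible when $k(Y)\subseteq k(X)^{sep}$) is already more of a verification than the paper provides, since its proof ends by asserting that $X_{sp}$ and $\lambda_{X}$ are the desired scheme and morphism without checking any of the five listed properties; the burden you correctly identify as the technical heart is one the paper discharges silently.
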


Such a scheme $X_{sp}$ with a morphism $\lambda_{X}$, denoted by $(X_{sp},\lambda_{X})$, is said to be an \textbf{$sp$-completion}
of $X$. We will denote by $Sp[X]$ the set of all \emph{sp}-completions of an integral scheme $X$.

\begin{proof}
(\textbf{Universal Construction for $sp$-Completion}) Here repeat the construction developed in \cite{An8}.

Let $K=k\left( X\right) $ and $L=K^{sep}$.
Fixed a transcendental basis $\Delta_{1}$ of $L$ over $K$ and  a linear basis $\Delta_{2}$ of $L$ as a vector space over $K(\Delta_{1})$. Put $$G=Gal\left( L/K\right) ; \, \Delta=\Delta_{1}\bigcup \Delta_{2}.$$

By
\emph{Lemma 6.1}, without loss of generality, assume that $X$ has a reduced
affine covering $\mathcal{C}_{X}$ with values in $L$. We choose $\mathcal{C}_{X}$ to be maximal (by set inclusion).

We  proceed in several steps such as the following to give the construction:

\begin{itemize}
\item Fixed a local chart $\left( V,\psi _{V},B_{V}\right) \in \mathcal{C}_{X}$. Define $A_{V}=B_{V}\left[ \Delta _{V}\right] $, where $\Delta
_{V}=\{\sigma \left( x\right) \in L:\sigma \in G,x\in \Delta \}. $ Set $i_{V}:B_{V}\rightarrow A_{V}$ to be the inclusion.

\item Let
\begin{equation*}
\Sigma =\coprod\limits_{\left( V,\psi _{V},B_{V}\right) \in \mathcal{C}
_{X}}Spec\left( A_{V}\right)
\end{equation*}
be the disjoint union. Denote by $\pi _{X}:\Sigma \rightarrow X$  the
projection induced by the inclusions $i_{V}$.

\item Given an equivalence relation $R_{\Sigma }$ in $\Sigma $ in such a
manner:
\begin{quotation}
\emph{For any $x_{1},x_{2}\in \Sigma $, we say $x_{1}\sim x_{2}$ if and only if
$j_{x_{1}}=j_{x_{2}}$ holds in $L$.
Here $j_{x}$ denotes the corresponding prime ideal of $A_{V}$ to a point $x\in Spec\left( A_{V}\right) $.}
\end{quotation}
Let $X_{sp}$ be the quotient space $\Sigma /\sim $ and let $\pi_{sp}:\Sigma
\rightarrow X_{sp}$ be the projection of spaces.

\item Set a map $\lambda_{X}:X_{sp}\rightarrow X$ of topological spaces by $\pi_{sp}\left( z\right) \longmapsto \pi _{X}\left( z\right) $ for each $z\in
\Sigma $.

\item Suppose
\begin{equation*}
\mathcal{C}_{X_{sp}}=\{\left( U_{V},\varphi _{V},A_{V}\right) \}_{\left(
V,\psi _{V},B_{V}\right) \in \mathcal{C}_{X}}.
\end{equation*}%
Here $U_{V}=\pi _{X}^{-1}\left( V\right) $ and $\varphi
_{V}:U_{V}\rightarrow Spec(A_{V})$ is the identity map for each $\left(
V,\psi _{V},B_{V}\right) \in \mathcal{C}_{X}$.

\item There is a scheme, namely $X_{sp}$, by gluing the affine schemes $Spec\left( A_{V}\right) $ for all $\left( U_{V},\varphi _{V},A_{V}\right)
\in \mathcal{C}_{X}$ with respect to the equivalence relation $R_{\Sigma }$.
Naturally, $\lambda_{X}$ becomes a morphism of schemes.
\end{itemize}

It is seen that $X_{sp}$ and $\lambda_{X}$ are the desired scheme and morphism, respectively.
\end{proof}

There is the uniqueness of $sp$-completions such as the following.

\begin{lemma}
Fixed any integral $K$-varieties $X$. Then
all sp-completions of $X$ are essentially equal.
\end{lemma}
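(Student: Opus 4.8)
*Fixed any integral $K$-variety $X$. Then all $sp$-completions of $X$ are essentially equal.*

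The plan is to reduce the statement to the defining property of $sp$-completeness together with the uniqueness results of \S\S 5--6. Let $(X_{1},\lambda_{1})$ and $(X_{2},\lambda_{2})$ be two $sp$-completions of $X$. By \emph{Theorem 8.7} both $X_{1}$ and $X_{2}$ are essentially affine in $\Omega\triangleq k(X)^{al}$, are quasi-galois closed over $X$ by their respective affine morphisms, and have function fields that are separable closures of $k(X)$. Since the separable closure of $k(X)$ inside the fixed algebraic closure $\Omega$ is unique, I may regard both function fields as the same subfield $L\triangleq k(X)^{sep}\subseteq\Omega$, so that $k(X_{1})=k(X_{2})=L$.

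First I would record that the function-field hypothesis of \emph{Definition 8.5} is automatic: since $L=k(X)^{sep}$ is separably closed, $k(X_{2})=L$ is contained in the separable closure of $k(X_{1})=L$. Hence, applying $sp$-completeness of $X_{1}$ with $Y=X_{2}$, it suffices to exhibit an isomorphism of $\Gamma(X_{1})$ onto a subgraph of $\Gamma(X_{2})$; granting this, \emph{Definition 8.5} yields at once that $X_{1}$ and $X_{2}$ are essentially equal, which is the assertion.

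The main work is therefore the comparison of the two graphs, and this is where the real obstacle lies. Because $\lambda_{1},\lambda_{2}$ are affine and surjective, over each affine open $V\subseteq X$ with $B_{V}\triangleq\mathcal{O}_{X}(V)$ one has $\lambda_{i}^{-1}(V)=Spec(A_{V}^{(i)})$ with $A_{V}^{(i)}\subseteq\Omega$, $Fr(A_{V}^{(i)})=L$, and $A_{V}^{(i)}$ quasi-galois closed over $B_{V}$. I would identify the points of $X_{i}$ with the prime ideals they induce in $L$ (as in the equivalence relation $R_{\Sigma}$ of the construction in \emph{Theorem 8.7}), that is, with the separably closed points lying over $X$; since each $X_{i}$ is $sp$-complete, each carries \emph{all} such points and hence the maximal graph relative to $L$. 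Using that morphisms preserve specializations (\emph{Lemma 8.3}) and that both $\lambda_{i}$ are affine with the same function field over the same base, the vertex sets and specialization relations of $\Gamma(X_{1})$ and $\Gamma(X_{2})$ are put into correspondence, giving the desired embedding (in fact an isomorphism) of graphs.

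The delicate point---flagged by \emph{Remark 6.5}, which shows that being quasi-galois closed with equal function field is by itself insufficient for essential equality---is that the maximality forced by $sp$-completeness must be shown to pin down the point set independently of the auxiliary choice of basis $\Delta$ made in the construction. Once the graph embedding is secured, the argument closes via \emph{Definition 8.5}; alternatively, having produced an isomorphism $X_{1}\cong X_{2}$ over $X$, one may invoke \emph{Lemma 6.4} directly, since $X_{1},X_{2}$ are quasi-galois closed over $X$ with $k(X_{1})=k(X_{2})$.
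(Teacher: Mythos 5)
Your proposal is correct and takes essentially the same approach as the paper: the paper's own proof of this lemma is a one-line appeal to \emph{Definition 8.5}, \emph{Remark 8.6}, and \emph{Theorem 8.7}, which is exactly the combination you unpack (the field hypothesis of \emph{Definition 8.5} is automatic since both function fields equal $k(X)^{sep}$, the graph embedding comes from the maximality of the set of separably closed points asserted in \emph{Remark 8.6}, and the conclusion follows from the definition of $sp$-completeness). Your write-up is in fact more explicit than the paper's, which silently leaves to the reader the graph-comparison step that you correctly flag as the delicate point.
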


\begin{proof}
It is seen from \emph{Definition 8.5}, \emph{Remark 8.6}, and \emph{Theorem 8.7}.
\end{proof}

\begin{lemma}
Fixed any two integral $K$-varieties $X$ and $Y$. Suppose that $k(X)$ and $k(Y)$ have the same separable closure. Then either $$Sp[X]=Sp[Y]$$  or $$Sp[X]\bigcap Sp[Y]=\emptyset$$ holds.
\end{lemma}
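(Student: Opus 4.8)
The plan is to analyze the dichotomy through the structure of $sp$-completions as quotients of the separable closure. First I would recall from \emph{Theorem 8.7} that for any integral $K$-variety $X$, every $sp$-completion $(X_{sp},\lambda_{X})$ has function field $k(X_{sp})$ equal to a separable closure of $k(X)$, and is quasi-galois closed over $X$. The hypothesis that $k(X)$ and $k(Y)$ have the same separable closure is the key structural input: it means that the ambient field $L=k(X)^{sep}=k(Y)^{sep}$ into which all the affine rings embed is literally the same field $\Omega$ for both schemes. Thus every member of $Sp[X]$ and every member of $Sp[Y]$ is essentially affine in the same $L^{al}$, and their function fields are all equal to $L$.

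The core of the argument is to show that membership in $Sp[X]$ versus $Sp[Y]$ is governed entirely by a single invariant, so that the two sets cannot partially overlap. Concretely, I would argue as follows. Suppose $Sp[X]\cap Sp[Y]\neq\emptyset$, and fix some $Z$ lying in both. Then $Z$ is an $sp$-completion of $X$ and also of $Y$, so by \emph{Lemma 8.8} every member of $Sp[X]$ is essentially equal to $Z$, and likewise every member of $Sp[Y]$ is essentially equal to $Z$. Since essential equality of integral schemes is an equivalence relation (and, by the remark following \emph{Definition 3.8}, forces isomorphism), transitivity immediately yields that any $W\in Sp[X]$ is essentially equal to any $W'\in Sp[Y]$. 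To close the loop I must upgrade ``essentially equal to a common scheme'' into the honest set equality $Sp[X]=Sp[Y]$: for this I would take an arbitrary $W\in Sp[X]$, observe that $W$ is essentially equal to $Z\in Sp[Y]$, and then verify directly from \emph{Definition 8.5} that $W$ itself satisfies the $sp$-completeness condition relative to $Y$ — namely that $W$ is quasi-galois closed over $Y$ and has function field a separable closure of $k(Y)=$ the same $L$ — so that $W$ qualifies as an $sp$-completion of $Y$, giving $W\in Sp[Y]$ and hence $Sp[X]\subseteq Sp[Y]$; the reverse inclusion is symmetric.

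The main obstacle I anticipate is the final upgrade step: showing that ``essentially equal to an element of $Sp[Y]$'' is enough to conclude ``is an element of $Sp[Y]$.'' This requires checking that the defining data of an $sp$-completion — the surjective affine morphism $\lambda$ to the base, the quasi-galois closedness, and the $sp$-completeness of \emph{Definition 8.5} — are all preserved under essential equality, and that the base over which $W$ is quasi-galois closed can be taken to be $Y$ rather than merely $X$. The hypothesis that $k(X)$ and $k(Y)$ share a separable closure is exactly what makes the morphisms $W\to X$ and $W\to Y$ comparable, since both are recovered as the structure map from the $G$-action (with $G=Gal(L/\,\cdot\,)$) on the same affine patching with values in $L^{al}$; essential equality preserves this patching data by \emph{Definition 3.8}. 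Once that preservation is established, the dichotomy follows formally from the transitivity of essential equality together with \emph{Lemma 8.8}. I would therefore organize the write-up as: (1) identify the common ambient separable closure $L$ and note all completions live in $L^{al}$; (2) assume a common element $Z$ and invoke \emph{Lemma 8.8} twice; (3) prove the preservation of the $sp$-completion data under essential equality to get both inclusions; and (4) conclude $Sp[X]=Sp[Y]$, with the alternative $Sp[X]\cap Sp[Y]=\emptyset$ holding whenever no common element exists.
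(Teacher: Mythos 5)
Your proof is correct and takes essentially the same approach as the paper: the paper's entire proof of this lemma is ``It is immediate from \emph{Lemma 8.8},'' and your argument---applying \emph{Lemma 8.8} to both $X$ and $Y$ at a common element $Z$, then using transitivity of essential equality and upgrading ``essentially equal to an element of $Sp[Y]$'' to actual membership in $Sp[Y]$---is precisely the elaboration of that one-liner. The upgrade step you single out as the main obstacle is exactly the detail the paper leaves implicit, and your verification of it (same separable closure, hence same ambient $L^{al}$, intrinsic $sp$-completeness, and transfer of the quasi-galois closed structure map) stays within the paper's framework.
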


\begin{proof}
It is immediate  from \emph{Lemma 8.8}.
\end{proof}

\begin{remark}
An \emph{sp}-completion of an integral scheme is {\emph{sp}-complete}. By \emph{sp}-completion we can give a completion of rational maps between integral schemes.
\end{remark}

\begin{remark}
An integral scheme $X$ and its \emph{sp}-completion $X_{sp}$ have the same dimension. However, the \emph{sp}-completion is very complicated and exotic. In general, it is not true that $X_{sp}$ is of finite type over $X$.
 For example, let $t$ be a variable over $\mathbb{Q}$. It is seen that $Spec(\overline{\mathbb{Q}})$ and $Spec(\overline{\mathbb{Q}(t)})$ are \emph{sp}-completions of $Spec(\mathbb{Q})$ and $Spec(\mathbb{Q}(t))$, respectively. Their underlying spaces are very different.
\end{remark}

\begin{remark}
By \emph{Theorem 8.7} it is seen that an \emph{sp}-completion of an integral scheme behaves like a separable closure of a field.
\end{remark}

\section{Unramified Extensions of Function Fields}

In this section we will use \emph{sp}-complete schemes to introduce a notion of formally unramified
extensions over function fields and then give several preliminary properties. The formally unramified extensions will be applied to the computation of \'{e}tale fundamental groups.

\subsection{Basic lemma}

Fixed a field $K$. We have the following basic result.

\begin{lemma}
Let $X$ and $Y$ be two integral $K$-varieties satisfying the two conditions:
\begin{itemize}
\item $Sp[X]=Sp[Y]$ are equal sets.
\item $Aut(X_{sp}/X)\cong Aut(Y_{sp}/Y)$ are isomorphic groups.
\end{itemize}
Then $X$ and $Y$ are isomorphic schemes.
\end{lemma}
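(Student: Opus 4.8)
The plan is to realise both $X$ and $Y$ as invariant-quotients of one common sp-completion and then move $X$ onto $Y$ by a single automorphism of that completion. First I would use the hypothesis $Sp[X]=Sp[Y]$ together with \emph{Theorem 8.7} and \emph{Lemma 8.8} to fix one sp-completion $W\in Sp[X]=Sp[Y]$, carrying surjective affine morphisms $\lambda_{X}:W\to X$ and $\lambda_{Y}:W\to Y$ and being qc over both $X$ and $Y$. Since $W$ is, up to essential equality, simultaneously $X_{sp}$ and $Y_{sp}$, its function field $L\triangleq k(W)$ is at once a separable closure of $k(X)$ and of $k(Y)$; in particular $L$ is canonically Galois over each of $k(X)$ and $k(Y)$, with $L^{Gal(L/k(X))}=k(X)$ and $L^{Gal(L/k(Y))}=k(Y)$.

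Next I would apply \emph{Lemmas 7.2 and 7.3} (which, unlike \emph{Theorem 7.4}, do not require $\lambda_{X}$ to be of finite type) to obtain the canonical identifications $Aut(W/X)\cong Gal(L/k(X))$ and $Aut(W/Y)\cong Gal(L/k(Y))$ inside the automorphism group of $W$, together with the reconstruction formulas $\mathcal{O}_{X}\cong(\lambda_{X})_{\ast}(\mathcal{O}_{W})^{Aut(W/X)}$ and $\mathcal{O}_{Y}\cong(\lambda_{Y})_{\ast}(\mathcal{O}_{W})^{Aut(W/Y)}$. These formulas say that each of $X$ and $Y$ is completely recovered from $W$ by the subgroup over which one takes invariants, so it is enough to find an automorphism $\theta$ of $W$ with $\theta\,Aut(W/X)\,\theta^{-1}=Aut(W/Y)$: such a $\theta$ would send $(\mathcal{O}_{W})^{Aut(W/X)}$ to $(\mathcal{O}_{W})^{Aut(W/Y)}$ and hence descend, along $\lambda_{X}$ and $\lambda_{Y}$, to a scheme isomorphism $X\to Y$, the two sides being essentially equal and therefore isomorphic (\S 3.4).

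The hard part is exactly the passage from the \emph{abstract} group isomorphism $Aut(X_{sp}/X)\cong Aut(Y_{sp}/Y)$ to such a conjugacy realised \emph{inside} the automorphism group of $W$: in general an isomorphism between two subgroups of a group need not be induced by conjugation, and distinct intermediate fields of $L$ can carry isomorphic Galois groups, so abstract isomorphism alone looks too weak. To close this gap I would exploit the rigidity of the sp-completion — $W$ is sp-complete and qc, so by \emph{Lemma 5.1} and \emph{Remark 5.2} it has a \emph{unique} maximal affine covering with values in $L^{al}$ — in order to transport the given isomorphism to a genuine automorphism $\theta$ of the scheme $W$ (equivalently a field automorphism of $L=k(W)$), and then read off that $\theta(k(X))=k(Y)$. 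Once the two subgroups $Aut(W/X)$ and $Aut(W/Y)$ are shown to be conjugate in this geometric way, the \emph{Galois} correspondence of \emph{Theorem 4.1} together with $L^{Gal(L/k(X))}=k(X)$ and $L^{Gal(L/k(Y))}=k(Y)$ yields $\theta(k(X))=k(Y)$ as subfields of $L$, and the reconstruction formulas then give the desired isomorphism $X\cong Y$.
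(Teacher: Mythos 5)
Your reduction itself is sound as far as it goes: with a common sp-completion $W$, the identifications $Aut(W/X)\cong Gal(L/k(X))$, $Aut(W/Y)\cong Gal(L/k(Y))$ and the reconstruction formulas $\mathcal{O}_{X}\cong(\lambda_{X})_{\ast}(\mathcal{O}_{W})^{Aut(W/X)}$, $\mathcal{O}_{Y}\cong(\lambda_{Y})_{\ast}(\mathcal{O}_{W})^{Aut(W/Y)}$ do follow from \emph{Lemmas 7.2--3} together with \emph{Theorem 8.7}, and any $\theta\in Aut(W)$ with $\theta\,Aut(W/X)\,\theta^{-1}=Aut(W/Y)$ would indeed descend to an isomorphism $X\to Y$. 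The genuine gap is that you never produce $\theta$. You correctly identify this as ``the hard part'' and then defer it to ``the rigidity of the sp-completion'' (\emph{Lemma 5.1}, \emph{Remark 5.2}), but that appeal does no work: the uniqueness of the maximal affine covering of $W$ with values in $L^{al}$ is a statement about the scheme structure of $W$ alone, and it provides no mechanism for converting an \emph{abstract} group isomorphism $\sigma:Aut(W/X)\to Aut(W/Y)$ into an automorphism of $W$ (equivalently, of the field $L$) that conjugates one subgroup onto the other. As you yourself note, an abstract isomorphism between two subgroups need not be inner; and deciding when an abstract isomorphism $Gal(L/k(X))\cong Gal(L/k(Y))$ is realized by an automorphism of $L$ carrying $k(X)$ onto $k(Y)$ is itself a statement of anabelian type, at least as strong as the lemma you are trying to prove. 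So the proposal reduces the lemma to an unproved claim that subsumes it, and the step meant to close that circle is absent.

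The paper's proof avoids conjugacy entirely, and this is the essential difference. It uses $Sp[X]=Sp[Y]$ to fix an isomorphism $t:X_{sp}\to Y_{sp}$, and then uses the transitivity of $Aut(X_{sp}/X)$ and $Aut(Y_{sp}/Y)$ on the fibers of $\lambda_{X}$ and $\lambda_{Y}$ (again \emph{Lemmas 7.2--3}) to define a map $f_{sp}:X_{sp}\to Y_{sp}$ pointwise on orbits by $g(x_{0})\mapsto\sigma(g)(t(x_{0}))$, feeding the given abstract isomorphism $\sigma$ directly into the construction as a twist of $t$; the morphism $f:X\to Y$ is then obtained by descent via $f\circ\lambda_{X}=\lambda_{Y}\circ f_{sp}$. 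In other words, where you try to upgrade $\sigma$ to an inner automorphism of $Aut(W)$ before descending, the paper builds the descended map directly from $\sigma$, $t$, and the orbit description of fibers. (Note that even the paper's construction tacitly requires $\sigma$ to respect point stabilizers for $f_{sp}$ to be well defined; that compatibility is exactly the kind of data your conjugating $\theta$ was supposed to encode, which underlines that this, and not ``rigidity,'' is the crux any complete argument must address.)
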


\begin{proof}
 Fixed any \emph{sp}-completions $(X_{sp},\lambda_{X})$ of $X$ and $(Y_{sp},\lambda_{Y})$ of $Y$, respectively. As $Sp[X]=Sp[Y]$, we have an isomorphism $$t:X_{sp}\to Y_{sp}.$$

Let $$\sigma:Aut(X_{sp}/X)\to Aut(Y_{sp}/Y)$$ be an isomorphism between groups.

 Take any point $x_{0}\in X$. From \emph{Lemmas 7.2-3} we have $$\lambda^{-1}_{X}(x_{0})=\{g(x_{0})\in X_{sp}:g\in Aut(X_{sp}/X)\};$$
$$\lambda^{-1}_{Y}(t(x_{0}))=\{h(t(x_{0}))\in Y_{sp}:h\in Aut(Y_{sp}/Y)\}.$$
It follows that there exists a morphism $f_{sp}:X_{sp}\to Y_{sp}$ given by $$ g(x_{0})\mapsto \sigma(g)(t(x_{0})).$$

 From $f_{sp}$ we obtain a morphism $f:X\to Y$ given by $$x_{0}=\lambda_{X}(g(x_{0}))\mapsto \lambda_{Y}(\sigma(g)(t(x_{0})))$$ satisfying the property $$f\circ \lambda_{X}=\lambda_{Y}\circ f_{sp}.$$

It is easily seen that $f_{sp}:X_{sp}\to Y_{sp}$ is an isomorphism. Hence, $f:X\to Y$ is an isomorphism.
\end{proof}

\subsection{Formally unramified extensions}

Fixed an integral $K$-variety $X$ over a field $K$. Let $L_{1}$ and $ L_{2}$ be two algebraic extensions over the function field $k(X)$, respectively.

\begin{definition}
$L_{2}$ is said to be a \textbf{finite $X$-formally
unramified Galois extension}  over $L_{1}$ if there are two integral $K$-varieties
$X_{1}$ and $X_{2}$ and a surjective morphism $f:X_{2}\rightarrow X_{1}$
such that
\begin{itemize}
\item $Sp[X]=Sp[X_{1}]=Sp[X_{2}]$;

\item $k\left( X_{1}\right) =L_{1}, \, k\left( X_{2}\right) =L_{2}$;

\item $X_{2}$ is a finite \'{e}tale Galois cover of $X_{1}$ by $f$.
\end{itemize}
In such a case, $X_{2}/X_{1}$ are said to be a \textbf{$X$-geometric model} of the field extension $L_{2}/L_{1}$.
\end{definition}

\begin{remark}
It is seen that such geometric models are unique up to isomorphisms. It follows that the formally unramified extension above is well-defined. In fact, fixed any two geometric models $X_{2}/X_{1}$ and $Y_{2}/Y_{1}$ for the extension $L_{2}/L_{1}$, respectively. By \emph{Lemma 9.1} we must have isomorphisms $X_{2}\cong Y_{2}$ and $X_{1}\cong Y_{1}$, respectively. This is due to the preliminary fact that we have $$Aut(X_{sp}/X_{i})\cong Gal(k(X)^{sep}/L_{i})\cong Aut(X_{sp}/Y_{i})$$ for $i=1,2$.
\end{remark}

\begin{remark}
Suppose that $L_{3}/L_{2}$ and $L_{2}/L_{1}$ both are $X$-formally unramified extensions. Then $L_{3}/L_{1}$ must be $X$-formally unramified.
\end{remark}

\begin{remark}
Note that even for the case that $L_{1}$ and $L_{2}$ are both algebraic extensions of $K$, in general, the
formally unramified  defined in \emph{Definition 9.2} does not
coincide with  unramified that is defined in algebraic number theory.
\end{remark}

\begin{remark}
Note that we define another unramified extensions in \cite{An4*,An5,An8} for
arithmetic schemes, which is a generalization of unramified extensions in
algebraic number theory and hence is different from the above one defined in
\emph{Definition 9.2}.
\end{remark}

\begin{definition}
Let $X$ be an integral $K$-variety  over a field $K$.
 Set
\begin{quotation}
$k(X)^{au}\triangleq $ the smallest field containing all finite $X$-formally unramified
subextensions over $L$ contained in $L^{al}$.
\end{quotation}
The field $k(X)^{au}$ is said to be the \textbf{maximal formally unramified extension} of the function field $k(X)$.
\end{definition}

\begin{lemma}
Let $X$ be an integral $K$-variety and let $L\subseteq k(K)^{au}$ be a finite Galois extension of $k(X)$. Then there are the following statements.

$(i)$ $k(X)^{au}$ is an algebraic Galois extension of  $k(X)$. In particular, $k(X)^{au}$ is a subfield of $k(X)^{sep}$.

$(ii)$ There is a finite $X$-formally unramified Galois extension $M$ of $k(X)$ such that $M\supseteq L$.

$(iii)$ Let $M$ be a finite $X$-formally unramified Galois extension of $k(X)$ such that $M\supseteq L$. Then so is $M$ over $L$.

$(iv)$ $L$ is a finite $X$-formally unramified Galois extension of $k(X)$.
\end{lemma}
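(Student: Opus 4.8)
The plan is to handle the four parts in the order $(i)$--$(iv)$, passing freely between a finite $X$-formally unramified Galois extension and its geometric model, i.e.\ a finite \'{e}tale Galois cover of $X$ (\emph{Definition 9.2}); since every field in sight is algebraic over $k(X)$ with separable closure $k(X)^{sep}$, the sp-completion conditions $Sp[\cdot]=Sp[X]$ will be automatic by \emph{Lemma 8.9}. For $(i)$, \emph{Definition 9.7} presents $k(X)^{au}$ as the compositum inside $k(X)^{al}$ of the function fields of finite \'{e}tale Galois covers of $X$; each such field is finite separable over $k(X)$, so every element of $k(X)^{au}$ is separable algebraic, whence $k(X)^{au}\subseteq k(X)^{sep}$ and $k(X)^{au}/k(X)$ is algebraic. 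As a compositum of Galois extensions is again Galois, $k(X)^{au}/k(X)$ is Galois.

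For $(ii)$, write $L=k(X)(\alpha_{1},\dots,\alpha_{n})$ and choose for each $\alpha_{j}$ a finite $X$-formally unramified Galois extension $M_{j}\ni\alpha_{j}$; set $M=M_{1}\cdots M_{n}\supseteq L$, which is finite and Galois over $k(X)$. To see that $M$ is $X$-formally unramified I would realize its model as a connected component of the iterated fibre product $X_{M_{1}}\times_{X}\cdots\times_{X}X_{M_{n}}$ over $X$: fibre products and connected components of finite \'{e}tale covers are again finite \'{e}tale, so this component is a finite \'{e}tale Galois cover with function field $M$. Equivalently, one builds $M$ as a tower $k(X)\subseteq M_{1}\subseteq M_{1}M_{2}\subseteq\cdots$ and applies base-change stability of finite \'{e}tale morphisms together with the transitivity of \emph{Remark 9.4}.

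Parts $(iii)$ and $(iv)$ both flow from the quotient construction for the finite \'{e}tale Galois cover $X_{M}\to X$ with group $G=Gal(M/k(X))$. Put $H=Gal(M/L)\le G$; because the cover is \'{e}tale Galois the $G$-action is free, so the quotient map $X_{M}\to X_{M}/H$ is a finite \'{e}tale Galois cover with group $H$ and function field $M^{H}=L$. This gives $(iii)$: taking the models $X_{2}=X_{M}$ and $X_{1}=X_{M}/H$ exhibits $M$ as a finite $X$-formally unramified Galois extension of $L$. For $(iv)$, the further quotient map $X_{M}/H\to X$ is itself a finite \'{e}tale cover, being a quotient of a finite \'{e}tale Galois cover by a subgroup, and it is Galois precisely because $H\trianglelefteq G$, which holds since $L/k(X)$ is given to be Galois; with function field $L$ and base $X$ this is exactly a geometric model showing that $L$ is a finite $X$-formally unramified Galois extension of $k(X)$. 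Identifying $X_{M}/H$ with the qc cover of $X$ of function field $L$ supplied by \emph{Theorem 6.3} --- via the automorphism--Galois isomorphism of \emph{Theorem 7.4} and the uniqueness of qc covers --- keeps everything inside the paper's framework.

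The step carrying the real weight is the passage, inside the paper's qc/geometric-model formalism, to the quotient $X_{M}/H$ and the guarantee that both $X_{M}\to X_{M}/H$ and $X_{M}/H\to X$ remain finite \'{e}tale. The qc machinery of \S\S 5--7 a priori delivers only the affineness and the automorphism group of a cover (\emph{Theorem 7.4}), so the genuinely finite-\'{e}tale nature of these intermediate covers --- that is, that finite \'{e}tale covers of $X$ behave like the objects of a Galois category, closed under fibre products, connected components, and quotients by subgroups --- is the point that must be secured before the formal manipulations above go through.
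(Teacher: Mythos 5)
The single genuine gap in your proposal sits exactly where you decided the paper's bookkeeping was free: the claim that the conditions $Sp[\cdot]=Sp[X]$ demanded by \emph{Definition 9.2} are ``automatic by \emph{Lemma 8.9}.'' That lemma does not say this. It says that two integral $K$-varieties whose function fields share a separable closure have either equal or \emph{disjoint} sets of sp-completions --- a dichotomy, not an equality --- and \emph{Remark 12.1} warns against precisely the inference you make: for integral $K$-varieties, $k(Z_{1})=k(Z_{2})$ does not imply $Sp[Z_{1}]=Sp[Z_{2}]$ in general (this is exactly how the algebraic case differs from the arithmetic case of \emph{Theorem 9.15}). The sp-completion depends on the affine structure of the scheme, not merely on its function field. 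Consequently, for every scheme you manufacture --- the connected component $Z$ of $X_{M_{1}}\times_{X}\cdots\times_{X}X_{M_{n}}$ in $(ii)$, and the quotient $X_{M}/H$ in $(iii)$--$(iv)$ --- you still owe a proof that $Sp[Z]=Sp[X]$ and $Sp[X_{M}/H]=Sp[X]$; nothing in the standard theory of finite \'{e}tale covers supplies this, since it is a condition specific to the paper's framework. The natural repair is to run the universal construction of \S \emph{8.2} chart by chart: the affine charts of $Z$ (resp.\ $X_{M}/H$) are generated over the charts of $X$ by finitely many elements of $k(X)^{sep}$ together with their Galois conjugates, so adjoining a conjugate-closed generating set of $k(X)^{sep}$ produces the same rings $A_{V}=B_{V}[\Delta_{V}]$ as the sp-completion of $X$ itself, giving a common sp-completion. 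Until some such argument is inserted, your models are not known to satisfy \emph{Definition 9.2}, and $(ii)$--$(iv)$ remain open.

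That said, the \'{e}tale-theoretic and field-theoretic content of your argument is correct, and it is a genuinely different route from the paper's for $(iii)$--$(iv)$. You work in Galois-category style: quotient $X_{M}\to X_{M}/H$ for $H=Gal(M/L)$, freeness of the Galois action, and normality of $H$ when $L/k(X)$ is Galois. The paper instead writes everything in explicit affine coordinates $X=Spec(K_{0})$, $Spec(L_{0})$, $X_{M}=Spec(M_{0})$ with conjugate-closed generating sets, obtains $(iii)$ by base change of \'{e}tale morphisms (in effect your fibre-product-plus-component argument, in coordinates, with $Spec(L_{0})$ playing the role of $X_{M}/H$), and obtains $(iv)$ by a prime-over-prime counting argument in the style of algebraic number theory. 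Your treatment of $(ii)$ --- realizing the compositum $M_{1}\cdots M_{n}$ on a connected component of the iterated fibre product --- supplies an argument that the paper dismisses as ``clear,'' and your closing paragraph correctly identifies that closure of finite \'{e}tale covers under fibre products, components, and quotients is the load-bearing input; what it misidentifies is the sp-completion condition as a formality rather than as the one paper-specific obligation that actually has to be discharged.
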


\begin{proof}
$(i)$ It is immediate from preliminary facts on field theory.

$(ii)$ It is clear from the assumption that $L\subseteq k(K)^{au}$ holds.

$(iii)$ Take a geometric model $X_{M}/X$ for the extension $M/k(X)$. It reduces to the case that $X_{M}$ and $X$ are both affine schemes.

Suppose $$X=Spec(K_{0}), \, K_{0}=K[t_{1},t_{2},\cdots, t_{n}];$$
$$L=Fr(L_{0}), \, L_{0}=K[t_{1},t_{2},\cdots, t_{n},s_{1},s_{2},\cdots, s_{l}];$$
$$X_{M}=Spec(M_{0}), \, M_{0}=K[t_{1},t_{2},\cdots, t_{n},s_{1},s_{2},\cdots, s_{l},s_{l+1},\cdots, s_{m}].$$
Here, the elements $$t_{1},t_{2},\cdots, t_{n},s_{1},s_{2},\cdots, s_{l},s_{l+1},\cdots, s_{m}$$ are all contained in $k(X)^{au}$, and $$s_{1},s_{2},\cdots, s_{l}$$ and $$s_{1},s_{2},\cdots, s_{l},s_{l+1},\cdots, s_{m}$$ are both supposed to contain all conjugates over the field $k(X)$.

It is easily seen that $X_{M}$ is a finite \'{e}tale Galois cover of $Spec(L_{0})$ from base change of \'{e}tale morphisms.

$(iv)$ It reduces to consider affine schemes. Take $K_{0},L_{0},M_{0}$ as in $(iii)$ above.

Let $\mathfrak{P}$ be a maximal ideal of $M_{0}$. Just check what one has done in algebraic number theory. Put $$\mathfrak{p}=\mathfrak{P}\bigcap K_{0}.$$
Then $\mathfrak{p}$ is a maximal ideal of $K_{0}$.

Conversely, let $\mathfrak{p}$ be a maximal ideal of $K_{0}$. From definition for \'{e}tale morphisms, we have one and only one maximal ideal $\mathfrak{P}$ of $M_{0}$ that is over the maximal ideal $\mathfrak{p}$.

Likewise, there is one and only one maximal ideal $\mathfrak{P}_{0}$ of $L_{0}$ such that $\mathfrak{P}|\mathfrak{P}_{0}$ and $\mathfrak{P}_{0}|\mathfrak{p}$ hold.

It follows that $Spec(L_{0})$ must be unramified over $X$ and hence \'{e}tale over $X$.
This completes the proof.
\end{proof}

\subsection{Arithmetic unramified extension}

There is another type of unramified extensions, the arithmetic unramified extensions over the ring $\mathcal{O}_{K}$ of algebraic integers  of a number field $K$.

\textbf{Convention.} In this subsection, an \textbf{integral $\mathbb{Z}$-variety} is defined to be an integral scheme surjectively over $Spec(\mathbb{Z})$; an \textbf{arithmetic variety} is an integral scheme surjectively over $Spec(\mathbb{Z})$ of finite type.

Likewise, we have the following basic lemma.

\begin{lemma}
Let $X$ and $Y$ be two integral $\mathbb{Z}$-varieties satisfying the two conditions:
\begin{itemize}
\item $Sp[X]=Sp[Y]$ are equal sets.
\item $Aut(X_{sp}/X)\cong Aut(Y_{sp}/Y)$ are isomorphic groups.
\end{itemize}
Then $X$ and $Y$ are isomorphic schemes.
\end{lemma}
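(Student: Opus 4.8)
The plan is to run the proof of \emph{Lemma 9.1} essentially verbatim, since the entire \emph{sp}-completion apparatus of \S\emph{8} is set up for an arbitrary integral scheme and never uses that the base is a field: by the convention of this subsection an integral $\mathbb{Z}$-variety is just an integral scheme surjective over $Spec(\mathbb{Z})$, so \emph{Theorem 8.7} produces an \emph{sp}-completion $(X_{sp},\lambda_{X})$ of each such scheme, and every auxiliary result quoted in \emph{Lemma 9.1} remains available. First I would fix \emph{sp}-completions $(X_{sp},\lambda_{X})$ and $(Y_{sp},\lambda_{Y})$ of $X$ and $Y$. From the hypothesis $Sp[X]=Sp[Y]$ any \emph{sp}-completion of $X$ is also one of $Y$, so by \emph{Lemma 8.8} (all \emph{sp}-completions of a given scheme are essentially equal, hence isomorphic) I extract an isomorphism $t:X_{sp}\to Y_{sp}$. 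Writing $\sigma:Aut(X_{sp}/X)\to Aut(Y_{sp}/Y)$ for the given group isomorphism, the pair $(t,\sigma)$ supplies the raw material for the desired scheme isomorphism.

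Next I would exploit that $\lambda_{X}$ and $\lambda_{Y}$ are quasi-galois closed and that $k(X_{sp})=k(X)^{sep}$ is Galois over $k(X)$ (the last clauses of \emph{Theorem 8.7}), so \emph{Lemmas 7.2--3} apply and each fibre is a single orbit of the corresponding automorphism group:
\begin{equation*}
\lambda^{-1}_{X}(x_{0})=\{g(x_{0}):g\in Aut(X_{sp}/X)\},\quad
\lambda^{-1}_{Y}(t(x_{0}))=\{h(t(x_{0})):h\in Aut(Y_{sp}/Y)\}
\end{equation*}
for every $x_{0}\in X$. Transporting one orbit onto the other through $t$ and $\sigma$ defines a map $f_{sp}:X_{sp}\to Y_{sp}$ by $g(x_{0})\mapsto \sigma(g)(t(x_{0}))$, which descends to $f:X\to Y$ via $x_{0}=\lambda_{X}(g(x_{0}))\mapsto \lambda_{Y}(\sigma(g)(t(x_{0})))$, so that $f\circ\lambda_{X}=\lambda_{Y}\circ f_{sp}$. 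Since $t$ is an isomorphism of schemes and $\sigma$ an isomorphism of groups, $f_{sp}$ is bijective and compatible with the patchings of \emph{Theorem 8.7}, hence an isomorphism; passing to invariants through the sheaf identification $\mathcal{O}_{X}\cong(\lambda_{X})_{\ast}(\mathcal{O}_{X_{sp}})^{Aut(X_{sp}/X)}$ of \emph{Lemma 7.3} then forces $f$ to be an isomorphism as well.

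The step I expect to cost real work is confirming that $f_{sp}$ is \emph{well-defined} and a morphism of \emph{schemes}, not merely of underlying spaces: one must check that $g(x_{0})\mapsto \sigma(g)(t(x_{0}))$ is independent of the representative $g$ chosen for a point of the fibre (i.e. compatible with the point stabilisers coming from $Gal(k(X)^{sep}/k(X))$), and that it is continuous, open, and induces local ring isomorphisms on the \emph{qc} affine patchings. This is exactly where the arithmetic setting must be seen to behave like the geometric one; but because \emph{Lemmas 7.2--3}, \emph{Theorem 8.7} and \emph{Lemma 8.8} are all established for arbitrary integral schemes, no new phenomenon enters over $Spec(\mathbb{Z})$, and the verification is word-for-word that of \emph{Lemma 9.1}.
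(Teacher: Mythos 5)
Your proposal is correct and coincides with the paper's own treatment: the paper introduces this lemma with ``Likewise'' and gives no separate argument, the intended proof being exactly a verbatim repetition of the proof of \emph{Lemma 9.1} (fibres as orbits via \emph{Lemmas 7.2--3}, the isomorphism $t$ from $Sp[X]=Sp[Y]$ and \emph{Lemma 8.8}, then $f_{sp}:g(x_{0})\mapsto\sigma(g)(t(x_{0}))$ descending to $f$). Your added observation that the \S\emph{8} apparatus and the quoted lemmas never use the base field, so that nothing new happens over $Spec(\mathbb{Z})$, is precisely the justification the paper leaves implicit.
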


Fixed an integral $\mathbb{Z}$-variety $X$ over a field $K$. Let $L_{1}$ and $ L_{2}$ be two algebraic extensions over the function field $k(X)$, respectively.

\begin{definition}
The field $L_{2}$ is said to be a \textbf{finite $X$-unramified Galois extension}  over $L_{1}$ if there are two integral $\mathbb{Z}$-varieties
$X_{1}$ and $X_{2}$ and a surjective morphism $f:X_{2}\rightarrow X_{1}$
such that
\begin{itemize}
\item $Sp[X]=Sp[X_{1}]=Sp[X_{2}]$;

\item $k\left( X_{1}\right) =L_{1}, \, k\left( X_{2}\right) =L_{2}$;

\item $X_{2}$ is a finite \'{e}tale Galois cover of $X_{1}$ by $f$.
\end{itemize}
In such a case, $X_{2}/X_{1}$ are said to be a \textbf{$X$-geometric model} of the field extension $L_{2}/L_{1}$.
\end{definition}

\begin{remark}
Let $L_{1}\subseteq L_{2} \subseteq L_{3}$ be function fields over a number
field $K$. Suppose that $L_{2}/L_{1}$ and $L_{3}/L_{2}$ are $X$-unramified
extensions. Then $L_{3}$ is $X$-unramified over $L_{1}$.
\end{remark}

\begin{definition}
Let $X$ be an integral $\mathbb{Z}$-variety.
 Set
\begin{quotation}
$k(X)^{un}\triangleq $ the smallest field containing all finite $X$-unramified
subextensions over $L$ contained in $L^{al}$.
\end{quotation}
The field $k(X)^{un}$ is said to be the \textbf{maximal unramified extension} of the function field $k(X)$.
\end{definition}

\begin{lemma}
Let $X$ be an integral $\mathbb{Z}$-variety and let $L\subseteq k(K)^{un}$ be a finite Galois extension of $k(X)$. Then there are the following statements.

$(i)$ $k(X)^{un}$ is an algebraic Galois extension of  $k(X)$.

$(ii)$ There is a finite $X$-unramified Galois extension $M$ of $k(X)$ such that $M\supseteq L$.

$(iii)$ Let $M$ be a finite $X$-unramified Galois extension of $k(X)$ such that $M\supseteq L$. Then so is $M$ over $L$.

$(iv)$ $L$ is a finite $X$-unramified Galois extension of $k(X)$.
\end{lemma}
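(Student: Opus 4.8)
The plan is to mirror the four-part argument already carried out for the formally unramified case in \emph{Lemma 9.8}, replacing the basic lemma \emph{Lemma 9.1} by its arithmetic counterpart \emph{Lemma 9.9}, the notion of \emph{Definition 9.2} by that of \emph{Definition 9.10}, and $k(X)^{au}$ by $k(X)^{un}$. For part $(i)$, I would observe that, by \emph{Definition 9.12}, the field $k(X)^{un}$ is the compositum inside $L^{al}$ of all finite $X$-unramified Galois subextensions of $k(X)$, each of which is a finite Galois (in particular algebraic) extension of $k(X)$ arising as the function field of a finite \'{e}tale Galois cover in the sense of \emph{Definition 9.10}. A directed compositum of finite Galois extensions is again an algebraic Galois extension, so $k(X)^{un}/k(X)$ is algebraic Galois; this is immediate from preliminary facts on field theory. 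Part $(ii)$ is then essentially a reformulation of the definition: since $L$ is finite over $k(X)$ and $L\subseteq k(X)^{un}$, the field $L$ lies in a finite $X$-unramified Galois subextension $M\supseteq L$, exactly as in \emph{Lemma 9.8}$(ii)$, where transitivity (\emph{Remark 9.11}) ensures the relevant join remains $X$-unramified.

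For part $(iii)$ I would fix a geometric model $X_{M}/X$ for the extension $M/k(X)$, furnished by \emph{Definition 9.10}, and reduce to the affine situation exactly as in \emph{Lemma 9.8}$(iii)$ by passing to affine patches. Writing
\begin{equation*}
X=Spec(K_{0}),\quad L=Fr(L_{0}),\quad X_{M}=Spec(M_{0}),
\end{equation*}
where $M_{0}$ is generated over $K_{0}$ by elements together with all their conjugates over $k(X)$ and $L_{0}$ is the intermediate ring corresponding to $L$, the finite \'{e}tale Galois morphism $Spec(M_{0})\to Spec(K_{0})$ factors through $Spec(L_{0})$. The claim that $Spec(M_{0})\to Spec(L_{0})$ is again finite \'{e}tale Galois then follows from the stability of \'{e}tale morphisms under base change, just as in the formally unramified case.

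Part $(iv)$ is where the genuinely arithmetic content lies, and it is the step I expect to be the main obstacle: I must show that the intermediate cover $Spec(L_{0})\to X$ is unramified, and hence \'{e}tale, in the number-theoretic sense of \emph{Definition 9.10}, rather than merely formally. I would argue on maximal ideals: given a maximal ideal $\mathfrak{P}$ of $M_{0}$, set $\mathfrak{p}=\mathfrak{P}\bigcap K_{0}$ and $\mathfrak{P}_{0}=\mathfrak{P}\bigcap L_{0}$, obtaining a tower $\mathfrak{p}\mid\mathfrak{P}_{0}\mid\mathfrak{P}$. Since $Spec(M_{0})\to Spec(K_{0})$ is unramified, there is a unique prime of $M_{0}$ over each maximal ideal $\mathfrak{p}$ of $K_{0}$, with separable residue extension and trivial ramification index. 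The multiplicativity of ramification indices and residue degrees in the tower then forces $e(\mathfrak{P}_{0}/\mathfrak{p})=1$ together with separability of the residue extension at $\mathfrak{P}_{0}$, so $Spec(L_{0})\to X$ is unramified and therefore \'{e}tale. The delicate point is to ensure that unramifiedness of the total cover descends to the intermediate extension at every relevant prime: this requires the full multiplicativity of $e$ and $f$ in the tower of Dedekind-type extensions arising from the affine models, and care that nothing degenerates at the finitely many ramifying primes rather than holding only at the generic point.
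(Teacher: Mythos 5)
Your proposal is correct and follows essentially the same route as the paper: the paper's entire proof of this lemma is ``Repeat what we have done in proving \emph{Lemma 9.8},'' and your four parts mirror that proof step by step (field-theoretic compositum for $(i)$, the definition for $(ii)$, geometric model plus base change of \'{e}tale morphisms for $(iii)$, and the affine prime-ideal analysis for $(iv)$). The only divergence is in $(iv)$, where the paper's \emph{Lemma 9.8} argument invokes uniqueness of the maximal ideal of $M_{0}$ above $\mathfrak{p}$, whereas you use multiplicativity of $e$ and $f$ in the tower $\mathfrak{p}\mid\mathfrak{P}_{0}\mid\mathfrak{P}$ to descend unramifiedness to the intermediate cover --- a more standard (and arguably safer) way to carry out the same step.
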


\begin{proof}
Repeat what we have done in proving \emph{Lemma 9.8}.
\end{proof}

\begin{remark}
It is seen that for the case of an algebraic extension, the unramified
extension defined in \emph{Definition 9.12} coincides exactly with that in
algebraic number theory.
\end{remark}

It appears that unramified extensions for an arithmetic variety and for an algebraic $K$-variety have some common properties.
However, they are very different. For example, we have $$k(Spec(\mathbb{Z}))=\mathbb{Q}^{un}; \, k(Spec(\mathbb{Q}))=\mathbb{Q}^{sep}=\overline{\mathbb{Q}}. $$ In particular, for arithmetic varieties, we have a  stronger result such as the following.

\begin{theorem}
Let $X$ and $Y$ be two arithmetic varieties such that $k(X)=k(Y)$. Then $Sp[X]=Sp[Y]$ holds, i.e., $X$ and $Y$ have the same {sp}-completions; moreover, $X$ and $Y$ are isomorphic.
\end{theorem}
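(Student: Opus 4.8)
The plan is to produce a single scheme that simultaneously serves as an \emph{sp}-completion of $X$ and of $Y$, deduce $Sp[X]=Sp[Y]$ from the dichotomy in \emph{Lemma 8.9}, and then invoke the arithmetic basic lemma \emph{Lemma 9.9} to obtain the isomorphism. Since $k(X)=k(Y)$, the two function fields share a single separable closure $\Omega\triangleq k(X)^{sep}=k(Y)^{sep}$, so the hypothesis of \emph{Lemma 8.9} is met and we already know that either $Sp[X]=Sp[Y]$ or $Sp[X]\cap Sp[Y]=\emptyset$. Thus the whole argument reduces to exhibiting one common element of $Sp[X]$ and $Sp[Y]$.

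To this end I would first apply \emph{Theorem 8.7} to $X$, obtaining an \emph{sp}-completion $(Z,\lambda_{X})$ with $k(Z)=\Omega$, with $Z$ \emph{sp}-complete and essentially affine in $k(X)^{al}$, and with $Z$ quasi-galois closed over $X$ by $\lambda_{X}$. Being \emph{sp}-complete is a property of $Z$ alone (\emph{Definition 8.5}, \emph{Remark 8.11}) and does not refer to any morphism, so it remains available when $Z$ is viewed over $Y$. The real task is to build a surjective morphism $\lambda_{Y}\colon Z\to Y$ out of the field inclusion $k(Y)=k(X)\hookrightarrow k(Z)=\Omega$ and to check, via the criterion \emph{Theorem 5.6}, that $(Z,\lambda_{Y})$ is again an \emph{sp}-completion. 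Here the arithmetic hypothesis does the work: the affine charts of $Y$ are spectra of finitely generated $\mathbb{Z}$-subalgebras of $k(Y)=k(X)$, and the charts of $Z$ were constructed in \emph{Theorem 8.7} so as to contain all $Gal(\Omega/k(X))$-conjugates of the corresponding subrings; this lets the charts of $Z$ dominate those of $Y$ compatibly and makes $\lambda_{Y}$ affine and quasi-galois closed. Granting this, $Z\in Sp[X]\cap Sp[Y]$, so by \emph{Lemma 8.9} we conclude $Sp[X]=Sp[Y]$, which is the first assertion.

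For the second assertion I fix this common \emph{sp}-completion $Z$ and compute automorphism groups. Since $Z$ is quasi-galois closed over $X$ and $k(Z)=\Omega$ is (a separable closure, hence) Galois over $k(X)$, \emph{Lemma 7.2} gives $Aut(Z/X)\cong Gal(\Omega/k(X))$, and the identical argument over $Y$ gives $Aut(Z/Y)\cong Gal(\Omega/k(Y))$. Because $k(X)=k(Y)$ these two Galois groups are literally the same, so $Aut(Z/X)\cong Aut(Z/Y)$. Now both hypotheses of the arithmetic basic lemma \emph{Lemma 9.9} are in force --- equal \emph{sp}-completion sets and isomorphic automorphism groups --- and it yields at once that $X$ and $Y$ are isomorphic schemes.

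The main obstacle is the middle step: manufacturing $\lambda_{Y}$ and verifying that $Z$ is quasi-galois closed over $Y$. This is exactly the point at which $X$ and $Y$ being \emph{arithmetic} varieties, i.e.\ surjective and of finite type over $Spec(\mathbb{Z})$, is indispensable; the surjectivity over $Spec(\mathbb{Z})$ forces every chart of $Y$ to ``see'' the same integral structure that $Z$ carries, so that the function field alone pins the \emph{sp}-completion down. The contrast with algebraic $K$-varieties --- for which \emph{Theorems 2.2--2.5} must assume a common \emph{sp}-completion --- is precisely that this rigidity fails there, and I would expect the careful verification of quasi-galois closedness of $\lambda_{Y}$, rather than the two bookkeeping applications of \emph{Lemmas 7.2} and \emph{9.9}, to absorb essentially all of the effort.
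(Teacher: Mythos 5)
Your overall skeleton agrees with the paper's: the paper likewise reduces the theorem, via \emph{Lemma 9.9} (with the automorphism-group hypothesis supplied by \emph{Theorem 8.7} and \emph{Lemma 7.2}, exactly as you make explicit), to producing a common \emph{sp}-completion of $X$ and $Y$. The genuine gap is at the step you yourself flag and then skip (``Granting this\dots''): the construction of $\lambda_{Y}\colon Z\to Y$ and the verification that $(Z,\lambda_{Y})$ is an \emph{sp}-completion of $Y$. That step is not a technicality to be absorbed later; it is the entire content of the theorem, being the only place the arithmetic hypothesis can enter, and the justification you sketch for it does not hold. The charts of $Z$ produced by the construction in \emph{Theorem 8.7} are the rings $A_{V}=B_{V}[\Delta_{V}]$, where $B_{V}$ runs over the charts of $X$ and $\Delta_{V}$ consists of the $Gal(\Omega/k(X))$-conjugates of a fixed linear basis of $k(X)^{sep}$ over $k(X)$; nothing in that construction makes them contain the charts of $Y$. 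An affine chart of $Y$ is some finitely generated $\mathbb{Z}$-subalgebra $C\subseteq k(Y)=k(X)$ with $Fr(C)=k(X)$, and there is no reason such a $C$ embeds into any $B_{V}[\Delta_{V}]$: the only elements of $k(X)$ available there are $B_{V}$-polynomial expressions in the conjugate basis elements. So ``the charts of $Z$ dominate those of $Y$ compatibly'' is asserted, not proved, and does not follow from what you cite.

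The paper closes this gap by a mechanism your proposal never invokes: it uses that $X_{sp}$ and $Y_{sp}$ are \emph{qc over $Spec(\mathbb{Z})$ itself} (via the composites $X_{sp}\to X\to Spec(\mathbb{Z})$ and $Y_{sp}\to Y\to Spec(\mathbb{Z})$), and then, by \emph{Theorem 5.6}, the charts of such a scheme are pinned down canonically: they are rings generated over $\mathbb{Z}$ by $\{t_{1},\cdots,t_{n}\}\cup \bigl(\overline{\mathbb{Q}(t_{1},\cdots,t_{n})}\setminus \mathbb{Z}\bigr)$, the same collection for $X_{sp}$ as for $Y_{sp}$ because $k(X)=k(Y)$. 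Matching points through their prime ideals $j_{x}$ inside $\Omega$ then yields an isomorphism $f_{sp}\colon X_{sp}\to Y_{sp}$ directly (injectivity in the paper's \emph{Step 2}, surjectivity onto closed points in \emph{Step 3}), which is what a common \emph{sp}-completion means here. The crucial distinction is between quasi-galois closedness over $X$ (conjugation over the charts of $X$, which is all \emph{Theorem 8.7} gives you and all your argument uses) and quasi-galois closedness over $Spec(\mathbb{Z})$ (conjugation over $\mathbb{Z}$, which forces the maximal charts). Your variant --- making a single $Z$ serve as \emph{sp}-completion of both $X$ and $Y$ --- could plausibly be repaired with this stronger input, but as written the decisive step is missing and the sketch offered in its place rests on a misreading of what the charts of $Z$ contain.
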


\begin{proof}
By \emph{Lemma 9.9} it suffices to prove that $X$ and $Y$ have a common \emph{sp}-completion.
Fixed any \emph{sp}-completions $(X_{sp},\lambda_{X})$ of $X$ and $(Y_{sp},\lambda_{Y})$ of $Y$, respectively. It reduces to prove that $X_{sp}$ and $Y_{sp}$ are isomorphic.

In the following we will proceed in several steps to prove that there exists an isomorphism $$f_{sp}:X_{sp}\to Y_{sp}.$$

\emph{Step 1}. We have $\Omega\triangleq k(X)^{al}=k(Y)^{al}$. As $X_{sp}$ and $Y_{sp}$ are both \emph{qc} over $Spec(\mathbb{Z})$, it is seen that for any affine open set $U$ in $X$ there must be an affine open set $V$ in $Y$ such that $$f_{U}:U\to V$$ is an isomorphism of schemes which is induced from an isomorphism $$\sigma_{U}:\mathcal{O}_{Y}(V) \to \mathcal{O}_{X}(U)$$ between subrings of $\Omega$.

By the universal construction in \S \emph{8.2}, we have a morphism $$f_{sp}:X_{sp}\to Y_{sp}$$ such that there is the restriction $${{f_{sp}}|}_{U}=f_{U}$$ to each affine open set $U$ in $X$.

\emph{Step 2}. By \emph{Theorems 5.6,8.7}, assume that $\sigma_{U}$ is an identity map without loss of generality. It is seen that $f_{sp}$ is an injective morphism.

\emph{Step 3}. Take any closed point $y_{0}$ in $Y_{sp}$. Let $B(y_{0})\subseteq \Omega$ be a subring such that the affine open set $V(y_{0})=Spec(B(y_{0}))$ in $Y$ containing the point $y_{0}$. Denote by $j_{y_{0}}$ the prime ideal in $B(y_{0})$ corresponding to  $y_{0}$.

By \emph{Theorem 5.6} it is seen that $B(y_{0})$ is a ring over $\mathbb{Z}$ generated by the set
$$\{t_{1},\cdots,t_{n}\}\bigcup \Delta$$ where $t_{1},\cdots,t_{n}$ are variables over $\mathbb{Q}$, $\dim X=n$, and $$\Delta=\overline{\mathbb{Q}(t_{1},\cdots,t_{n})}\setminus \mathbb{Z}.$$ This is due to the fact that $Y_{sp}$ is \emph{qc} over $Spec(\mathbb{Z})$.

It is seen that such a prime ideal $j_{y_{0}}$ contains a unique prime $\mathfrak{P}\in \mathcal{O}_{K}$ over a prime $\mathfrak{p}\in \mathbb{N}$, where $\mathcal{O}_{K}$ is the ring of the algebraic integers of a number field $K$ and  $j_{y_{0}}$ is a maximal ideal of  the ring $B(y_{0})$ generated by a set $\Delta_{\mathfrak{P}}$ containing the subset $$\{\mathfrak{P}\}\bigcup \{t_{1},\cdots,t_{n}\}$$ of $\Delta$.

As $X_{sp}$ is \emph{qc} over $Spec(\mathbb{Z})$, we have a point $x_{0}\in X$ such that $$j_{x_{0}}=j_{y_{0}}.$$ Then we have $f_{sp}(x_{0})=y_{0}$.

This completes the proof.
\end{proof}

\begin{remark}
By\emph{Theorem 9.15} it is seen that  \emph{unramified extension}, the notion for arithmetic varieties  given in \cite{An4*,An5,An8}, as in \emph{Definition 9.12}, are  well-defined.
\end{remark}

\section{Algebraic Fundamental Groups}

In this section we will give the computation of algebraic fundamental groups.

\subsection{A universal cover for an \'{e}tale fundamental group}

For an integral $K$-variety $X$, let $k(X)^{au}$ denote the maximal formally unramified extension of the function field $k(X)$.

\begin{lemma}
For any integral $K$-variety $X$, there exists an
integral $K$-variety $X_{{et}}$ and a surjective morphism $
p_{X}:X_{et}\rightarrow X$ satisfying the properties:

\begin{itemize}
\item $p_{X}$ is affine;

\item $k\left( X_{et}\right) ={k(X)}^{au}$;

\item $X_{et}$ is qc over $X$ by $p_{X}$;

\item $k\left( X_{et}\right) $ is Galois over $k\left( X\right) ;$

\item $X_{et}$ is essentially affine in ${k(X)}^{au}$.
\end{itemize}
\end{lemma}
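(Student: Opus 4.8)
The plan is to reduce the lemma to the existence theorem for \emph{qc} covers, \emph{Theorem 6.3}, applied with base $Y=X$ and the field extension $L=k(X)^{au}$. The key preliminary input is \emph{Lemma 9.8(i)}, which asserts that the maximal formally unramified extension $k(X)^{au}$ is an algebraic Galois extension of $k(X)$. Thus $L=k(X)^{au}$ is a Galois extension of the function field $k(X)$, and—recalling that \emph{Lemma 6.2} does not require $L$ to be finitely generated over $k(X)$—the hypotheses of \emph{Theorem 6.3} are met.

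Applying \emph{Theorem 6.3} yields an integral $K$-variety, which I would call $X_{et}$, together with a surjective morphism $p_{X}:X_{et}\to X$ such that $k(X_{et})=k(X)^{au}$, the morphism $p_{X}$ is affine, and $X_{et}$ is \emph{qc} over $X$ by $p_{X}$. This settles the first three bullet points at once. The fourth bullet, that $k(X_{et})$ is Galois over $k(X)$, is then immediate: it is precisely the identity $k(X_{et})=k(X)^{au}$ combined with \emph{Lemma 9.8(i)}.

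The only point requiring care is the fifth bullet, and I expect it to be the main (if minor) obstacle. As stated, \emph{Theorem 6.3} delivers $X_{et}$ essentially affine in $\bigl(k(X)^{au}\bigr)^{al}$, whereas the lemma demands that it be essentially affine in $k(X)^{au}$ itself. To close this gap I would look inside the universal construction of \emph{Lemma 6.2} rather than merely quote the conclusion of the theorem. There each affine chart is a ring $A_{V}=B_{V}[\Delta_{V}]$, where $\Delta_{V}$ consists of the Galois conjugates $\sigma(w)$ for $\sigma\in Gal(k(X)^{au}/k(X))$ and $w$ ranging over a chosen basis $\Delta$ of $k(X)^{au}$ over $k(X)$. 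Since each such $\sigma$ carries $k(X)^{au}$ into itself, every generator $\sigma(w)$ lies in $k(X)^{au}$, so the rings $A_{V}$ are subrings of $k(X)^{au}$. Hence the reduced affine covering produced by the construction already has values in $k(X)^{au}$, which is exactly the statement that $X_{et}$ is essentially affine in $k(X)^{au}$ (and a fortiori in its algebraic closure). Moreover, because the extension $k(X)^{au}/k(X)$ is algebraic, the transcendental part of $\Delta$ is empty, so $\Delta$ is simply a linear basis and this generation is transparent. All five properties therefore hold simultaneously for the pair $(X_{et},p_{X})$.
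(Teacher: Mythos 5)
Your proposal is correct and takes essentially the same route as the paper: the paper's own proof of this lemma is precisely the universal construction of \emph{Lemma 6.2}/\emph{Theorem 6.3} rerun with $L=k(X)^{au}$ (Galois over $k(X)$ by \emph{Lemma 9.8}(i)), and its charts $A_{V}=B_{V}[\Delta_{V}]$ with $\Delta_{V}\subseteq k(X)^{au}$ give essential affineness in $k(X)^{au}$ exactly as in your inspection. The only cosmetic difference is that you quote the packaged \emph{Theorem 6.3} and then reopen its construction for the last bullet, whereas the paper simply repeats the construction from scratch.
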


Such an integral $K$-variety $X_{et}$ with a morphism $p_{X}$, denoted by $\left( X_{et},p_{X}\right) $, is called a \textbf{universal
cover} over $X$ for the \'{e}tale fundamental group $\pi _{1}^{et}\left(
X\right) $.

\begin{proof}
(\textbf{Universal Construction for the Cover})
By \emph{Lemma 6.1}, without loss of generality,
assume that $X$ has a reduced
affine covering $\mathcal{C}_{X}$ with values in $k(X)^{al}.$ Let $\mathcal{C}_{X}$ be maximal by set inclusion.

We will proceed in several steps:

\begin{itemize}
\item Fixed a set $\Delta $ of generators of the field ${k(X)}^{au}$ over $k(X)$.

\item For any local chart $\left( V,\psi _{V},B_{V}\right) \in \mathcal{C}_{X}$,
define $A_{V}=B_{V}\left[ \Delta _{V}\right] $, that is, $A_{V}$ over $B_{V}$ generated by the set $$\Delta _{V}=\{\sigma \left( x\right)
\in {k(X)}^{au}:\sigma \in Gal({k(X)}^{au}/k(X)),x\in \Delta \}. $$

Let $i_{V}:B_{V}\rightarrow A_{V}$
be the inclusion.

\item Assume that
\begin{equation*}
\Sigma =\coprod\limits_{\left( V,\psi _{V},B_{V}\right) \in \mathcal{C}
_{X}}Spec\left( A_{V}\right)
\end{equation*}
is the disjoint union. Let $\pi _{X}:\Sigma \rightarrow X$ be the
projection induced by the inclusions $i_{V}$.

\item Define an equivalence relation $R_{\Sigma }$ in $\Sigma $ in such a
manner:
\begin{quotation}
\emph{For any $x_{1},x_{2}\in \Sigma $, we say $x_{1}\sim x_{2}$ if and only if
$j_{x_{1}}=j_{x_{2}}$ holds in $L$, where $j_{x}$ denotes the corresponding prime ideal of $A_{V}$ to a point $x$ in $Spec\left( A_{V}\right) $.}
\end{quotation}

Let $X_{et}$ be the quotient space $\Sigma /\sim $ and let $
\pi_{{et}}:\Sigma \rightarrow X_{et}$ be the projection of
spaces.

\item Set a map $p_{X}:X_{{et}}\rightarrow X$ of spaces by $
\pi_{{et}}\left( z\right) \longmapsto \pi _{X}\left( z\right) $ for
each $z\in \Sigma $.

\item Suppose
\begin{equation*}
\mathcal{C}_{X_{{et}}}=\{\left( U_{V},\varphi _{V},A_{V}\right)
\}_{\left( V,\psi _{V},B_{V}\right) \in \mathcal{C}_{X}}.
\end{equation*}
Here $U_{V}=\pi _{X}^{-1}\left( V\right) $ and $\varphi
_{V}:U_{V}\rightarrow Spec(A_{V})$ is the identity map for each $\left(
V,\psi _{V},B_{V}\right) \in \mathcal{C}_{X}$.

\item There is a scheme, namely $X_{{et}}$, obtained by gluing the
affine schemes $Spec\left( A_{V}\right) $ for all $\left( U_{V},\varphi
_{V},A_{V}\right) \in \mathcal{C}_{X}$ with respect to the equivalence
relation $R_{\Sigma }$. Naturally, $p_{X}$ becomes a
morphism of schemes.
\end{itemize}

It is seen that $X_{{et}}$ and $p_{X}$ satisfy the properties.
This completes the proof.
\end{proof}

\subsection{A computation of \'{e}tale fundamental groups}

By \emph{Lemma 10.1} we have the following result.

\begin{theorem}
For any integral $K$-variety $X$, there exists a group isomorphism
\begin{equation*}
\pi _{1}^{et}\left( X\right) \cong Gal\left( {k(X)}^{au}/k\left( X\right)
\right).
\end{equation*}
\end{theorem}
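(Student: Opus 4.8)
The plan is to read off the isomorphism from the universal cover $(X_{et},p_{X})$ furnished by \emph{Lemma 10.1} and then invoke the Galois correspondence for \emph{qc} schemes. The algebraic half is immediate: by \emph{Lemma 10.1} the scheme $X_{et}$ is \emph{qc} over $X$ by $p_{X}$ and its function field $k(X_{et})=k(X)^{au}$ is canonically Galois over $k(X)$, so \emph{Lemma 7.2} applies verbatim and yields a group isomorphism
$$
Aut(X_{et}/X)\cong Gal\left(k(X)^{au}/k(X)\right).
$$
Hence everything reduces to identifying the \'{e}tale fundamental group $\pi_{1}^{et}(X)$ with the deck group $Aut(X_{et}/X)$ of this universal cover.

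To carry out that identification I would first recall that $\pi_{1}^{et}(X)$ is the profinite automorphism group of the fiber functor on finite \'{e}tale covers of $X$, equivalently the inverse limit $\varprojlim Aut(X'/X)$ over the finite \'{e}tale Galois covers $X'\to X$. By \emph{Lemma 9.8} each such cover is the geometric model of a finite $X$-formally unramified Galois extension $L/k(X)$ with $L\subseteq k(X)^{au}$, and conversely every finite Galois subextension of $k(X)^{au}/k(X)$ arises in this way; thus the system of finite \'{e}tale Galois covers is cofinal with the system of finite Galois subextensions of $k(X)^{au}$. For each layer $X_{L}\to X$ the scheme $X_{L}$ is \emph{qc} over $X$ with $k(X_{L})=L$ Galois over $k(X)$, so \emph{Lemma 7.2} gives $Aut(X_{L}/X)\cong Gal(L/k(X))$ compatibly with the transition maps on both sides.

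Passing to inverse limits then gives
$$
\pi_{1}^{et}(X)=\varprojlim_{L}Aut(X_{L}/X)\cong\varprojlim_{L}Gal(L/k(X))=Gal\left(k(X)^{au}/k(X)\right),
$$
the final equality being the standard expression of the Galois group of the algebraic Galois extension $k(X)^{au}/k(X)$ (algebraic and Galois by \emph{Lemma 9.8}(i)) as the limit over its finite Galois subextensions. This is consistent with, and reproves, the direct computation via $X_{et}$ above, since $X_{et}$ is precisely the filtered inverse limit of the covers $X_{L}$.

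The main obstacle is the universality/cofinality underlying the identification $\pi_{1}^{et}(X)\cong Aut(X_{et}/X)$: one must verify that the finite intermediate covers sitting between $X$ and $X_{et}$ are exactly the finite \'{e}tale Galois covers of $X$, so that $X_{et}$ represents the fiber functor as a pro-object. This is where \emph{Lemma 9.8}, matching finite formally unramified Galois extensions with finite \'{e}tale Galois covers, does the essential work; once the cofinality is granted, every remaining step is formal, with \emph{Lemma 7.2} converting each finite layer into a Galois group and the inverse limit assembling them into $Gal(k(X)^{au}/k(X))$.
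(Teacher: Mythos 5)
Your proposal is correct and matches the paper's own proof in all essentials: the paper likewise computes $\pi_{1}^{et}(X)$ as the inverse limit of $Aut(Z/X)$ over finite \'{e}tale Galois covers, uses \emph{Lemma 9.8} to exhibit a cofinal subsystem of \emph{qc} covers whose function fields are exactly the finite Galois subextensions of $k(X)^{au}/k(X)$, converts each automorphism group into a Galois group via the correspondence of \S \emph{7} (\emph{Lemma 7.2}), and passes to the limit. The only real difference is bookkeeping: the paper indexes its cofinal family $[X]_{au}$ by finite subsets $\alpha$ of a fixed generating set $\Delta$ of $k(X)^{au}$ over $k(X)$, building each layer $X_{\alpha}$ by the universal construction of \S \emph{10.1}, whereas you index directly by the finite Galois subextensions $L$ themselves.
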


\begin{proof}
Assume that $X$
has a reduced a reduced affine covering $\mathcal{C}_{X}$ with values in $k(X)^{al}$ without loss of generality.

Let $\Delta \subseteq k(X)^{au}\setminus
k\left( X\right) $ be a set of generators of the field $k(X)^{au}$ over $k(X) $. Put $$I=\{\text{finite subsets of }\Delta \}.$$

We will proceed in several steps to give the proof.

\emph{Step 1.} Fixed any ${\alpha }$ in $I$. Repeating the universal construction in \S \emph{10.1}
for $\alpha $, i.e., replacing $\Delta$ by $\alpha$, we have an integral $K$-variety $X_{\alpha
}$ and a surjective morphism $f_{\alpha }:X_{\alpha }\rightarrow X$
satisfying the properties:
\begin{itemize}
\item $f_{\alpha }$ is affine;

\item $k\left( X_{\alpha }\right) \subseteq k\left( X\right) ^{au}$;

\item $X_{\alpha }$ is \emph{qc} over $X$ by $f_{\alpha }$;

\item $k\left( X_{\alpha }\right) $ is Galois over $k\left( X\right)$;

\item $X_{\alpha}$ is essentially affine in $k(X)^{au}$.
\end{itemize}

\emph{Step 2.} Let $\alpha \subseteq \beta$ be in $I$.
 By \emph{Step 1} we have integral $K$-varieties $X_{\alpha }$
and $X_{\beta }$ which are \emph{qc} over $X$, respectively. There is a surjective morphism $f_{\alpha }^{\beta }:X_{\beta } \to X_{\alpha }$
satisfying the properties:
\begin{itemize}
\item $f_{\alpha }^{\beta }$ is affine;

\item $f_{\beta }=f_{\alpha }\circ f_{\alpha }^{\beta }$;

\item $X_{\beta}$ is \emph{qc} over $X_{\alpha}$ by $f_{\alpha }^{\beta }$;

\item $k\left( X_{\beta }\right) $ is Galois over $k\left( X_{\alpha}\right)$.
\end{itemize}
Here
$f_{\alpha }^{\beta }$ is obtained in a canonical manner similar to $f_{\alpha }$.

It is clear that there is a $\gamma $ in $I$ such that $\gamma \supseteq
\alpha $ and $\gamma \supseteq \beta .$ Hence, we have an integral $K$-variety $X_{\gamma}$ that is \emph{qc} over $X_{\beta }$ and over $X_{\alpha }$, respectively.

\emph{Step 3.} For any $\alpha ,\beta $ in $I$, we say $\alpha \leq \beta $
if and only if $\alpha \subseteq \beta .$ Then $I$ is a partially
ordered set.

Hence, $\{k\left( X_{\alpha }\right) ;i_{\alpha }^{\beta
}\}_{\alpha \in I}$ is a direct system of groups, where each $$i_{\alpha
}^{\beta }:k\left( X_{\alpha }\right) \rightarrow k\left( X_{{\beta }
}\right) $$ is a homomorphism of fields canonically induced by $f_{\alpha }^{\beta }$.

Let $(X_{et},p_{X})$ be a universal cover for $\pi_{1}^{et}(X)$.
For the fields, we have $$k\left( X_{et}\right)=k(X)^{au} ={\lim_{\longrightarrow}}_{\alpha \in I}
k\left( X_{\alpha }\right) .$$
For the Galois groups, we have
$$Gal\left( k\left( X_{et}\right) /k\left( X\right) \right) \cong
{\lim_{\longleftarrow}} _{\alpha \in I}Gal\left( k\left( X_{\alpha }\right)
/k\left( X\right) \right) .$$

\emph{Step 4.} Let $$[X]_{au} =\{X_{\alpha }:\alpha \in
I\}.$$

Then $[X]_{au}$ is a directed set. Here for any $X_{\alpha },X_{\beta }\in [X]_{au}$, we say $$X_{\alpha
}\leq X_{\beta }$$ if and only if $X_{\beta }$ is \emph{qc}
over $X_{\alpha }$.

Fixed a geometric point $s$ of $X$ over $k(X)^{al}.$ Put
$$
[X]_{et}=
\{\text{finite \'{e}tale Galois covers of } X \text{
over }s\}.
$$

Then $[X]_{et}$ is a directed set. Here for
any $X_{1},X_{2}\in [X]_{et}$, we say $$X_{1}\leq X_{2}$$
if and only if $X_{2}$ is a finite \'{e}tale Galois cover over $X_{1}.$

\emph{Step 5.} Fixed any $X_{\alpha },X_{\beta }\in [X]_{au}$.

It is seen that $X_{\alpha }$ and $X_{\beta}$ both are finite \'{e}tale Galois covers of $X$ by \emph{Lemma 9.8}.

Let $X_{\beta }$ be
\emph{qc} over $X_{\alpha }$. Then $X_{\beta }$ is a \'{e}tale finite Galois cover of
$X_{\alpha }$ from \emph{Lemma 9.8} again.

Hence,  $[X]_{au}$ is a directed subset of $[X]_{et}$.

\emph{Step 6.} Let
$Z\in [X]_{et}$. We have $k\left( Z\right) \subseteq k\left( X\right) ^{au}.$ It is seen that $k\left(
Z\right) $ is a finite unramified Galois extension of $k\left( X\right) $.

Let $\alpha \subseteq k\left( Z\right) \setminus k\left( X\right) $ be a set
of generators of the field $k\left( Z\right) $ over $k\left( X\right) .$ As $\alpha \in I$ is finite and $\Delta $ is infinite, there is a finite set $\beta \in I$
such that $$\alpha \subsetneqq \beta \subsetneqq \Delta .$$

We have $$X_{\beta }\in [X]_{au}$$ such that $X_{\beta }$
is \emph{qc} over $Z.$

Hence, $[X]_{au}$ is a co-final directed subset in $X_{et} $.

\emph{Step 7.} Now by \emph{Steps 1-6} above we have
\begin{equation*}
\begin{array}{l}
\pi _{1}^{et}\left( X\right)\\

 ={\lim_{\leftarrow}} _{Z\in [X]_{et}}Aut\left( Z/X\right) \\

\cong {\lim_{\leftarrow }}_{Z\in [X]_{au}}Aut\left(
Z/X\right) \\

\cong {\lim_{\leftarrow }}_{Z\in [X]_{au}}Gal\left(
k\left( Z\right) /k\left( X\right) \right) \\

={\lim_{\leftarrow}} _{\alpha \in I}Gal\left( k\left( X_{\alpha }\right)
/k\left( X\right) \right) \\

\cong Gal\left( k\left( X_{et}\right) /k\left( X\right) \right) \\

=Gal\left( k\left( X\right) ^{au}/k\left( X\right) \right) .
\end{array}
\end{equation*}

This completes the proof.
\end{proof}

\begin{remark}
Let $X$ be an integral $K$-variety. From \emph{Theorem 10.2} we have $$\pi _{1}^{et}\left( X_{et}\right)\cong \{0\}.$$ For example, let $X=Spec(\mathbb{Q})$. We have $X_{et}=Spec(\overline{\mathbb{Q}})$ and hence $$\pi _{1}^{et}\left( Spec(\overline{\mathbb{Q}})\right)\cong \{0\}.$$
\end{remark}

\subsection{A prior estimate of the \'{e}tale fundamental group}

In this subsection we will introduce  the \emph{qc} fundamental group of an algebraic variety.  We will prove that the \'{e}tale fundamental group is a normal subgroup of the \emph{qc} fundamental group.

Fixed an algebraic $K$-variety $X$. Let $\Omega$ be the separable closure of a separably generated extension  of the function field $k\left( X\right) $.

Define $$[X;\Omega]_{qc}$$ to be the set of algebraic $K$-varieties $Z$ satisfying the conditions:
\begin{itemize}
\item $k(Z)$ is contained in $\Omega $;

\item There is a
surjective morphism $f:Z\rightarrow X$ of finite type such that $Z$ is
\emph{qc} over $X.$
\end{itemize}

In \cite{An6}, we require such an additional condition that
\begin{quotation}
\emph{$Z$ has a reduced affine covering with values in $\Omega $}.
\end{quotation}
However, from \emph{Lemma 6.1} it is seen that there is no essential difference between the two conditions.

There are preliminary facts on the set $[X;\Omega]_{qc}$ such as the following.

\begin{lemma}
For any $Z_{1},Z_{2}\in [X;\Omega]_{qc}$, there is a third $Z_{3}\in [X;\Omega]_{qc}$ such that $Z_{3}$ is qc over $Z_{1}$ and $Z_{2},$ respectively.
\end{lemma}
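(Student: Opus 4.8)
The plan is to realize $Z_{3}$ as a single quasi-galois closed cover whose function field is the compositum $L\triangleq k(Z_{1})\cdot k(Z_{2})$ formed inside $\Omega$, and to produce it by the universal construction of \emph{Lemma 6.2} run with a generating set adapted to $Z_{1}$ and $Z_{2}$ simultaneously. Write $f_{i}:Z_{i}\to X$ for the given surjective finite-type morphisms. Since $Z_{1},Z_{2}$ are of finite type over $X$, the fields $k(Z_{1}),k(Z_{2})$ are finitely generated over $k(X)$, hence so is $L$; and as $L\subseteq\Omega$ is separable over $k(X)$, a finitely generated separable extension, $L$ is separably generated over $k(X)$. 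By \emph{Lemma 7.1} each $k(Z_{i})$ is quasi-galois over $k(X)$, and one checks via \emph{Lemma 4.3} that the compositum of two quasi-galois subfields of $\Omega$ is again quasi-galois; therefore $L$ is quasi-galois and separably generated over $k(X)$, so $L$ is Galois over $k(X)$ by \emph{Theorem 4.1}. Being quasi-galois and separably generated over each intermediate field $k(Z_{i})$ as well, $L$ is likewise Galois over $k(Z_{1})$ and over $k(Z_{2})$ by \emph{Theorem 4.1} again.

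First I would apply \emph{Theorem 6.3} (that is, the construction of \emph{Lemma 6.2}) to the pair $(X,L)$ to obtain an integral $K$-variety $Z_{3}$ with a surjective affine morphism $f_{3}:Z_{3}\to X$ such that $k(Z_{3})=L$, $Z_{3}$ is \emph{qc} over $X$ by $f_{3}$, and $Z_{3}$ is essentially affine in $L^{al}$. Because $L$ is finitely generated over $k(X)$ and $X$ is an algebraic $K$-variety, $f_{3}$ is of finite type and $Z_{3}$ is again an algebraic $K$-variety; together with $k(Z_{3})=L\subseteq\Omega$ this places $Z_{3}\in[X;\Omega]_{qc}$. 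The one freedom I exploit is the choice of generators: I take $\Delta=\Delta^{(1)}\cup\Delta^{(2)}$, where $\Delta^{(i)}$ generates $k(Z_{i})$ over $k(X)$, so that for each chart $(V,\psi_{V},B_{V})$ of $X$ the ring $A_{V}=B_{V}[\Delta_{V}]$ assembled from the full $Gal(L/k(X))$-orbits contains the structure rings of both $Z_{1}$ and $Z_{2}$ over the image charts. By \emph{Lemma 5.1} the resulting affine patching $\{(U_{V},\varphi_{V},A_{V})\}$ is the unique maximal affine covering of $Z_{3}$ with values in $\Omega$.

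Next I would produce the comparison morphisms and verify the \emph{qc} property over each base. For $\sigma\in Gal(L/k(X))$ one has $\sigma(B_{V})=B_{V}$ (as $B_{V}\subseteq k(X)$) and $\sigma(\Delta_{V})=\Delta_{V}$ (as $\Delta_{V}$ is a union of full orbits), so each $A_{V}$ is stable under $Gal(L/k(X))$, hence under the subgroup $Gal(L/k(Z_{i}))$. Consequently $A_{V}$ is quasi-galois over its invariant subring $C_{V}^{(i)}\triangleq A_{V}^{Gal(L/k(Z_{i}))}$, which by \emph{Lemma 7.3} (the invariant-subsheaf description of the base) is exactly the structure ring of $Z_{i}$ over the image chart; the inclusions $C_{V}^{(i)}\hookrightarrow A_{V}$ glue to a surjective affine morphism $g_{i}:Z_{3}\to Z_{i}$ with $f_{i}\circ g_{i}=f_{3}$. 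Since $A_{V}$ then has one and only one conjugate over $C_{V}^{(i)}$, the criterion of \emph{Lemma 5.5} (equivalently the equivalence in \emph{Theorem 5.6}) applies to the unique maximal affine patching above and yields that $Z_{3}$ is \emph{qc} over $Z_{i}$ by $g_{i}$. Taking $i=1,2$ gives the assertion.

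The main obstacle I anticipate is the rigidity step: checking that the single affine structure $\{A_{V}\}$ simultaneously witnesses the \emph{qc} property over all three bases, i.e. that $A_{V}^{Gal(L/k(Z_{i}))}$ coincides with the structure ring of $Z_{i}$ and that the $Gal(L/k(X))$-orbits restrict correctly along $Gal(L/k(X))\twoheadrightarrow Gal(k(Z_{i})/k(X))$. This is precisely where the essentially-affine (``measurable'') bookkeeping of \emph{Definition 3.9} and the uniqueness of the maximal affine covering from \emph{Lemma 5.1} must be invoked, and where \emph{Remark 6.5} cautions that uniqueness of a \emph{qc} cover by its function field alone is unavailable; the compatible choice of $\Delta=\Delta^{(1)}\cup\Delta^{(2)}$ is exactly what sidesteps that difficulty.
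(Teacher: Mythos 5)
Your strategy---feed the compositum $L=k(Z_{1})\cdot k(Z_{2})$ and the union $\Delta^{(1)}\cup\Delta^{(2)}$ of generating sets into the universal construction of \emph{Lemma 6.2}---is the route the paper itself intends: its entire proof of this lemma is a pointer to the method of \emph{Theorem 10.2}, whose Step 2 compares covers built from nested generating sets in exactly this way. The gap is in the step that carries all the weight: producing the morphisms $g_{i}:Z_{3}\to Z_{i}$ onto the \emph{given} schemes $Z_{i}$ and verifying the qc property over them. Your construction uses only the function fields $k(Z_{i})$, and two things go wrong. First, the claim that $A_{V}=B_{V}[\Delta_{V}]$ contains the structure rings of $Z_{1}$ and $Z_{2}$ over the image charts is unjustified when $\Delta^{(i)}$ is a set of \emph{field} generators of $k(Z_{i})$ over $k(X)$: a chart ring $\mathcal{O}_{Z_{i}}(f_{i}^{-1}(V))$ is an essentially arbitrary finitely generated $B_{V}$-algebra with fraction field $k(Z_{i})$, and it need not lie in the ring generated by $B_{V}$ and the Galois orbit of a few field generators (denominators are the obvious obstruction). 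Second, the identification $A_{V}^{Gal(L/k(Z_{i}))}=\mathcal{O}_{Z_{i}}(f_{i}^{-1}(V))$ ``by \emph{Lemma 7.3}'' is circular: that lemma describes the \emph{base} of a morphism already known to be qc---applied to $f_{i}:Z_{i}\to X$ it yields $\mathcal{O}_{X}\cong (f_{i})_{\ast}(\mathcal{O}_{Z_{i}})^{Aut(Z_{i}/X)}$, the wrong direction---while applying it to the pair $(Z_{3},Z_{i})$ presupposes exactly the qc morphism $g_{i}$ being constructed. What the invariant rings actually cut out is \emph{some} integral scheme with function field $k(Z_{i})$ dominated by $Z_{3}$; by \emph{Remark 6.5}, which you cite, a qc cover of $X$ inside $\Omega$ is not determined by its function field, so nothing forces that scheme to coincide with $Z_{i}$, and taking $\Delta=\Delta^{(1)}\cup\Delta^{(2)}$ does not sidestep this because it is still purely function-field data. (The assertion that a compositum of quasi-galois subfields of $\Omega$ is again quasi-galois is also left unverified; in this framework, where conjugations involve transcendence bases, it is not a one-line consequence of \emph{Lemma 4.3}.)

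The repair stays inside your own outline but must use scheme-level data. Since $X$ is an algebraic $K$-variety it is quasi-compact, and each $f_{i}$ is affine by \emph{Theorem 7.4}, so $Z_{i}$ is covered by finitely many charts $Spec(C^{(i)}_{V_{j}})$ with $C^{(i)}_{V_{j}}=\mathcal{O}_{Z_{i}}(f_{i}^{-1}(V_{j}))$ finitely generated over $B_{V_{j}}$. Choose $\Delta^{(i)}$ to be a finite set of \emph{ring} generators of these actual chart rings rather than field generators of $k(Z_{i})$. Then $C^{(i)}_{V_{j}}\subseteq A_{V_{j}}$ holds by construction, the inclusions glue to surjective morphisms $g_{i}:Z_{3}\to Z_{i}$ with $f_{i}\circ g_{i}=f_{3}$, and your orbit-stability argument (each $A_{V}$ is stable under $Gal(L/k(Z_{i}))\subseteq Gal(L/k(X))$, hence admits a single conjugate over $C^{(i)}_{V_{j}}$) then legitimately feeds into \emph{Lemma 5.5} and \emph{Theorem 5.6}. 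With that replacement, together with an actual verification that $L$ is Galois over $k(X)$ and over each $k(Z_{i})$, your sketch becomes a proof.
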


\begin{lemma}
Let $Z_{1},Z_{2},Z_{3}\in [X;\Omega]_{qc}$. Suppose that $Z_{2}$ is qc over $Z_{1}$ and $Z_{3}$ is qc over
$Z_{2}$. Then $Z_{3}$ is qc over $Z_{1}$.
\end{lemma}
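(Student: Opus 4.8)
The plan is to reduce the scheme-theoretic transitivity to the single assertion that the \emph{intrinsic} maximal affine covering of $Z_3$ is quasi-galois closed over $Z_1$, and to manufacture the required conjugate charts by realizing conjugations as scheme automorphisms. Write $g\colon Z_2\to Z_1$ and $h\colon Z_3\to Z_2$ for the two given \emph{qc}-morphisms and $p=g\circ h\colon Z_3\to Z_1$ for their composite; these are compatible with the structural morphisms to $X$, so that $k(X)\subseteq k(Z_1)\subseteq k(Z_2)\subseteq k(Z_3)\subseteq\Omega$. First I would record the field-theoretic input. Since each $Z_i$ lies in $[X;\Omega]_{qc}$ it is \emph{qc} over the common base $X$, so by \emph{Lemma 7.1} the field $k(Z_3)$ is quasi-galois over $k(X)$. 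Because \emph{Definition 3.2} demands the splitting condition over \emph{every} intermediate field, and $k(X)\subseteq k(Z_1)\subseteq k(Z_3)$, this property descends verbatim to give that $k(Z_3)$ is quasi-galois over $k(Z_1)$. Moreover $\Omega$ is separable over a separably generated extension of $k(X)$, so the finitely generated subextension $k(Z_3)/k(Z_1)$ is separable, hence separably generated; by \emph{Theorem 4.1} it is Galois, and the same reasoning makes $k(Z_3)/k(X)$ Galois.

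Next I would set up the scheme-level criterion. By \emph{Lemma 5.1} the scheme $Z_3$, being \emph{qc} over $X$, carries a \emph{unique} maximal affine covering $\mathcal{C}_{Z_3}$ with values in $\Omega$, and this covering is quasi-galois closed over $X$; it is intrinsic to $Z_3$ and independent of the chosen base. By the equivalence in \emph{Theorem 5.6} it then suffices to check that this very covering is quasi-galois closed over $Z_1$ by $p$. Since $k(Z_3)/k(X)$ is Galois and $Z_3$ is \emph{qc} over $X$ by a finite-type surjection, \emph{Theorem 7.5} (through \emph{Lemma 7.2}) supplies a group isomorphism $\mathrm{Aut}(Z_3/X)\cong\mathrm{Gal}(k(Z_3)/k(X))$, under which a scheme automorphism permutes the charts of $\mathcal{C}_{Z_3}$ by transporting their coordinate rings.

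The verification then runs as follows. Fix an affine open $V_1\subseteq Z_1$, a chart $U_3\in\mathcal{C}_{Z_3}$ with $U_3\subseteq p^{-1}(V_1)$, put $A_3=\mathcal{O}(U_3)$, let $B_1\subseteq k(Z_3)$ be the image of $\mathcal{O}_{Z_1}(V_1)$, and let $A_3'=\sigma(A_3)$ be any conjugate of $A_3$ over $B_1$. The accompanying $\sigma$ is a conjugation of $k(Z_3)$ over $k(Z_1)$, and since $k(Z_3)/k(Z_1)$ is Galois there is a unique such conjugation (\emph{Lemma 4.3}), forcing $\sigma\in\mathrm{Gal}(k(Z_3)/k(Z_1))\subseteq\mathrm{Gal}(k(Z_3)/k(X))$. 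Lifting $\sigma$ through the isomorphism above yields $\tilde\sigma\in\mathrm{Aut}(Z_3/X)$; as $\sigma$ fixes $k(Z_1)$, the morphisms $p\circ\tilde\sigma$ and $p$ induce the same inclusion $k(Z_1)\hookrightarrow k(Z_3)$ and so coincide by the rigidity of such morphisms (the injectivity argument in \emph{Lemma 7.2}), whence $\tilde\sigma\in\mathrm{Aut}(Z_3/Z_1)$. Setting $U_3'=\tilde\sigma(U_3)\in\mathcal{C}_{Z_3}$ gives $\mathcal{O}(U_3')=\sigma(A_3)=A_3'$ and $p(U_3')=p(U_3)\subseteq V_1$, i.e. $U_3'\subseteq p^{-1}(V_1)$. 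This is precisely the quasi-galois closed condition of \emph{Definition 3.8} over $Z_1$, so \emph{Theorem 5.6} gives that $Z_3$ is \emph{qc} over $Z_1$.

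The hard part, conceptually, is that \emph{qc}-ness is \emph{not} transitive in the naive sense: it encodes a normality property of function fields, and normality fails transitivity in towers, as $\mathbb{Q}(\sqrt[4]{2})/\mathbb{Q}(\sqrt{2})/\mathbb{Q}$ already shows. The crucial point is that membership in $[X;\Omega]_{qc}$ forces \emph{all three} schemes to be \emph{qc} over the single base $X$; it is the bottom field $k(X)$, not the intermediate $k(Z_2)$, that rescues transitivity, because quasi-galoisness over $k(X)$ automatically passes to every intermediate subfield. On the scheme side the only genuine subtlety is to place the conjugate chart $U_3'$ inside $p^{-1}(V_1)$ rather than merely inside the larger preimage over $X$; representing the conjugation by an element of $\mathrm{Aut}(Z_3/Z_1)$, which preserves the fibres of $p$, settles this immediately. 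A direct alternative, producing $U_3'$ from quasi-galois closedness over $X$ and observing that $\sigma$ fixes $B_1$ so that $B_1\subseteq A_3'$ forces $U_3'\subseteq p^{-1}(V_1)$, also works, but it requires fussing over a choice of affine $V_X\subseteq X$ with $a(V_1)\subseteq V_X$, which the automorphism description avoids.
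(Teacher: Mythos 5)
Your argument is correct at the paper's own level of rigor, but it takes a genuinely different route, because the paper in fact offers no proof of this lemma at all: it only remarks that Lemmas 10.4--5 ``can be proved in a manner similar to'' the proof of Theorem 10.2, i.e.\ by re-running the universal construction of \emph{qc} covers (rings generated by Galois orbits of generator sets) and reading qc-ness off the explicitly constructed charts, exactly as is asserted without justification in Step 2 of that proof. Your route is intrinsic rather than constructive. Its crux --- that quasi-galoisness of $k(Z_3)$ over the \emph{bottom} field $k(X)$ (Lemma 7.1) descends verbatim to every intermediate field because Definition 3.2 quantifies over all intermediate subfields, so that $k(Z_3)/k(Z_1)$ is quasi-galois and hence Galois by Theorem 4.1 once separable generation is checked --- is precisely the point the paper never articulates, and your example $\mathbb{Q}(\sqrt[4]{2})/\mathbb{Q}(\sqrt{2})/\mathbb{Q}$ correctly explains why it is the common base $X$, not the tower through $Z_2$, that rescues transitivity. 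The chart-level verification (lift the conjugation $\sigma$ to $\tilde\sigma\in\mathrm{Aut}(Z_3/X)$ via the isomorphism of Lemma 7.2, show $\tilde\sigma\in\mathrm{Aut}(Z_3/Z_1)$ by rigidity, transport $U_3$ to $U_3'=\tilde\sigma(U_3)\subseteq p^{-1}(V_1)$, and conclude by Theorem 5.6) is a sound use of the paper's criteria, and it exposes a strengthening the paper's phrasing hides: the qc hypotheses on $Z_2$ enter only to produce the composite surjection $p=g\circ h$. Two caveats, both inherited from the paper rather than introduced by you: the maximal covering should be taken with values in $k(Z_3)^{al}$ as in Lemma 5.1, since the ambient $\Omega$ of $[X;\Omega]_{qc}$ need not be an algebraic closure of $k(Z_3)$ (harmless here, because every conjugation of $k(Z_3)$ over $k(Z_1)$ equals $k(Z_3)$ itself); and the rigidity step ``equal function-field maps imply equal morphisms'' tacitly uses separatedness, just as the paper's own injectivity argument in Lemma 7.2 does. (Also note the citation ``Theorem 7.5'' should read Theorem 7.4.)
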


Here, \emph{Lemmas 10.4-5} above can be proved in a manner similar to what we have done for the proof of \emph{Theorem 10.2}.

Set a partial order $\leq$ in the set $[X;\Omega]_{qc}$ in such
a manner:
\begin{quotation}
\emph{For any $Z_{1},Z_{2}\in [X;\Omega]_{qc}$, we say
$$
Z_{1}\leq Z_{2}
$$
if and only if there is a surjective morphism $\varphi :Z_{2}\rightarrow Z_{1}$ of
finite type such that $Z_{2}$ is qc over $Z_{1}.$}
\end{quotation}

By \emph{Lemmas 10.4-5} it is seen that $[X;\Omega]_{qc}$ is a directed set and
$$
\{Aut\left( Z/X\right) :Z\in [X;\Omega]_{qc} \}
$$
is an inverse system of groups.

Now we introduce the following definition.

\begin{definition}
Let $X$ be an algebraic $K$-variety. Suppose that $\Omega$ is the separable closure of a separably generated extension of $k\left( X\right) $. The
inverse limit
$$
\pi _{1}^{qc}\left( X;\Omega \right) \triangleq {\lim_{\longleftarrow}}
_{Z\in [X;\Omega]_{qc}}{Aut\left( Z/X\right)}
$$
of the inverse system $\{Aut\left( Z/X\right) :Z\in [X;\Omega]_{qc} \}$ of groups is said to be the \textbf{\emph{qc} fundamental group} of  $X$ with coefficients in $\Omega $.
\end{definition}

We have the following result on the \emph{qc} fundamental group, which will be prove in \S \emph{10.5}.

\begin{theorem}
Let $X$ be an algebraic $K$-variety. Suppose that $\Omega$ is the separable closure of a separably generated extension of $k(X)$. There are the
following statements.

$\left( i\right) $ There is a group isomorphism
\begin{equation*}
\pi _{1}^{qc}\left( X;\Omega \right) \cong Gal\left( {\Omega }/k\left(
X\right) \right) .
\end{equation*}

$\left( ii\right) $ There is a group isomorphism
$$
\pi _{1}^{et}\left( X;s\right) \cong \pi _{1}^{qc}\left( X;\Omega \right)
_{et}
$$ for a geometric point $s$ of $X$ over $\Omega $,
where $\pi _{1}^{qc}\left( X;\Omega \right) _{et}$ is a normal subgroup of $\pi
_{1}^{qc}\left( X;\Omega \right) $.
\end{theorem}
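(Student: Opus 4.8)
The plan is to prove $(i)$ by transporting the inverse limit defining $\pi_1^{qc}(X;\Omega)$ through the Galois correspondence supplied by \emph{Theorem 7.4}, and then to deduce $(ii)$ by locating the maximal formally unramified subextension $k(X)^{au}$ inside $\Omega$ and invoking \emph{Theorem 10.2}. First I would set up $(i)$ as follows. For each $Z\in [X;\Omega]_{qc}$ the morphism $Z\to X$ is surjective of finite type with $Z$ \emph{qc} over $X$, so by \emph{Lemma 7.1} together with \emph{Theorem 4.1} the function field $k(Z)$ is Galois over $k(X)$, and by \emph{Theorem 7.4} there is a canonical isomorphism $Aut(Z/X)\cong Gal(k(Z)/k(X))$. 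If $Z_{1}\leq Z_{2}$, the dominating morphism $Z_{2}\to Z_{1}$ gives $k(Z_{1})\subseteq k(Z_{2})\subseteq\Omega$, and the restriction $Aut(Z_{2}/X)\to Aut(Z_{1}/X)$ corresponds exactly to $Gal(k(Z_{2})/k(X))\to Gal(k(Z_{1})/k(X))$; thus $Z\mapsto k(Z)$ is a morphism of directed systems compatible with the transition maps. Passing to the inverse limit over $[X;\Omega]_{qc}$ yields
\begin{equation*}
\pi_1^{qc}(X;\Omega)={\lim_{\longleftarrow}}_{Z}\,Aut(Z/X)\cong {\lim_{\longleftarrow}}_{Z}\,Gal(k(Z)/k(X)).
\end{equation*}

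The key step, and the one I expect to be the main obstacle, is to identify the right-hand side with $Gal(\Omega/k(X))$, for which it suffices to prove the exhaustion
\begin{equation*}
\Omega=\bigcup_{Z\in [X;\Omega]_{qc}}k(Z),
\end{equation*}
i.e. that every $\omega\in\Omega$ lies in the function field of some finite-type \emph{qc} cover. To produce such a $Z$ I would use that $\Omega$ is separably generated over $k(X)$: choosing a separating transcendence basis $B$, the element $\omega$ is separable algebraic over $k(X)(B_{0})$ for a finite $B_{0}\subseteq B$, so the normal closure $N$ of $k(X)(B_{0})(\omega)$ over $k(X)(B_{0})$ inside $\Omega$ is finite separable, hence finitely generated over $k(X)$. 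Taking the quasi-galois closure $L$ of $N$ over $k(X)$ inside $\Omega$, only \emph{finitely many} conjugations occur, since by \emph{Definition 3.3} a conjugation fixes a common transcendence basis and thus corresponds to a $k(X)(B_{0})$-embedding of $N$; therefore $L$ is finite over $k(X)(B_{0})$, hence finitely generated, quasi-galois and separably generated over $k(X)$, so Galois over $k(X)$ by \emph{Theorem 4.1}. By \emph{Theorem 6.3} there is a finite-type \emph{qc} cover $Z$ of $X$ with $k(Z)=L$, giving $\omega\in k(Z)$ and $Z\in [X;\Omega]_{qc}$. Granting the exhaustion, a compatible family of automorphisms of the $k(Z)$ glues to a unique automorphism of $\Omega$ fixing $k(X)$, and conversely every element of $Gal(\Omega/k(X))$ restricts to such a family; since each $k(Z)$ has $k(X)$ as invariant field, so does $\Omega$, which establishes $(i)$. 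The delicate point throughout is the finiteness control, namely keeping the quasi-galois closure finitely generated and verifying the finite-type hypothesis in the definition of $[X;\Omega]_{qc}$, so that these covers genuinely lie in the indexing system.

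For $(ii)$ I would fix $\Omega$ and recall from \emph{Theorem 10.2} that $\pi_1^{et}(X)\cong Gal(k(X)^{au}/k(X))$, where $k(X)^{au}\subseteq k(X)^{sep}\subseteq\Omega$ (the last inclusion because $k(X)\subseteq M$ forces $k(X)^{sep}\subseteq M^{sep}=\Omega$) and $k(X)^{au}/k(X)$ is an algebraic Galois extension by \emph{Lemma 9.8}$(i)$. Under the identification of $(i)$, the finite-type \emph{qc} covers $Z$ that are finite \'{e}tale over $X$ are precisely the ones with $k(Z)\subseteq k(X)^{au}$ (the unramified layers, by \emph{Definition 9.2} and \emph{Lemma 9.8}), and their function fields exhaust $k(X)^{au}$ exactly as in \emph{Steps 4--7} of the proof of \emph{Theorem 10.2}. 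Restriction of automorphisms to the canonical tower $k(X)^{au}$ then furnishes a surjection
\begin{equation*}
1\to Gal(\Omega/k(X)^{au})\to \pi_1^{qc}(X;\Omega)\to Gal(k(X)^{au}/k(X))\to 1,
\end{equation*}
in which the kernel $Gal(\Omega/k(X)^{au})$ is normal because $k(X)^{au}$, being the maximal formally unramified subextension, is stable under every automorphism of $\Omega$ over $k(X)$. This realizes the \'{e}tale part $\pi_1^{qc}(X;\Omega)_{et}$ as the normal piece associated with the unramified subextension $k(X)^{au}/k(X)$ and, together with \emph{Theorem 10.2}, yields the isomorphism with $\pi_1^{et}(X;s)$ for a geometric point $s$ over $\Omega$.

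The bulk of the work is in $(i)$; once the exhaustion is secured, part $(ii)$ is essentially bookkeeping, amounting to checking that the \'{e}tale (unramified) covers are exactly the layers inside $k(X)^{au}$ and that the normality of $Gal(\Omega/k(X)^{au})$ and the resulting isomorphism are the images under $(i)$ of the standard Galois-theoretic facts for the automorphism-stable subextension $k(X)^{au}/k(X)$.
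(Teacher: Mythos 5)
Your proposal for $(i)$ correctly isolates the point on which everything hinges — the exhaustion $\Omega=\bigcup_{Z\in[X;\Omega]_{qc}}k(Z)$ — which the paper's own proof (\emph{Step 1} of \S \emph{10.5}) never establishes but simply asserts ``according to preliminary facts on Galois groups.'' However, your argument for the exhaustion breaks at the finiteness claim. A conjugation in the sense of \emph{Definition 3.3} is an isomorphism fixing \emph{some} transcendence basis of the field in question, not the particular basis $B_{0}$ you chose, and quasi-galois in the sense of \emph{Definition 3.2} requires splitting of irreducible polynomials over \emph{every} intermediate field $K\subseteq F\subseteq L$. Consequently the quasi-galois closure $L$ of $N$ over $k(X)$ is not controlled by $k(X)(B_{0})$-embeddings of $N$, and in general it is not even finitely generated over $k(X)$. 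Concretely, take $k(X)=\mathbb{Q}$ and $\omega=t$ a transcendental element of $\Omega$. If $L\ni t$ is quasi-galois over $\mathbb{Q}$, then for each $n$ the polynomial $X^{n}-t^{n}$ is irreducible over the intermediate field $F=\mathbb{Q}(t^{n})\subseteq L$ and has the root $t\in L$, so all of its roots $\zeta_{n}^{k}t$ lie in $L$; hence $L$ contains every root of unity and cannot be finitely generated over $\mathbb{Q}$. So no algebraic $K$-variety $Z$ that is \emph{qc} over $X$ by a finite-type surjection (whose function field is then quasi-galois over $k(X)$ by \emph{Lemma 7.1}) can satisfy $t\in k(Z)$; the cover produced by \emph{Theorem 6.3} from your $L$ does not lie in $[X;\Omega]_{qc}$, and the exhaustion fails. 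This is not a repairable slip in your write-up: the example shows the obstruction is real whenever $\Omega$ is transcendental over $k(X)$ and $k(X)$ lacks roots of unity, so the difficulty you flagged as ``the main obstacle'' is genuine and is shared by the paper's unproved Step 1.

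A secondary discrepancy concerns $(ii)$. Your exact sequence exhibits $Gal(k(X)^{au}/k(X))\cong\pi_{1}^{et}(X;s)$ as a \emph{quotient} of $\pi_{1}^{qc}(X;\Omega)$ by the normal subgroup $Gal(\Omega/k(X)^{au})$ — which is the standard Galois-theoretic statement — whereas the theorem asserts that $\pi_{1}^{et}(X;s)$ is isomorphic to a normal \emph{subgroup} $\pi_{1}^{qc}(X;\Omega)_{et}$ of $\pi_{1}^{qc}(X;\Omega)$. The paper makes the same conflation in the opposite direction, calling $Gal(k(X)^{au}/k(X))$ ``the normal subgroup'' of $Gal(\Omega/k(X))$; but absent a canonical splitting of your sequence, quotient and subgroup are different assertions, so neither your argument nor the paper's literally produces the object demanded by the statement.
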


\begin{remark}
Let $X$ be an algebraic $K$-variety. Define
$$
\pi _{1}^{qc}\left( X \right)\triangleq \pi _{1}^{qc}\left( X;{k(X)}^{sep} \right).
$$
It is seen that there is a group isomorphism
$$
\pi _{1}^{qc}\left( X \right) \cong Gal\left( {k(X)}^{sep}/k\left( X\right)
\right).
$$
\end{remark}

\begin{remark}
Let $X$ be an algebraic $K$-variety. The quotient group
$$
\pi _{1}^{br}\left( X\right) =\frac {\pi _{1}^{qc}\left( X;k\left( X\right)
^{sep}\right)}{\pi _{1}^{qc}\left( X;k\left( X\right) ^{sep}\right) _{et}}
$$
is said to be the \textbf{ramified group} (or \textbf{branched group}) of  $X$.  By \emph{Lemma 9.8} it is seen that $k(X)^{sep}$ is a Galois extension of $k(X)^{au}$; from \emph{Theorems 10.2,10.7} we have $$\pi _{1}^{br}\left( X\right)= \frac{ Gal(k(X)^{sep}/k(X))}{Gal(k(X)^{au}/k(X))}.$$ The ramified group $\pi _{1}^{br}\left( X\right)$  reflects the
topological properties of the scheme $X$ such as the branched covers. In deed, such ramified groups will play an important role in giving the anabelian functors in the present paper.
\end{remark}

\subsection{A universal cover for the \emph{qc} fundamental group}

Let $X$ be an algebraic $K$-variety. Suppose that  $\Omega$ is the separable closure of a separably generated extension of the function field $k\left( X\right) $.

Assume that $X$ has a reduced affine covering with
values in $\Omega^{al}$ without
loss of generality. Let
$$
G=Gal\left( \Omega /k\left( X\right) \right)
$$
and let
$$
\Delta \subseteq \Omega \setminus k\left( X\right)
$$ be a set
of generators of $\Omega $ over $k\left( X\right) .$ By \emph{Theorem 4.1} it is seen that $\Omega$ is Galois over $k(X)$.

Repeating the universal construction for \'{e}tale fundamental group
in \S \emph{10.1}, we have an integral variety $X_{\Omega }$ and a morphism $f_{\Omega }$ such as in the
following lemma.

\begin{lemma}
There is an integral
$K$-variety $X_{\Omega }$ and a surjective morphism $f_{\Omega }:X_{\Omega
}\rightarrow X$ satisfying the conditions:

\begin{itemize}
\item $k\left( X_{\Omega }\right) =\Omega $;

\item $f_{\Omega }$ is affine;

\item $X_{\Omega }$ is qc over $X$ by $f_{\Omega }$;

\item $k\left( X_{\Omega }\right) $ is Galois over $k\left( X\right) ;$

\item $X_{\Omega }$ is essentially affine in $\Omega$.
\end{itemize}
\end{lemma}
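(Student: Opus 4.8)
The plan is to obtain $(X_\Omega, f_\Omega)$ by feeding the Galois extension $\Omega/k(X)$ into the universal construction for \emph{qc} covers, exactly as the construction of \S 10.1 feeds in $k(X)^{au}$; in other words this lemma is the specialization of \emph{Lemma 6.2} to the data $Y=X$, $M=k(X)$, $L=\Omega$. First I would record that the hypotheses are in place: by \emph{Lemma 6.1} I may assume without loss of generality that $X$ is essentially affine in $k(X)^{al}\subseteq\Omega^{al}$, and, as noted just above the statement, $\Omega$ is Galois over $k(X)$ (via \emph{Theorem 4.1}). Thus the standing assumptions of \emph{Lemma 6.2} are met, and its output will be the required pair.

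Concretely, I would fix a maximal reduced affine covering $\mathcal{C}_X=\{(V,\psi_V,B_V)\}$ of $X$ with values in $k(X)^{al}$ and, for each chart, set $A_V=B_V[\Delta_V]$ where $\Delta_V=\{\sigma(w)\in\Omega:\sigma\in G,\ w\in\Delta\}$ is the full $G$-orbit of the chosen set $\Delta$ of generators of $\Omega$ over $k(X)$, with $G=Gal(\Omega/k(X))$. Then $Fr(A_V)=\Omega$ and $B_V$ is exactly the invariant subring of the natural $G$-action on $A_V$. Forming the disjoint union $\Sigma=\coprod_V Spec(A_V)$, identifying two points whenever their prime ideals coincide in $\Omega$ (the relation $R_\Sigma$), and passing to the quotient produces an integral scheme $X_\Omega$ together with the projection $f_\Omega:X_\Omega\to X$ and the affine patching $\mathcal{C}_{X_\Omega}=\{(U_V,\varphi_V,A_V)\}$ with $U_V=f_\Omega^{-1}(V)=Spec(A_V)$.

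It then remains to read off the five listed properties. The equality $k(X_\Omega)=Fr(A_V)=\Omega$ is immediate; $f_\Omega$ is affine because each $U_V$ equals $f_\Omega^{-1}(V)=Spec(A_V)$; the extension $k(X_\Omega)=\Omega$ is Galois over $k(X)$ by the remark above; and the rings $A_V$ lie in $\Omega$, so $X_\Omega$ is essentially affine in $\Omega$ (this is slightly sharper than the $\Omega^{al}$ in \emph{Lemma 6.2}, but the sharper form is exactly what the construction delivers). The remaining property, that $X_\Omega$ is \emph{qc} over $X$ by $f_\Omega$, I would obtain from \emph{Theorem 5.6}: it suffices to check that $\mathcal{C}_{X_\Omega}$ is the unique maximal affine patching of $X_\Omega$ with values in $\Omega$ that is quasi-galois closed over $X$ by $f_\Omega$.

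The main obstacle is precisely this last verification, i.e.\ quasi-galois closedness together with maximality and uniqueness. The crux is that, because $\Delta_V$ is stable under the entire group $G$, the ring $A_V=B_V[\Delta_V]$ already contains every $B_V$-conjugate of each of its generators; hence for any affine open $V\subseteq X$, any chart $U\subseteq f_\Omega^{-1}(V)$, and any conjugate $A'$ of $A_V$ over the image $B_V$, the chart with ring $A'$ again appears in $\mathcal{C}_{X_\Omega}$, which is the content of \emph{Definition 3.6}; any omission of such a conjugate chart would enlarge $\mathcal{C}_{X_\Omega}$ and contradict its maximality, exactly as in the uniqueness argument of \emph{Lemma 5.1}. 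The delicate point to keep in mind throughout is that $\Omega$ need be neither algebraic nor finitely generated over $k(X)$, so the conjugation structure must be argued at the level of the generating set $\Delta$ rather than by a degree count; this is the reason the construction proceeds through the $G$-orbit $\Delta_V$ rather than through a primitive element.
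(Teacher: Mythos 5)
Your proposal is correct and takes essentially the same route as the paper: the paper proves this lemma precisely by repeating the universal construction of \S \emph{10.1} (equivalently, \emph{Lemma 6.2} applied with $Y=X$, $M=k(X)$, $L=\Omega$), using \emph{Lemma 6.1} to reduce to the essentially affine case, \emph{Theorem 4.1} for the Galois property, and the maximality/quasi-galois closedness of the resulting affine patching together with \emph{Theorem 5.6} to conclude that $X_{\Omega}$ is \emph{qc} over $X$. Your spelled-out verification of the conjugate-chart condition, and your observation that the construction delivers ``essentially affine in $\Omega$'' rather than merely in $\Omega^{al}$, match what the paper's construction implicitly provides.
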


Such an integral $K$-variety $X_{\Omega }$ with a morphism $f_{\Omega}$, denoted by $(X_{\Omega },f_{\Omega })$, is said to be  a
\textbf{universal cover} over $X$ for the \emph{qc} fundamental group group $\pi _{1}^{qc}\left(
X;\Omega \right)$.

\subsection{Proof of \emph{Theorem 10.7}}

Now we can prove the main result above on the \emph{qc} fundamental group in \S \emph{10.4}.

\begin{proof}
(\textbf{Proof of \emph{Theorem 10.7}}) We will proceed in several steps.

\emph{Step 1.} By \emph{Theorem 7.4} we have
$$
\begin{array}{l}
Gal\left( \Omega /k\left( X\right) \right) \\
\cong {\lim_{\leftarrow }}_{Z\in [X;\, \Omega]_{qc} }{Gal\left(
k\left( Z\right) /k\left( X\right) \right) } \\
\cong {\lim_{\leftarrow }}_{Z\in [X;\, \Omega]_{qc} }{Aut\left(
Z/X\right) } \\
={\pi }_{1}^{qc}\left( X;\Omega \right)
\end{array}
$$ according to preliminary facts on Galois groups.

\emph{Step 2.} Fixed a geometric point $s$ of $X$ over $\Omega$.
Let $[X;s]_{et}$ be the set of finite \'{e}tale
Galois covers of $X$ over the geometric point $s$. For any
$Z_{1},Z_{2}\in [X;s]_{et}$ we say
$$
Z_{1}\leq Z_{2}
$$
if and only if $Z_{2}$ is a finite \'{e}tale Galois cover of $Z_{1}.$ Then $[X;s]_{et}$ is a partially ordered set. Put
\begin{equation*}
[X;s]_{qc}\triangleq [X;\Omega]_{qc} \bigcap [X;s]_{et}.
\end{equation*}

Let $Z_{1},Z_{2}\in [X;s]_{qc}$. It is easily seen that $Z_{2}$ is a finite \'{e}tale Galois cover of $Z_{1}$ if
and only if $Z_{2}$ is \emph{qc} over $Z_{1}$.

It follows that $[X;s]_{qc} $ is a co-final directed subset in $[X;s]_{et} $.

\emph{Step 3.} Now consider the universal covers $X_{\Omega}$ and $X_{et}$ of $X$ for the groups $\pi_{1}^{qc}(X;\Omega)$ and $\pi_{1}^{et}(X;s)$, respectively.
From \emph{Step 7} in \S \emph{10.2} we have
\begin{equation*}
\begin{array}{l}
Gal\left(k(X_{ et})/k\left( X\right) \right) \\
\cong {\lim_{\leftarrow }}_{Z\in [X;s]_{et} }{Gal\left(
k\left( Z\right) /k\left( X\right) \right) } \\
\cong {\lim_{\leftarrow }}_{Z\in [X;s]_{et}}{Aut\left(
Z/X\right) } \\
\cong {\pi }_{1}^{et}\left( X;s\right) .
\end{array}
\end{equation*}

By \emph{Step 2} we have
\begin{equation*}
\begin{array}{l}
Gal\left(k(X_{ et})/k\left( X\right) \right) \\

\cong {\lim_{\leftarrow }}_{Z\in [X;s]_{et}}{Gal\left( k\left(
Z\right) /k\left( X\right) \right) } \\

\cong {\lim_{\leftarrow }}_{Z\in [X;s]_{qc}  }{
Gal\left( k\left( Z\right) /k\left( X\right) \right) }
\end{array}
\end{equation*}
since $[X;s]_{qc} $ is  co-final  in $[X;s]_{et} $.

On the other hand, we have
$$k(X_{\Omega})={\lim _{\longrightarrow}}_{Z \in [X;\, \Omega]_{qc} }{k(Z)}$$ and
$$k(X_{et})={\lim _{\longrightarrow}}_{Z \in [X;\, s]_{et} }{k(Z)}={\lim _{\longrightarrow}}_{Z \in [X;\, s]_{qc} }{k(Z)}$$
as direct limits of direct systems of groups for the function fields.

It is seen that $${\lim _{\longrightarrow}}_{Z \in [X;\, \Omega]_{qc} }{k(Z)}$$ is an extension of the field  $${\lim _{\longrightarrow}}_{Z \in [X;\, s]_{qc} }{k(Z)} .$$

It follows that $$k(X)^{au}=k(X_{et})$$ is a subfield of $$\Omega=k(X_{\Omega}).$$

Then we have a tower of Galois extensions of function fields $$k(X)\subseteq k(X)^{au}\subseteq \Omega$$ from \emph{Lemma 10.1}.

It is seen that
${\pi }_{1}^{et}\left( X;s\right)$ is isomorphic to the normal subgroup $$Gal\left(k(X)^{au}/k\left(
X\right) \right) $$ of the group $Gal\left( \Omega /k\left( X\right) \right) $.
Hence, ${\pi }_{1}^{et}\left( X;s\right)$ is isomorphic to a normal subgroup of ${\pi }_{1}^{qc}\left( X;\Omega \right)$
 since by \emph{Step 1} we have $$Gal\left( \Omega /k\left( X\right) \right) \cong {\pi }_{1}^{qc}\left( X;\Omega \right) .$$
This completes the proof.
\end{proof}

\section{Monodromy Actions}

 Naturally there exist three types of monodromy actions for a given integral $K$-variety, as we have done for arithmetic varieties in \cite{An8}:
\begin{itemize}
\item Monodromy action of \'{e}tale fundamental group on the universal cover;

\item Monodromy action of absolute Galois group on the \emph{sp}-completion;

\item Monodromy action of ramified group on the \emph{sp}-completion.
\end{itemize}

\subsection{Monodromy actions of \'{e}tale fundamental groups}

From \emph{Lemma 10.1} and \emph{Theorem 10.2} it is seen there is the below preliminary fact on
 the \'{e}tale fundamental group of an
integral variety.

\begin{lemma}
For an integral $K$-variety $X$, there is an isomorphism
\begin{equation*}
Aut\left( X_{et}/X\right)  \cong \pi _{1}^{et}\left( X\right)
\end{equation*}
between groups, where $\left( X_{et},p_{X}\right)$ is a universal cover of $X$ for the
\'{e}tale fundamental group $\pi_{1}^{et}(X)$.
\end{lemma}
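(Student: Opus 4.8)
The plan is to obtain the claimed isomorphism as a composite of two isomorphisms already established in the excerpt: the comparison between automorphism groups of \emph{qc} covers and Galois groups of function fields (\emph{Lemma 7.2}), and the identification of the \'{e}tale fundamental group with a Galois group (\emph{Theorem 10.2}).

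First I would recall the defining properties of the universal cover $(X_{et},p_{X})$ furnished by \emph{Lemma 10.1}: the morphism $p_{X}:X_{et}\to X$ is surjective (indeed affine), $X_{et}$ is \emph{qc} over $X$ by $p_{X}$, the function field satisfies $k(X_{et})={k(X)}^{au}$, and $k(X_{et})$ is canonically Galois over $k(X)$. These are precisely the hypotheses required to invoke \emph{Lemma 7.2}, taking $X_{et}$ in the role of the upper scheme and $X$ in the role of the base.

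Next I would apply \emph{Lemma 7.2} to produce a group isomorphism
$$Aut\left(X_{et}/X\right)\cong Gal\left(k(X_{et})/k(X)\right)=Gal\left({k(X)}^{au}/k(X)\right).$$
On the other hand, \emph{Theorem 10.2} supplies a group isomorphism
$$\pi_{1}^{et}\left(X\right)\cong Gal\left({k(X)}^{au}/k(X)\right).$$
Composing the first isomorphism with the inverse of the second then yields the desired $Aut\left(X_{et}/X\right)\cong\pi_{1}^{et}\left(X\right)$.

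Since both ingredients are already proved, there is no substantial obstacle here; the lemma is genuinely a \emph{corollary} of the preceding work. The single point demanding care is to confirm that the Galois extension witnessing \emph{Lemma 7.2} is literally the maximal formally unramified extension ${k(X)}^{au}$ appearing in \emph{Theorem 10.2}, so that the two Galois groups being matched are the \emph{same} group rather than merely abstractly isomorphic ones. This identification is guaranteed by the equality $k(X_{et})={k(X)}^{au}$ recorded in \emph{Lemma 10.1}, which makes the resulting composite isomorphism canonical.
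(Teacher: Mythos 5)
Your proposal is correct and coincides with the paper's own (very terse) justification: the paper derives Lemma 11.1 directly from \emph{Lemma 10.1} and \emph{Theorem 10.2}, with \emph{Lemma 7.2} serving as the implicit bridge that you have made explicit. Your added check that $k(X_{et})={k(X)}^{au}$ is the same field in both isomorphisms is exactly the point that makes the composite canonical, so nothing is missing.
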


Now let $X$ and $Y$ be two integral $K$-varieties. Suppose that $\left( X_{et},p_{X}\right)$ and $\left( Y_{et},p_{Y}\right)$ are
universal covers of $X$ and $Y$ for the \'{e}tale fundamental groups $\pi_{1}^{et}(X)$ and $\pi_{1}^{et}(Y)$,
respectively.

By \emph{Lemma 11.1} it is seen that each group homomorphism
\begin{equation*}
\sigma :\pi _{1}^{et}\left( X\right) \rightarrow \pi _{1}^{et}\left(
Y\right) .
\end{equation*}
gives a group homomorphism, namely
\begin{equation*}
\sigma :Aut\left( X_{et}/X\right) \rightarrow Aut\left( Y_{et}/Y\right) .
\end{equation*}
The converse is true.

Here is the monodromy action of \'{e}tale fundamental groups on the universal covers.

\begin{lemma}
Assume that there is a group homomorphism
\begin{equation*}
\sigma :Aut\left( X_{et}/X\right) \rightarrow Aut\left( Y_{et}/Y\right) .
\end{equation*}
Then there is a bijection
\begin{equation*}
\tau :Hom\left( X,Y\right) \to Hom\left( X_{et},Y_{et}\right) , f\mapsto f_{et}
\end{equation*}
between sets given in a canonical manner:

\begin{itemize}
\item Let $f\in Hom\left( X,Y\right)$. Then the map
\begin{equation*}
g\left( x_{0}\right) \longmapsto \sigma \left( g\right) \left( f\left(
x_{0}\right) \right)
\end{equation*}
defines a morphism
\begin{equation*}
f_{et}:X_{et}\rightarrow Y_{et}
\end{equation*}
for any $x_{0}\in X$ and any $g\in Aut\left( X_{et}/X\right) .$

\item Let $f_{et}\in Hom\left( X_{et},Y_{et}\right)$.
Then the map
\begin{equation*}
p_{X}\left( x\right) \longmapsto p_{Y}\left( f_{et}\left( x\right) \right)
\end{equation*}
defines a morphism
\begin{equation*}
f:X\rightarrow Y
\end{equation*}
for any $x\in X_{et}.$
\end{itemize}

In particular, we have
\begin{equation*}
f\circ p_{X}=p_{Y}\circ f_{et}.
\end{equation*}
\end{lemma}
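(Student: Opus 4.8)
The plan is to construct the two maps of the asserted bijection explicitly, show that each is a well-defined morphism of schemes, and then check that they are mutually inverse; the relation $f\circ p_{X}=p_{Y}\circ f_{et}$ will be built into the construction rather than proved separately. First I would record the fibre structure of the universal covers. Since $X_{et}$ is \emph{qc} over $X$ by $p_{X}$ and $k(X_{et})=k(X)^{au}$ is Galois over $k(X)$ by \emph{Lemma 10.1}, \emph{Lemmas 7.2--3} give that $p_{X}$ is surjective and affine and that every fibre is a single orbit of $Aut(X_{et}/X)$: after fixing a lift (still written $x_{0}$) of a point of $X$, one has $p_{X}^{-1}(x_{0})=\{g(x_{0}):g\in Aut(X_{et}/X)\}$, and likewise for $(Y_{et},p_{Y})$. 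Moreover, by \emph{Lemma 7.3} (and \emph{Theorem 7.4}) one has $\mathcal{O}_{X}\cong (p_{X})_{\ast}\mathcal{O}_{X_{et}}^{\,Aut(X_{et}/X)}$, so that $X$ is the $Aut(X_{et}/X)$-quotient of $X_{et}$; this is what will let me descend morphisms in the reverse direction.

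For the forward map $f\mapsto f_{et}$, given $f\in Hom(X,Y)$ I define $f_{et}$ on points by sending $g(x_{0})\in X_{et}$ to $\sigma(g)(f(x_{0}))\in Y_{et}$, where $f(x_{0})$ abbreviates a chosen lift in $Y_{et}$ of the point $f(x_{0})\in Y$. To see this is well defined I must check that whenever $g(x_{0})=g'(x_{0})$, so that $g^{-1}g'$ stabilises the chosen lift, $\sigma(g^{-1}g')$ stabilises the lift of $f(x_{0})$; equivalently, $\sigma$ must carry the point stabiliser (decomposition subgroup) at $x_{0}$ into the one at $f(x_{0})$. Granting this coherence, the resulting point map is $\sigma$-equivariant by construction and lifts $f$, so $p_{Y}\circ f_{et}=f\circ p_{X}$ holds automatically. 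I then promote it from a map of points to a morphism of schemes by working chart by chart: both covers are essentially affine in their function fields by \emph{Lemma 10.1}, so over an affine $V=Spec(B)\subseteq Y$ with preimage $U=Spec(A)\subseteq X_{et}$ the morphism is induced by the field map $k(Y)^{au}\to k(X)^{au}$ determined by $f^{\ast}$ together with $\sigma$ (via \emph{Lemma 11.1} and \emph{Theorem 10.2}), and these local pieces glue by the uniqueness of the maximal affine covering of a \emph{qc} scheme (\emph{Lemma 5.1}, \emph{Theorem 5.6}).

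For the reverse map $f_{et}\mapsto f$ I would exploit the quotient presentations above. Given $f_{et}\in Hom(X_{et},Y_{et})$, I set $f(p_{X}(x))\triangleq p_{Y}(f_{et}(x))$ for $x\in X_{et}$. This is well defined precisely because an admissible $f_{et}$ is $\sigma$-equivariant, so it sends each $Aut(X_{et}/X)$-orbit (a fibre of $p_{X}$) into a single fibre of $p_{Y}$; it then descends to a scheme morphism through $\mathcal{O}_{X}\cong (p_{X})_{\ast}\mathcal{O}_{X_{et}}^{\,Aut(X_{et}/X)}$. Here I would also verify the matching of the two sets under $\tau$, namely that every $f_{et}$ arising in the image is $\sigma$-equivariant and conversely, so that the correspondence is genuinely onto $Hom(X_{et},Y_{et})$. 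Finally I would check that the two assignments are mutually inverse: composing them returns $f$ (resp. $f_{et}$) on points by the orbit description and on structure sheaves by the same uniqueness of the natural affine structure, whence $\tau$ is a bijection and $f\circ p_{X}=p_{Y}\circ f_{et}$.

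The step I expect to be the main obstacle is the well-definedness of the forward map: the given abstract homomorphism $\sigma$ must be compatible with the specific morphism $f$ at the level of decomposition subgroups in each fibre, equivalently it must be compatible with $f^{\ast}:k(Y)\to k(X)$ so as to yield a field embedding $k(Y)^{au}\to k(X)^{au}$ extending $f^{\ast}$ and intertwining the two Galois actions. Establishing this coherence, and arranging the base lifts consistently across all fibres so that the pointwise rule $g(x_{0})\mapsto\sigma(g)(f(x_{0}))$ patches into a single scheme morphism, is the crux; the remaining verifications (equivariance, descent, mutual invertibility) are formal given the \emph{qc} machinery of \S\S\,5--7.
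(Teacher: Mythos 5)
Your proposal correctly isolates the fibre structure (fibres of $p_{X}$ are $Aut(X_{et}/X)$-orbits, via \emph{Lemmas 7.2--3}) and sets up both directions of the correspondence, but it does not prove the lemma: the two steps you yourself flag as the crux are never carried out, and they are not formal consequences of the \emph{qc} machinery. For the forward map you write ``granting this coherence,'' i.e.\ you assume that $\sigma$ carries the stabiliser of the chosen lift of $x_{0}$ into the stabiliser of the chosen lift of $f(x_{0})$; for surjectivity of $\tau$ you likewise defer the claim that every $f_{et}\in Hom(X_{et},Y_{et})$ is $\sigma$-equivariant (equivalently, sends fibres of $p_{X}$ into fibres of $p_{Y}$). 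Neither claim can be established for an arbitrary abstract homomorphism $\sigma$, which is all the lemma hypothesises. Indeed, take $\sigma$ to be the trivial homomorphism: your rule $g(x_{0})\mapsto \sigma(g)(\widetilde{f(x_{0})})$ then collapses every fibre of $p_{X}$ to a single point, so every map in the image of $\tau$ is constant on fibres, whereas $Hom(X_{et},Y_{et})$ contains maps that are not (e.g.\ $X=Y$, $f_{et}=id_{X_{et}}$, when $\pi_{1}^{et}(X)\neq\{0\}$); hence $\tau$ cannot be a bijection. So the gap is not a pending verification but a missing hypothesis tying $\sigma$ to the geometry --- in effect, $\sigma$ must come from a field embedding $k(Y)^{au}\to k(X)^{au}$ compatible with $f^{\ast}$, exactly the coherence you name but do not (and cannot, from the stated hypotheses alone) establish.

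For comparison, the paper's entire proof of this lemma is the single sentence ``It is immediate from \emph{Lemma 10.1} and \emph{Theorem 10.2},'' i.e.\ it treats precisely the points you flagged as though they were automatic. Your write-up is therefore more substantive than the source --- you located where the real content has to live --- but as it stands it is an outline whose central step is assumed rather than proved, and that step is exactly where the statement, as phrased for an arbitrary $\sigma$, breaks down.
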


\begin{proof}
It is immediate from \emph{Lemma 10.1} and \emph{Theorem 10.2}.
\end{proof}

\subsection{Monodromy actions of absolute Galois groups}

Let $X$ and $Y$ be two integral $K$-varieties. Consider the $sp$-completions $(X_{sp},\lambda_{X})$ and $(Y_{sp},\lambda_{Y})$ and the universal covers $\left( X_{et},p_{X}\right)$ and $\left( Y_{et},p_{Y}\right)$ for  $\pi_{1}^{et}(X)$ and $\pi_{1}^{et}(Y)$,
respectively.

 It is easily seen that there are isomorphisms
$$
Gal(k(X)^{sep}/k(X))\cong Aut(X_{sp}/X);
$$
$$
Gal(k(Y)^{sep}/k(Y))\cong Aut(Y_{sp}/Y).
$$
between groups from \emph{Lemma 7.2} and \emph{Theorem 8.7}.

Here is the monodromy action of absolute Galois groups on the \emph{sp}-completions.

\begin{lemma}
Suppose that there is a group
homomorphism $$\sigma: Aut(X_{sp}/X) \to Aut(Y_{sp}/Y).$$ Then there is a bijection
\begin{equation*}
\tau :Hom\left( X,Y\right) \to Hom\left( X_{sp},Y_{sp}\right) , f\mapsto
f_{sp}
\end{equation*}
between sets given in a canonical manner:

\begin{itemize}
\item Let $f\in Hom\left( X,Y\right)$. Then the map
\begin{equation*}
g\left( x_{0}\right) \longmapsto \sigma \left( g\right) \left( f\left(
x_{0}\right) \right)
\end{equation*}
defines a morphism
\begin{equation*}
f_{sp}:X_{sp}\rightarrow Y_{sp}
\end{equation*}
for any $x_{0}\in X$ and any $g\in Aut(X_{sp}/X)$.

\item Let $f_{sp}\in Hom\left( X_{sp},Y_{sp}\right)$. Then the map
\begin{equation*}
\lambda_{X}\left( x\right) \longmapsto \lambda_{Y}\left( f_{sp}\left(
x\right) \right)
\end{equation*}
defines a morphism
\begin{equation*}
f:X\rightarrow Y
\end{equation*}
for any $x\in X_{sp}.$
\end{itemize}

In particular, we have
\begin{equation*}
f\circ \lambda_{X}=\lambda_{Y}\circ f_{sp}.
\end{equation*}
\end{lemma}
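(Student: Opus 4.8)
The plan is to mirror the proof of \emph{Lemma 11.2} (and of \emph{Lemma 9.1}), replacing the universal cover by the $sp$-completion: the two isomorphisms $Gal(k(X)^{sep}/k(X))\cong Aut(X_{sp}/X)$ and $Gal(k(Y)^{sep}/k(Y))\cong Aut(Y_{sp}/Y)$ recorded just above the statement (from \emph{Lemma 7.2} and \emph{Theorem 8.7}) will play exactly the role that \emph{Lemma 10.1} and \emph{Theorem 10.2} played there. First I would record the fiber structure of $\lambda_{X}$. Since $X_{sp}$ is \emph{qc} over $X$ by $\lambda_{X}$ with $k(X_{sp})=k(X)^{sep}$ Galois over $k(X)$, \emph{Lemmas 7.2--7.3} give that $\lambda_{X}$ is affine and that each fiber is a single $Aut(X_{sp}/X)$-orbit,
$$\lambda_{X}^{-1}(\lambda_{X}(\tilde{x}))=\{g(\tilde{x}):g\in Aut(X_{sp}/X)\},$$
and likewise for $Y$.

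For the forward map $\tau$, given $f\in Hom(X,Y)$ I would define $f_{sp}$ by the $\sigma$-equivariant rule $g(x_{0})\mapsto \sigma(g)(\widetilde{f(x_{0})})$, where $\widetilde{f(x_{0})}$ is a fixed point of $Y_{sp}$ over $f(x_{0})$. The delicate point is well-definedness: a point of $X_{sp}$ may be written as $g_{1}(\tilde{x}_{0})=g_{2}(\tilde{x}_{0})$, so I must check that $\sigma(g_{1})$ and $\sigma(g_{2})$ carry $\widetilde{f(x_{0})}$ to the same point. Because this can only be enforced at the level of orbits, I would phrase the construction sheaf-theoretically: using the essentially affine structures of $X_{sp}$ and $Y_{sp}$ in $k(X)^{al}$ and $k(Y)^{al}$ (\emph{Theorem 8.7}), $f$ together with $\sigma$ induces on each affine chart a ring homomorphism between the relevant subrings of $k(X)^{sep}$ and $k(Y)^{sep}$ compatible with the Galois actions; these glue, exactly as in the universal constructions of \S \emph{8.2} and \S \emph{10.1}, to a morphism of schemes $f_{sp}:X_{sp}\to Y_{sp}$ lifting $f$. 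This is where I expect the real work to lie.

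For the inverse map, given such an $f_{sp}$ I would define $f$ by $\lambda_{X}(x)\mapsto \lambda_{Y}(f_{sp}(x))$. Well-definedness requires that $f_{sp}$ carry each $Aut(X_{sp}/X)$-fiber into a single $Aut(Y_{sp}/Y)$-fiber; I would deduce this from the $\sigma$-equivariance $f_{sp}\circ g=\sigma(g)\circ f_{sp}$, which characterizes the morphisms in the image of $\tau$, together with the fiber description above and specialization-preservation (\emph{Lemma 8.3}). The descended $f$ is again a morphism of schemes: since $\lambda_{X}$ is affine and surjective (\emph{Theorem 8.7}), $f$ is recovered on the invariant subsheaf $\mathcal{O}_{X}\cong (\lambda_{X})_{\ast}(\mathcal{O}_{X_{sp}})^{Aut(X_{sp}/X)}$ via \emph{Lemma 7.3}.

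Finally I would verify that the two assignments are mutually inverse and that $f\circ \lambda_{X}=\lambda_{Y}\circ f_{sp}$ holds; both follow immediately from the defining formulas and the fiber structure. The main obstacle, as indicated, is upgrading the point-level $\sigma$-equivariant assignment to an honest morphism of schemes and checking its independence of the coset representative $g$; this is resolved by descending to the essentially affine charts and invoking the gluing of the $sp$-completion (\emph{Theorem 8.7}) together with the invariant-subring description (\emph{Lemma 7.3}), precisely as in the analogous \emph{Lemma 11.2}.
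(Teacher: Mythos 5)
Your plan follows what appears to be the paper's intended route: the paper's entire proof of this lemma is the single sentence ``It is immediate from \emph{Lemma 7.2} and \emph{Theorem 8.7},'' and your expansion via the fiber description $\lambda_{X}^{-1}(x_{0})=\{g(\tilde{x}_{0}):g\in Aut(X_{sp}/X)\}$ (\emph{Lemmas 7.2--7.3}), the gluing data of \emph{Theorem 8.7}, and the analogy with \emph{Lemmas 9.1} and \emph{11.2} uses exactly those ingredients. You have also correctly isolated the crux that the paper silently skips, namely that the displayed point-level rule $g(\tilde{x}_{0})\mapsto \sigma(g)(\widetilde{f(x_{0})})$ must be shown to be well defined for an \emph{arbitrary} group homomorphism $\sigma$.

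However, your proposed resolution of that crux does not close the gap, and in fact no argument can, because the statement as given fails. First, for an arbitrary $\sigma$ there is no reason that $\sigma$ carries the stabilizer (decomposition subgroup) of $\tilde{x}_{0}$ into the stabilizer of $\widetilde{f(x_{0})}$, so the rule genuinely depends on the coset representative and on the chosen lifts; rephrasing it ``sheaf-theoretically'' does not remove this dependence. What your chart-by-chart gluing actually produces is a lift of $f$ attached to a choice of extension of the field embedding $k(Y)\hookrightarrow k(X)$ to an embedding $k(Y)^{sep}\hookrightarrow k(X)^{sep}$; such a lift exists, but it has no reason to satisfy the $\sigma$-formula unless $\sigma$ is itself induced by that extension --- a compatibility hypothesis the lemma does not impose. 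Second, the claimed bijectivity of $\tau$ onto all of $Hom(X_{sp},Y_{sp})$ is false: take $X=Y=Spec(\mathbb{Q})$, so $X_{sp}=Y_{sp}=Spec(\overline{\mathbb{Q}})$; then $Hom(X,Y)$ is a single point, while every element of $Gal(\overline{\mathbb{Q}}/\mathbb{Q})$ gives a distinct morphism $X_{sp}\to Y_{sp}$, and all of them satisfy $f\circ\lambda_{X}=\lambda_{Y}\circ f_{sp}$ with $f=id$. Your own treatment of the inverse direction concedes this: you restrict to morphisms satisfying the $\sigma$-equivariance $f_{sp}\circ g=\sigma(g)\circ f_{sp}$, i.e.\ to ``the image of $\tau$,'' but that proves a bijection onto a proper subset of $Hom(X_{sp},Y_{sp})$, which is a different statement from the lemma. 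A rigorous version would need (i) the hypothesis that $\sigma$ respects decomposition subgroups, equivalently is induced by an embedding of separable closures compatible with $f$, and (ii) a conclusion restated as a bijection onto the set of $\sigma$-equivariant morphisms; neither your sketch nor the paper's one-line proof supplies this.
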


\begin{proof}
It is immediate from \emph{Lemma 7.2} and \emph{Theorem 8.7}.
\end{proof}

\subsection{Monodromy actions of ramified groups}

To start with, let's prove a preparatory lemma.

\begin{lemma}
Let $X$ be an integral $K$-variety. Suppose that $(X_{sp},\lambda_{X})$  is an $sp$-completion of $X$ and $\left( X_{et},p_{X}\right)$ is a universal cover of $ X$ for the \'{e}tale fundamental group $\pi_{1}^{et}(X)$. Then there exists canonically a surjective morphism $q_{X}:X_{sp}\to
X_{et}$ satisfying the below properties:
\begin{itemize}
\item $q_{X}$ is affine;

\item $\lambda_{X}=p_{X}\circ q_{X}$;

\item $X_{sp}$ is qc over $X_{et}$ by $q_{X}$;

\item $(X_{sp},q_{X})$ is an sp-completion of $X_{et}$.
\end{itemize}
In particular, we have $q_{X}=\lambda_{X_{et}}.$
\end{lemma}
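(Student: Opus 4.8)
The plan is to exhibit $q_X$ as the natural projection induced by the field inclusion $k(X_{et})=k(X)^{au}\subseteq k(X)^{sep}=k(X_{sp})$, and then to recognize $(X_{sp},q_X)$ as an $sp$-completion of $X_{et}$, so that the identity $q_X=\lambda_{X_{et}}$ falls out of the uniqueness of $sp$-completions. First I would record the field tower. By Lemma 10.1, $k(X_{et})=k(X)^{au}$; by Theorem 8.7, $k(X_{sp})=k(X)^{sep}$; and by Lemma 9.8$(i)$ together with Remark 10.10, $k(X)^{au}$ is an algebraic Galois subextension of $k(X)^{sep}/k(X)$. Consequently
\begin{equation*}
k(X)\subseteq k(X)^{au}=k(X_{et})\subseteq k(X)^{sep}=k(X_{sp}),
\end{equation*}
and, since $k(X)^{sep}$ is separably closed and algebraic over $k(X)^{au}$, it is in fact a separable closure of $k(X_{et})$ and is Galois over it.

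Next I would construct $q_X$ chartwise. Both $X_{et}$ and $X_{sp}$ are produced by the universal constructions of Lemma 10.1 and Theorem 8.7 starting from the same maximal reduced affine covering $\{(V,\psi_V,B_V)\}$ of $X$: over $V$ one has the chart $Spec(A^{et}_V)$ in $X_{et}$, with $A^{et}_V=B_V[\Delta^{au}_V]$ and $Fr(A^{et}_V)=k(X)^{au}$, and the chart $Spec(A^{sp}_V)$ in $X_{sp}$, with $A^{sp}_V=B_V[\Delta^{sep}_V]$ and $Fr(A^{sp}_V)=k(X)^{sep}$. Choosing generators compatibly (which is harmless, the maximal affine structures being canonical by Lemma 5.1) gives ring inclusions $A^{et}_V\hookrightarrow A^{sp}_V$ inside $\Omega=k(X)^{al}$, hence affine morphisms $Spec(A^{sp}_V)\to Spec(A^{et}_V)$ sending a prime $\mathfrak q$ of $A^{sp}_V$ to $\mathfrak q\cap A^{et}_V$. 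Because points of $X_{et}$ and $X_{sp}$ are indexed by prime ideals in $k(X)^{au}$ and $k(X)^{sep}$ respectively (Step 4 of both constructions), these local morphisms agree on overlaps and glue to a morphism $q_X:X_{sp}\to X_{et}$ of schemes. Affineness is immediate from the local description; the compatible inclusions $B_V\hookrightarrow A^{et}_V\hookrightarrow A^{sp}_V$ give $\lambda_X=p_X\circ q_X$ at once; and surjectivity follows by comparing fibers, which by Lemmas 7.2--7.3 are the orbits of the relevant automorphism groups. This bookkeeping---matching the two canonical affine structures through the inclusion of function fields and checking the gluing---is the step I expect to require the most care.

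With $q_X$ in hand, the quasi-galois closedness of $X_{sp}$ over $X_{et}$ is comparatively easy, and I would deduce it from the criterion of Lemma 5.5 (via Theorem 5.6). Fix a chart $A_\alpha\in\mathcal C_{X_{sp}}$ and an affine open $V$ of $X_{et}$ with $U_\alpha\subseteq q_X^{-1}(V)$; let $V$ lie over the affine open $W$ of $X$, so that the image ring $B_\alpha$ of $\mathcal O_X(W)$ is contained in the image ring $B'_\alpha$ of $\mathcal O_{X_{et}}(V)$ in $k(X_{sp})$. Since $X_{sp}$ is $qc$ over $X$, $A_\alpha$ has exactly one conjugate over $B_\alpha$; but any conjugate of $A_\alpha$ over the larger ring $B'_\alpha$ is a fortiori a conjugate over $B_\alpha$, so there is at most one, and $A_\alpha$ itself is one. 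Thus $A_\alpha$ has a unique conjugate over $B'_\alpha$, and Lemma 5.5 shows $X_{sp}$ is $qc$ over $X_{et}$ by $q_X$. (At the level of fields this is the statement that $k(X)^{sep}$, being quasi-galois over $k(X)$, is quasi-galois over the intermediate field $k(X)^{au}$, which is immediate from Definition 3.2 and Lemma 4.3.)

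Finally I would conclude. By Theorem 8.7, $X_{sp}$ is $sp$-complete and essentially affine in $k(X)^{al}=k(X_{et})^{al}$, its function field $k(X)^{sep}$ is a separable closure of $k(X_{et})$, and $q_X$ is affine; combined with the $qc$ property just established, the pair $(X_{sp},q_X)$ satisfies every defining property of an $sp$-completion of $X_{et}$, so $(X_{sp},q_X)\in Sp[X_{et}]$. Since all $sp$-completions of $X_{et}$ are essentially equal (Lemma 8.8), the chosen $sp$-completion $\lambda_{X_{et}}$ may be identified with $q_X$; this gives $q_X=\lambda_{X_{et}}$ and completes the proof.
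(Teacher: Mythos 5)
Your proposal is correct and follows essentially the same route as the paper: the paper's entire proof consists of re-running the universal $sp$-completion construction of \S \emph{8.2} on $X_{et}$ and asserting that its output is $(X_{sp},\lambda_{X_{et}})$, which is precisely the identification that your chartwise construction of $q_{X}$ (via the inclusions $B_{V}\subseteq A_{V}^{et}\subseteq A_{V}^{sp}$) and your verification of the $sp$-completion axioms make explicit. The only difference is one of bookkeeping and direction --- you build $q_{X}$ between the two given schemes and then invoke the uniqueness of $sp$-completions (\emph{Lemma 8.8}) to conclude $q_{X}=\lambda_{X_{et}}$, whereas the paper constructs $\lambda_{X_{et}}$ afresh and leaves the identification of its source with $X_{sp}$ implicit; your version supplies the details the paper omits.
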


\begin{proof}
Repeat the universal construction for an \emph{sp}-completion of the integral scheme $X_{et}$ in \S \emph{8}. Then we have a morphism $$q_{X}=\lambda_{X_{et}}:X_{sp}\to
X_{et}$$ such that $(X_{sp},\lambda_{X_{et}})$ is an \emph{sp}-completion of $X_{et}$.
\end{proof}

Let $X$ and $Y$ be two integral $K$-varieties. Consider the $sp$-completions $(X_{sp},\lambda_{X})$ and $(Y_{sp},\lambda_{Y})$ and the universal covers $\left( X_{et},p_{X}\right)$ and $\left( Y_{et},p_{Y}\right)$ for  $\pi_{1}^{et}(X)$ and $\pi_{1}^{et}(Y)$,
respectively.

From \emph{Remark 10.9} we have the ramified groups $$\pi _{1}^{br}\left( X\right)= \frac{ Gal(k(X)^{sep}/k(X))}{Gal(k(X)^{au}/k(X))};$$
$$\pi _{1}^{br}\left( Y\right)= \frac{ Gal(k(Y)^{sep}/k(Y))}{Gal(k(Y)^{au}/k(Y))}.$$

It follows that we have the following result.

\begin{lemma}
For any integral $K$-varieties $X$ and $Y$, there are group isomorphisms
$$
\pi _{1}^{br}\left( X\right)\cong Gal(k(X)^{sep}/k(X)^{au}) \cong Aut(X_{sp}/X_{et});
$$
$$
\pi _{1}^{br}\left(Y\right)\cong Gal(k(Y)^{sep}/k(Y)^{au})\cong Aut(Y_{sp}/Y_{et}).
$$
\end{lemma}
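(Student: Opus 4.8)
The plan is to prove the two isomorphisms in the displayed chain separately, using the Galois group $Gal(k(X)^{sep}/k(X)^{au})$ as the common middle term. Since the hypotheses imposed on $X$ and $Y$ are identical, it suffices to treat $X$, and the argument for $Y$ is then verbatim. The rightmost isomorphism $Gal(k(X)^{sep}/k(X)^{au})\cong Aut(X_{sp}/X_{et})$ is geometric and follows immediately from the main property of \emph{qc} schemes, whereas the leftmost isomorphism $\pi_1^{br}(X)\cong Gal(k(X)^{sep}/k(X)^{au})$ is purely group-theoretic and rests on the definition of the ramified group together with the Galois correspondence.

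For the geometric isomorphism I would invoke \emph{Lemma 11.5}, which supplies the canonical surjective morphism $q_X:X_{sp}\to X_{et}$ and asserts that $X_{sp}$ is \emph{qc} over $X_{et}$ by $q_X$. By \emph{Theorem 8.7} the function field $k(X_{sp})$ is the separable closure $k(X)^{sep}$, and by \emph{Lemma 10.1} we have $k(X_{et})=k(X)^{au}$; since a separable closure is automatically Galois over every intermediate field, $k(X)^{sep}$ is canonically Galois over $k(X)^{au}$. The hypotheses of \emph{Lemma 7.2} are thus met for the pair $(X_{sp},X_{et})$ and the morphism $q_X$, and that lemma yields at once
$$Aut(X_{sp}/X_{et})\cong Gal(k(X_{sp})/k(X_{et}))=Gal(k(X)^{sep}/k(X)^{au}).$$

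For the group-theoretic isomorphism I would begin from the definition of the ramified group recalled in \emph{Remark 10.9}. There \emph{Theorem 10.7(i)} identifies $\pi_1^{qc}(X;k(X)^{sep})$ with $Gal(k(X)^{sep}/k(X))$, while \emph{Theorem 10.2} identifies the \'{e}tale part with $Gal(k(X)^{au}/k(X))$. Applying the fundamental exact sequence of Galois theory to the tower $k(X)\subseteq k(X)^{au}\subseteq k(X)^{sep}$, in which $k(X)^{au}/k(X)$ is Galois by \emph{Lemma 9.8(i)} and $k(X)^{sep}/k(X)^{au}$ is Galois automatically, the normal subgroup cut out by the intermediate field $k(X)^{au}$ is precisely $Gal(k(X)^{sep}/k(X)^{au})$, with quotient $Gal(k(X)^{au}/k(X))$. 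Removing the \'{e}tale contribution $Gal(k(X)^{au}/k(X))$ as prescribed in \emph{Remark 10.9} therefore leaves $\pi_1^{br}(X)\cong Gal(k(X)^{sep}/k(X)^{au})$, which is the desired middle term.

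The step I expect to be the main obstacle is the bookkeeping in this last identification: one must check that the description of $\pi_1^{et}(X)$ furnished by \emph{Theorem 10.7(ii)} is compatible with the Galois correspondence for the tower, so that factoring $\pi_1^{qc}(X)$ by the \'{e}tale part genuinely produces the normal subgroup $Gal(k(X)^{sep}/k(X)^{au})$ and not its quotient $Gal(k(X)^{au}/k(X))$. Once this compatibility is settled both isomorphisms are formal, and the identical reasoning applied to $Y$ finishes the proof.
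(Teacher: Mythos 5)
Your argument is correct and is essentially the paper's own: the paper disposes of this lemma in one line (``From \emph{Remark 10.9} \dots it follows'') resting on the preceding construction of $q_{X}:X_{sp}\to X_{et}$, and your two halves --- $Aut(X_{sp}/X_{et})\cong Gal(k(X)^{sep}/k(X)^{au})$ via \emph{Lemma 7.2}, \emph{Theorem 8.7} and \emph{Lemma 10.1}, and $\pi_{1}^{br}(X)\cong Gal(k(X)^{sep}/k(X)^{au})$ via \emph{Remark 10.9}, \emph{Theorems 10.2, 10.7} and the Galois tower $k(X)\subseteq k(X)^{au}\subseteq k(X)^{sep}$ --- simply spell that line out, including the normal-subgroup-versus-quotient bookkeeping that the paper itself glosses over. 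One citation slip: the lemma furnishing $q_{X}$ is \emph{Lemma 11.4}, not \emph{Lemma 11.5}, since the latter is the statement being proved and citing it would be circular.
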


Here is the monodromy action of ramified groups on the \emph{sp}-completions. It will play an important role in the anabelian geometry.

\begin{lemma}
Suppose that there is a group
homomorphism $$\sigma: Aut(X_{sp}/X_{et}) \to Aut(Y_{sp}/Y_{et}).$$ Then there is a bijection
\begin{equation*}
\tau :Hom\left( X_{et},Y_{et}\right) \to Hom\left( X_{sp},Y_{sp}\right) , f\mapsto
f_{sp}
\end{equation*}
between sets given in a canonical manner:

\begin{itemize}
\item Let $f\in Hom\left( X_{et},Y_{et}\right)$. Then the map
\begin{equation*}
g\left( x_{0}\right) \longmapsto \sigma \left( g\right) \left( f\left(
x_{0}\right) \right)
\end{equation*}
defines a morphism
\begin{equation*}
f_{sp}:X_{sp}\rightarrow Y_{sp}
\end{equation*}
for any $x_{0}\in X_{et}$ and any $g\in Aut(X_{sp}/X_{et})$.

\item Let $f_{sp}\in Hom\left( X_{sp},Y_{sp}\right)$. Then the map
\begin{equation*}
\lambda_{X_{et}}\left( x\right) \longmapsto \lambda_{Y_{et}}\left( f_{sp}\left(
x\right) \right)
\end{equation*}
defines a morphism
\begin{equation*}
f:X_{et}\rightarrow Y_{et}
\end{equation*}
for any $x\in X_{sp}.$
\end{itemize}

In particular, we have
\begin{equation*}
f\circ \lambda_{X_{et}}=\lambda_{Y_{et}}\circ f_{sp}.
\end{equation*}
\end{lemma}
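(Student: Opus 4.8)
The plan is to reduce Lemma 11.7 to Lemma 11.4 (the monodromy action of absolute Galois groups) by recognizing $X_{sp}$ and $Y_{sp}$ as the $sp$-completions of $X_{et}$ and $Y_{et}$ rather than merely of $X$ and $Y$. The decisive input is \emph{Lemma 11.5}, which states that the canonical morphism $q_{X}:X_{sp}\to X_{et}$ coincides with $\lambda_{X_{et}}$ and that $(X_{sp},q_{X})$ is an $sp$-completion of $X_{et}$; the same holds for $(Y_{sp},q_{Y})$ over $Y_{et}$. Under this identification the group $Aut(X_{sp}/X_{et})$ appearing as the domain of $\sigma$ is precisely the automorphism group $Aut((X_{et})_{sp}/X_{et})$ of the $sp$-completion of $X_{et}$ over $X_{et}$, and likewise $Aut(Y_{sp}/Y_{et})=Aut((Y_{et})_{sp}/Y_{et})$.

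First I would invoke \emph{Lemma 11.5} twice to view $X_{sp}$ and $Y_{sp}$ as genuine $sp$-completions of the integral $K$-varieties $X_{et}$ and $Y_{et}$, with structure morphisms $\lambda_{X_{et}}$ and $\lambda_{Y_{et}}$. Then the hypothesized homomorphism $\sigma:Aut(X_{sp}/X_{et})\to Aut(Y_{sp}/Y_{et})$ is exactly a homomorphism between the automorphism groups of the $sp$-completions of $X_{et}$ and $Y_{et}$ over themselves, which is precisely the hypothesis demanded by \emph{Lemma 11.4} when that lemma is applied to the pair $(X_{et},Y_{et})$ in place of $(X,Y)$.

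Next I would apply \emph{Lemma 11.4} directly to $X_{et}$ and $Y_{et}$. This yields a bijection $\tau:Hom(X_{et},Y_{et})\to Hom((X_{et})_{sp},(Y_{et})_{sp})$, and since $(X_{et})_{sp}=X_{sp}$ and $(Y_{et})_{sp}=Y_{sp}$ by \emph{Lemma 11.5}, this is exactly the asserted bijection $Hom(X_{et},Y_{et})\to Hom(X_{sp},Y_{sp})$. Reading off the canonical construction of \emph{Lemma 11.4} under this identification reproduces the two formulas in the statement: for $f\in Hom(X_{et},Y_{et})$ the assignment $g(x_{0})\mapsto \sigma(g)(f(x_{0}))$ defines $f_{sp}$, while for $f_{sp}$ the assignment $\lambda_{X_{et}}(x)\mapsto \lambda_{Y_{et}}(f_{sp}(x))$ recovers $f$; the compatibility $f\circ\lambda_{X_{et}}=\lambda_{Y_{et}}\circ f_{sp}$ is the relation $f\circ\lambda_{X}=\lambda_{Y}\circ f_{sp}$ of \emph{Lemma 11.4} transported through the identification.

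The hard part will not be the abstract reduction but the verification that the hypotheses of \emph{Lemma 11.4} are truly met, namely that $X_{sp}$ is an $sp$-completion of $X_{et}$ and that $\lambda_{X_{et}}$ is its structure morphism. This is exactly the substance of \emph{Lemma 11.5}, whose argument repeats the universal construction of the $sp$-completion in \S\emph{8} applied to $X_{et}$ (using $k(X_{et})=k(X)^{au}$ and $k(X_{sp})=k(X)^{sep}$, so that $Aut(X_{sp}/X_{et})\cong Gal(k(X)^{sep}/k(X)^{au})$ is the ramified group). Once that identification is in hand, the well-definedness of the two assignments, the fact that they are morphisms of schemes, and the mutual inverseness giving bijectivity of $\tau$ are all inherited verbatim from \emph{Lemma 11.4} and require no new computation.
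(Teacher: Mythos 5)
Your proof is correct and is precisely the paper's own argument: the paper disposes of this lemma with ``It is immediate from Lemmas 11.3-4,'' i.e., by using the preparatory lemma (Lemma 11.4 in the paper's numbering, which you cite as Lemma 11.5) to recognize $(X_{sp},\lambda_{X_{et}})$ and $(Y_{sp},\lambda_{Y_{et}})$ as $sp$-completions of $X_{et}$ and $Y_{et}$, and then applying the absolute-Galois monodromy lemma (the paper's Lemma 11.3, your ``Lemma 11.4'') to the pair $(X_{et},Y_{et})$. Apart from your lemma numbers being shifted by one, your reduction, the identification $Aut(X_{sp}/X_{et})\cong Gal(k(X)^{sep}/k(X)^{au})$, and the transport of the formulas are exactly what the paper intends.
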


\begin{proof}
It is immediate from \emph{Lemmas 11.3-4}.
\end{proof}

\section{Proof of the Main Theorem}

\subsection{Preliminary lemmas}

Let $X$ and $Y$ be two integral $K$-varieties. Fixed any $sp$-completions $(X_{sp},\lambda_{X})$ and $(Y_{sp},\lambda_{Y})$ of $X$ and $Y$, and any universal covers $\left( X_{et},p_{X}\right)$ and $\left( Y_{et},p_{Y}\right)$ of $X$ and $Y$ for the groups $\pi_{1}^{et}(X)$ and $\pi_{1}^{et}(Y)$,
respectively.

There are several results on the \emph{sp}-completions and the universal covers of $X$ and $Y$, respectively (c.f. \cite{An8}).

\begin{remark}
From a viewpoint of \emph{sp}-completion, it is seen that arithmetic varieties and integral $K$-varieties are very different. In fact, for arithmetic varieties $Z_{1},Z_{2}$, by \emph{Theorem 9.15} we have $$k(Z_{1})=k(Z_{2})\implies Sp[Z_{1}]=Sp[Z_{2}]. $$ However, for integral $K$-varieties $Z_{1},Z_{2}$, from \emph{Lemma 8.9} it is seen that $$k(Z_{1})=k(Z_{2})\implies Sp[Z_{1}]=Sp[Z_{2}]$$ does not hold in general.
\end{remark}

\begin{lemma}
Suppose $k(X)=k(Y)$ and $Sp[X]=Sp[Y]$. Then
there exists a bijection $\tau$ from $Hom(X,Y)$ onto $Hom(X_{sp},Y_{sp})$ given
in a canonical manner.
In particular, $Hom(X,Y)$ must be a non-void set.
\end{lemma}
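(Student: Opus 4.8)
The plan is to deduce the statement from the monodromy correspondence for absolute Galois groups, \emph{Lemma 11.3}, whose only hypothesis is a group homomorphism $\sigma:Aut(X_{sp}/X)\to Aut(Y_{sp}/Y)$. Hence the entire proof reduces to producing such a $\sigma$ \emph{canonically} from the assumption $k(X)=k(Y)$, after which the bijection $\tau$ is handed to us and the non-voidness follows at once from $Sp[X]=Sp[Y]$.

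First I would manufacture the canonical $\sigma$. By \emph{Theorem 8.7} the scheme $X_{sp}$ is \emph{qc} over $X$ by $\lambda_{X}$ and $k(X_{sp})$ is a separable closure of $k(X)$; since a separable closure is Galois, \emph{Lemma 7.2} supplies a canonical isomorphism $Aut(X_{sp}/X)\cong Gal(k(X)^{sep}/k(X))$, and symmetrically $Aut(Y_{sp}/Y)\cong Gal(k(Y)^{sep}/k(Y))$. The hypothesis $k(X)=k(Y)$ forces $k(X)^{sep}=k(Y)^{sep}$, so the two Galois groups are literally the same; composing one isomorphism with the inverse of the other yields a canonical isomorphism
\begin{equation*}
\sigma:Aut(X_{sp}/X)\xrightarrow{\ \cong\ }Aut(Y_{sp}/Y)
\end{equation*}
induced by the identity automorphism of the common separable closure $k(X)^{sep}=k(Y)^{sep}$.

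With this $\sigma$ I would invoke \emph{Lemma 11.3} directly: it returns a bijection $\tau:Hom(X,Y)\to Hom(X_{sp},Y_{sp})$, $f\mapsto f_{sp}$, characterised by $f\circ\lambda_{X}=\lambda_{Y}\circ f_{sp}$. Because $\sigma$ was pinned down as the identity on the common separable closure, $\tau$ is canonical, and by the essential uniqueness of \emph{sp}-completions (\emph{Lemma 8.8}) it is independent of the auxiliary choice of completions up to the identifications those essential equalities provide.

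For the final assertion, since $Sp[X]=Sp[Y]$ the chosen completions $X_{sp}$ and $Y_{sp}$ belong to the same set and are therefore essentially equal, hence isomorphic by \emph{Lemma 8.8}; in particular $Hom(X_{sp},Y_{sp})\neq\emptyset$. Pulling any such isomorphism back through $\tau^{-1}$ produces an element of $Hom(X,Y)$, so $Hom(X,Y)$ is non-void. I expect the one point demanding care to be the \emph{canonicity} of $\sigma$ --- verifying that the identification $k(X)^{sep}=k(Y)^{sep}$ is the intended one and that the resulting $\tau$ does not secretly depend on the chosen completions or covers --- but this is controlled by the naturality of the isomorphisms in \emph{Lemma 7.2} together with the uniqueness statement of \emph{Lemma 8.8}.
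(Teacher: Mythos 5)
Your proposal is correct and follows essentially the same route as the paper: the paper likewise identifies $Aut(X_{sp}/X)\cong Gal(k(X)^{sep}/k(X))\cong Aut(Y_{sp}/Y)$ (via \emph{Theorem 8.7}, with \emph{Lemma 7.2} doing the work you cite explicitly), feeds this homomorphism into \emph{Lemma 11.3} to obtain the bijection $\tau$, and uses \emph{Lemma 8.8} together with $Sp[X]=Sp[Y]$ to produce an isomorphism $X_{sp}\cong Y_{sp}$ witnessing non-voidness. Your added attention to the canonicity of $\sigma$ and independence of the chosen completions is a slight refinement of, not a departure from, the paper's argument.
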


\begin{proof}
Fixed any \emph{sp}-completions $X_{sp}$ and $Y_{sp}$ of $X$ and $Y$, respectively. By \emph{Lemma 8.8} there is an isomorphism $$f_{sp}:X_{sp}\to Y_{sp}$$ according to the assumption that $Sp[X]=Sp[Y]$ holds.

By \emph{Theorem 8.7} we have
\begin{equation*}
Aut(X_{sp}/X)\cong Gal(k(X)^{sep}/k(X)) \cong Aut(Y_{sp}/Y).
\end{equation*}
From \emph{Lemma 11.3}  we immediately obtain the desired properties.
\end{proof}

\begin{remark}
From a viewpoint of graph functor $\Gamma$, it is seen that arithmetic varieties and integral $K$-varieties are also very different. In fact, for arithmetic varieties $Z_{1},Z_{2}$,  we have $$k(Z_{1})\subseteq k(Z_{2})\implies \Gamma(Z_{1})\subseteq \Gamma(Z_{2}). $$ However, for integral $K$-varieties $Z_{1},Z_{2}$, in general, it is not true that $$k(Z_{1})\subseteq k(Z_{2})\implies \Gamma(Z_{1})\subseteq \Gamma(Z_{2}) $$ holds.
\end{remark}

\begin{lemma}
Let $k(X)$ be separably generated over $k(Y)$. Then there are the following statements:
\begin{itemize}
\item There is a homomorphism
$$\sigma_{sp}: Gal(k(X)^{sep}/k(X)) \to Gal(k(Y)^{sep}/k(Y)).$$

\item There is a bijection $\tau$ from $Hom(X,Y)$ onto $Hom(X_{sp},Y_{sp})$ given in a canonical manner. In particular,  $Hom(X,Y)$ is empty  if and only if so is $Hom(X_{sp},Y_{sp})$.

\item Let $\Gamma(X_{sp})\supseteq \Gamma(Y_{sp})$. Then $Hom(X,Y)$ must be a non-void set.
\end{itemize}
\end{lemma}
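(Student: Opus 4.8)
The plan is to establish the three assertions in order: the first from elementary Galois theory, the second by transporting the first through the monodromy correspondence of \S 11.2, and the third by an explicit construction of a morphism between the \emph{sp}-completions that uses the graph hypothesis together with the emptiness equivalence just obtained. For the first assertion, I would read ``$k(X)$ separably generated over $k(Y)$'' as providing a canonical inclusion $k(Y)\subseteq k(X)$ with $k(X)$ separable over $k(Y)$. Embedding separable closures compatibly inside a fixed algebraic closure yields $k(Y)^{sep}\subseteq k(X)^{sep}$, and since $k(Y)^{sep}/k(Y)$ is normal, every $\sigma\in Gal(k(X)^{sep}/k(X))$ fixes $k(Y)$ and therefore carries $k(Y)^{sep}$ onto itself. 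Restriction $\sigma\mapsto \sigma|_{k(Y)^{sep}}$ then defines the required homomorphism $\sigma_{sp}$; this step is routine field theory and I expect no difficulty here.

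For the second assertion, I would transport $\sigma_{sp}$ across the canonical isomorphisms $Gal(k(X)^{sep}/k(X))\cong Aut(X_{sp}/X)$ and $Gal(k(Y)^{sep}/k(Y))\cong Aut(Y_{sp}/Y)$ recorded in \S 11.2 (which themselves come from \emph{Lemma 7.2} and \emph{Theorem 8.7}), obtaining a group homomorphism $\sigma:Aut(X_{sp}/X)\to Aut(Y_{sp}/Y)$. Feeding $\sigma$ into \emph{Lemma 11.3} produces directly the canonical bijection $\tau:Hom(X,Y)\to Hom(X_{sp},Y_{sp})$. The ``in particular'' clause, that $Hom(X,Y)$ is empty if and only if $Hom(X_{sp},Y_{sp})$ is, is then an immediate consequence of $\tau$ being a bijection.

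For the third assertion I would first invoke the bijection $\tau$ of the second assertion, so that it suffices to produce a single morphism $X_{sp}\to Y_{sp}$; applying $\tau^{-1}$ to it then exhibits a nonempty $Hom(X,Y)$. To build such a morphism I would work inside a common algebraic closure $\Omega=k(X)^{al}$, which contains $k(Y)^{sep}=k(Y_{sp})$ because $k(Y)\subseteq k(X)$, and use that both $X_{sp}$ and $Y_{sp}$ are essentially affine by \emph{Theorem 8.7}. The field inclusion $k(Y)^{sep}\hookrightarrow k(X)^{sep}$ gives, on suitably matched affine charts $Spec(A_{\alpha})\subseteq X_{sp}$ and $Spec(B_{\beta})\subseteq Y_{sp}$ with $B_{\beta}\subseteq A_{\alpha}\subseteq\Omega$, the affine morphisms $\mathfrak{p}\mapsto \mathfrak{p}\cap B_{\beta}$, which I would then glue along the natural-affine-structure of the two \emph{qc} schemes.

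The hard part will be showing that these local contractions assemble into a globally defined morphism, that is, that every point of $X_{sp}$ acquires a well-defined image in $Y_{sp}$ independent of the chosen chart and that specializations are respected. This is exactly where the hypothesis $\Gamma(X_{sp})\supseteq\Gamma(Y_{sp})$ enters: the containment of graphs, read through the graph functor of \emph{Lemma 8.4} and combined with the \emph{sp}-completeness of $Y_{sp}$ (\emph{Definition 8.5}) and the affine-locality of specializations in \emph{Lemma 8.2}, guarantees that each contracted prime lands on a genuine point of $Y_{sp}$ carrying the correct specialization structure, so that the glued assignment is a morphism of schemes rather than merely a dominant rational map. I expect the verification of this gluing, in the spirit of the point-matching argument used in the proof of \emph{Theorem 9.15}, to be the main technical obstacle.
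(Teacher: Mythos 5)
Your proposal matches the paper's own proof in both structure and substance: the paper likewise treats the first two assertions as immediate (restriction of Galois automorphisms, then \emph{Lemma 11.3} applied through the isomorphisms $Gal(k(X)^{sep}/k(X))\cong Aut(X_{sp}/X)$ and $Gal(k(Y)^{sep}/k(Y))\cong Aut(Y_{sp}/Y)$), and proves the third by building $f_{sp}:X_{sp}\to Y_{sp}$ chart-by-chart from ring inclusions $B\subseteq A$ inside a common closure, with the graph hypothesis and \emph{Lemma 8.2} ensuring every chart of $X_{sp}$ contains a matching chart of $Y_{sp}$, before descending along the projections $\lambda_{X},\lambda_{Y}$ (which is exactly your $\tau^{-1}$). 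The only cosmetic difference is that the paper specifies the explicit chart $A=B[\Delta_{B}]$ generated by a Galois orbit, where you leave the matching of charts implicit.
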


\begin{proof}
It suffices to prove the third statement. Suppose that $\Gamma(Y_{sp})$ is a subgraph of $\Gamma(X_{sp})$.
Take the \emph{sp}-completions $(X_{sp},\lambda_{X})$ and $(Y_{sp},\lambda_{Y})$
of $X$ and $Y$, respectively.

It is seen that the ring $B$ of an affine open set $V$ in $Y_{sp}$ must
be embedded into the ring $A $ of some certain affine open set $U$ in $X_{sp}$ as a subring.
In deed, choose $A$ to be the ring over $B$ generated by the set
$$\Delta_{B}=\{\sigma(w):w\in \Delta,\sigma\in Gal(k(X)^{sep}/k(Y)^{sep})\}$$
where $\Delta \subseteq k(X)^{sep}\setminus k(Y)^{sep}$ is a set of generators of $k(X)^{sep}$ over $k(Y)^{sep}$. It is easily seen that $U=Spec(A)$ is an affine open set in an \emph{sp}-completion of $X$ that is essentially equal to the given $X_{sp}$ from the universal construction for $X_{sp}$ in \S \emph{8.2}.

Conversely, each $A$ must contain some $B$. In fact, let $\xi$ and $\eta$ be the generic points of $X$ and $Y$, respectively.
Take any point $y_{0}$ in $V$. We have the specializations $$\eta \to y_{0} \text{ in } Y_{sp};$$
$$\xi \to y_{0} \text{ in } X_{sp}.$$

From \emph{Lemma 8.2} we have an affine open set $U=Spec(A)$ in $X_{sp}$ containing $\xi$ and $y_{0}$; then, $U$ also contains $\eta$. Hence,  $A$ contains some $B$ such that $y_{0}\in V=Spec(B)$.

It follows that there is a
homomorphism
\begin{equation*}
f_{U}:U=Spec(A)\to V=Spec(B)
\end{equation*}
defined by the inclusion. This gives us a scheme homomorphism
\begin{equation*}
f_{sp}:X_{sp}\to Y_{sp}.
\end{equation*}

By the projections $\lambda_{X}:X_{sp}\to X$ and $\lambda_{Y}:Y_{sp}\to Y$
we have a unique homomorphism $f:X \to Y$ satisfying the condition
\begin{equation*}
\lambda_{sp}\circ f_{sp}=f \circ \lambda_{sp}.
\end{equation*}
This completes the proof.
\end{proof}

\begin{lemma}
Let $k(X)$ be separably generated over $k(Y)$. Then there are the following statements:
\begin{itemize}
\item There is a homomorphism
$$\sigma_{br}: Gal(k(X)^{sep}/k(X)^{au}) \to Gal(k(Y)^{sep}/k(Y)^{au}).$$

\item There is a bijection $\tau$ from $Hom(X_{et},Y_{et})$ onto $Hom(X_{sp},Y_{sp})$ given in a canonical manner. In particular,  $Hom(X_{et},Y_{et})$ is empty  if and only if so is $Hom(X_{sp},Y_{sp})$.

\item Let $\Gamma(X_{sp})\supseteq \Gamma(Y_{sp})$. Then $Hom(X_{et},Y_{et})$ must be a non-void set.
\end{itemize}
\end{lemma}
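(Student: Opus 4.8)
The plan is to follow the same three-part template used in the proof of \emph{Lemma 12.4}, replacing the absolute Galois groups and the pair $(X,Y)$ there by the ramified groups and the pair $(X_{et},Y_{et})$, and invoking the monodromy action of ramified groups (\emph{Lemma 11.6}) in place of that of absolute Galois groups (\emph{Lemma 11.3}). For the homomorphism $\sigma_{br}$ I would start from the hypothesis that $k(X)$ is separably generated over $k(Y)$, which lets us fix compatible separable closures so that $k(Y)^{sep}\subseteq k(X)^{sep}$. Restriction of field automorphisms then gives the map $\sigma_{sp}:Gal(k(X)^{sep}/k(X))\to Gal(k(Y)^{sep}/k(Y))$ already used in \emph{Lemma 12.4}. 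To descend it to the ramified quotients I would check that $k(Y)^{au}\subseteq k(X)^{au}$ inside $k(X)^{sep}$; granting this, any $\tau\in Gal(k(X)^{sep}/k(X)^{au})$ fixes $k(Y)^{au}$, so its restriction lies in $Gal(k(Y)^{sep}/k(Y)^{au})$, producing
$$\sigma_{br}:Gal(k(X)^{sep}/k(X)^{au})\to Gal(k(Y)^{sep}/k(Y)^{au}).$$
By the identifications $\pi_1^{br}(X)\cong Aut(X_{sp}/X_{et})$ and $\pi_1^{br}(Y)\cong Aut(Y_{sp}/Y_{et})$ of \emph{Lemma 11.5}, this is the same datum as a homomorphism $\sigma:Aut(X_{sp}/X_{et})\to Aut(Y_{sp}/Y_{et})$.

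The bijection in the middle bullet is then immediate from \emph{Lemma 11.6} applied to this $\sigma$: it produces a canonical bijection $\tau:Hom(X_{et},Y_{et})\to Hom(X_{sp},Y_{sp})$, and the stated equivalence that $Hom(X_{et},Y_{et})$ is empty if and only if $Hom(X_{sp},Y_{sp})$ is empty is just the fact that $\tau$ is a bijection.

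The third bullet is the only step with real content, and I would treat the graph hypothesis $\Gamma(X_{sp})\supseteq\Gamma(Y_{sp})$ exactly as in the proof of \emph{Lemma 12.4}. Using the universal construction of \S 8.2, for each affine open $V=Spec(B)$ in $Y_{sp}$ one embeds $B$ into the ring $A$ of a suitable affine open $U=Spec(A)$ of $X_{sp}$, where $A$ is generated over $B$ by $\Delta_B=\{\sigma(w):w\in\Delta,\ \sigma\in Gal(k(X)^{sep}/k(Y)^{sep})\}$ with $\Delta$ a generating set of $k(X)^{sep}$ over $k(Y)^{sep}$; the reverse containment is recovered by tracking the specializations $\xi\to y_0$ and $\eta\to y_0$ and applying \emph{Lemma 8.2}. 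Gluing the inclusions $f_U$ yields $f_{sp}:X_{sp}\to Y_{sp}$, so $Hom(X_{sp},Y_{sp})$ is non-void. Then the projections $q_X=\lambda_{X_{et}}$ and $q_Y=\lambda_{Y_{et}}$ of \emph{Lemma 11.4}, for which $(X_{sp},q_X)$ and $(Y_{sp},q_Y)$ are sp-completions of $X_{et}$ and $Y_{et}$, let us descend $f_{sp}$ to a morphism $f:X_{et}\to Y_{et}$ with $f\circ\lambda_{X_{et}}=\lambda_{Y_{et}}\circ f_{sp}$; equivalently one transports $f_{sp}$ back through the bijection $\tau$ of the middle bullet. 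Either way $Hom(X_{et},Y_{et})\neq\emptyset$.

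I expect the main obstacle to be the verification, in the first step, that $k(Y)^{au}\subseteq k(X)^{au}$ so that $\sigma_{sp}$ really descends to the ramified groups; this is the one point with no direct analogue in \emph{Lemma 12.4}. I would establish it from the transitivity of formally unramified extensions (\emph{Remark 9.4}) together with the geometric-model characterization of \emph{Definition 9.2}, showing that a finite $Y$-formally unramified Galois subextension of $k(Y)$ remains $X$-formally unramified after the separable base change to $k(X)$. The remainder is routine given \emph{Lemmas 11.4--6} and the construction already carried out for \emph{Lemma 12.4}.
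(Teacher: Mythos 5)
Your proposal takes essentially the same route as the paper: the paper's entire proof of this lemma is the single line ``It is immediate from \emph{Lemmas 11.4-6,12.4},'' and your argument is precisely an unpacking of those citations --- \emph{Lemma 11.5} to identify the ramified groups with $Aut(X_{sp}/X_{et})$ and $Aut(Y_{sp}/Y_{et})$, \emph{Lemma 11.6} for the canonical bijection $Hom(X_{et},Y_{et})\to Hom(X_{sp},Y_{sp})$, and \emph{Lemma 11.4} together with the graph-hypothesis construction recycled from \emph{Lemma 12.4} for the third bullet. The one step you supply that the paper leaves wholly implicit --- verifying $k(Y)^{au}\subseteq k(X)^{au}$ so that $\sigma_{sp}$ actually descends to $\sigma_{br}$ --- is a real gap hidden behind the paper's ``immediate,'' and your plan to close it via \emph{Remark 9.4}, \emph{Definition 9.2}, and base change of \'etale covers is consistent with how the paper argues the analogous point in \emph{Lemma 9.8}$(iii)$.
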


\begin{proof}
It is immediate from \emph{Lemmas 11.4-6,12.4}.
\end{proof}

\begin{lemma}
Let $k(X)$ be separably generated over $k(Y)$. Then there are the following statements:
\begin{itemize}
\item There is a homomorphism
$$\sigma_{et}: Gal(k(X)^{au}/k(X)) \to Gal(k(Y)^{au}/k(Y)).$$

\item There is a bijection $\tau$ from $Hom(X,Y)$ onto $Hom(X_{au},Y_{au})$ given in a canonical manner. In particular,  $Hom(X,Y)$ is empty  if and only if so is $Hom(X_{au},Y_{au})$.

\item Let $\Gamma(X_{sp})\supseteq \Gamma(Y_{sp})$. Then $Hom(X_{et},Y_{et})$ must be a non-void set.
\end{itemize}
\end{lemma}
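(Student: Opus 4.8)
The plan is to treat this lemma as the \'{e}tale-level counterpart of \emph{Lemmas 12.4--12.5}, deducing it from the monodromy action of \'{e}tale fundamental groups in \emph{Lemma 11.2} together with the identifications $\pi_{1}^{et}(X)\cong Gal(k(X)^{au}/k(X))$ of \emph{Theorem 10.2} and $Aut(X_{et}/X)\cong\pi_{1}^{et}(X)$ of \emph{Lemma 11.1}. First I would record that $X_{au}$ is nothing but the universal cover $X_{et}$, since $k(X_{et})=k(X)^{au}$ by \emph{Lemma 10.1}; with this reading the bijection asserted in the second bullet is literally the bijection $Hom(X,Y)\to Hom(X_{et},Y_{et})$ furnished by \emph{Lemma 11.2}, once the homomorphism $\sigma_{et}$ of the first bullet is in hand.

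For the first bullet I would build $\sigma_{et}$ by descending the absolute-Galois homomorphism of \emph{Lemma 12.4} along the formally unramified layer. By \emph{Lemma 9.8}$(i)$ both $k(X)^{au}/k(X)$ and $k(Y)^{au}/k(Y)$ are Galois, so by \emph{Theorem 10.7} and \emph{Remark 10.9} there is a short exact sequence
\begin{equation*}
1\to Gal(k(X)^{sep}/k(X)^{au})\to Gal(k(X)^{sep}/k(X))\to Gal(k(X)^{au}/k(X))\to 1
\end{equation*}
and likewise for $Y$. The middle arrow is transported by the $\sigma_{sp}$ of \emph{Lemma 12.4}, while the $\sigma_{br}$ of \emph{Lemma 12.5} transports the kernels; since $k(Y)^{au}/k(Y)$ is normal it is stable under every element of $Gal(k(Y)^{sep}/k(Y))$, so $\sigma_{sp}$ carries the $X$-sequence into the $Y$-sequence as a morphism of short exact sequences and hence descends to the quotients, yielding exactly $\sigma_{et}:Gal(k(X)^{au}/k(X))\to Gal(k(Y)^{au}/k(Y))$. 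Under \emph{Theorem 10.2} and \emph{Lemma 11.1} this is the same datum as a homomorphism $Aut(X_{et}/X)\to Aut(Y_{et}/Y)$.

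With $\sigma_{et}$ available, the second bullet is immediate from \emph{Lemma 11.2}: feeding the induced homomorphism $Aut(X_{et}/X)\to Aut(Y_{et}/Y)$ into the monodromy construction produces the canonical bijection $\tau:Hom(X,Y)\to Hom(X_{et},Y_{et})=Hom(X_{au},Y_{au})$, and the ``empty iff empty'' clause is then purely formal. For the third bullet I would simply transport non-voidness across $\tau$: by the third bullet of \emph{Lemma 12.4} the hypothesis $\Gamma(X_{sp})\supseteq\Gamma(Y_{sp})$ forces $Hom(X,Y)\neq\emptyset$, and applying the bijection $\tau$ gives $Hom(X_{et},Y_{et})\neq\emptyset$, as claimed.

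The hard part will be the well-definedness of the descent in the second paragraph: one must verify that $\sigma_{sp}$ genuinely sends $Gal(k(X)^{sep}/k(X)^{au})$ into $Gal(k(Y)^{sep}/k(Y)^{au})$ --- equivalently that $k(Y)^{au}\subseteq k(X)^{au}$, so that the absolute-Galois transport respects the formally unramified layer and the induced map on the \'{e}tale quotients is independent of the chosen lift. This compatibility is precisely the content already packaged in the $\sigma_{br}$ of \emph{Lemma 12.5}, and it rests on the normality of $k(-)^{au}$ over $k(-)$ from \emph{Lemma 9.8}$(i)$; once it is granted, everything else is a formal diagram chase through the two short exact sequences.
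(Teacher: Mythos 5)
Your proposal is correct and takes essentially the same route as the paper, whose entire proof is the citation ``It is immediate from \emph{Lemmas 11.2, 12.4}'': identifying $X_{au}$ with the universal cover $X_{et}$ (via $k(X_{et})=k(X)^{au}$ from \emph{Lemma 10.1}), obtaining $\sigma_{et}$ from the $\sigma_{sp}$ of \emph{Lemma 12.4}, and then feeding it into the monodromy bijection of \emph{Lemma 11.2} for the second and third bullets is exactly what the paper intends. Your descent of $\sigma_{sp}$ through the two short exact sequences, with \emph{Lemma 12.5} and \emph{Lemma 9.8}$(i)$ supplying the compatibility on the kernels, merely makes explicit the step the paper leaves implicit.
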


\begin{proof}
It is immediate from \emph{Lemmas 11.2,12.4}.
\end{proof}

\subsection{Proof of the main theorem}

Now we can give the proof of the main theorem in the paper.

\begin{proof}
\textbf{(Proof of Theorem 1.3)} Noticed that from \emph{Lemma 11.5} we have the ramified groups $$\pi _{1}^{br}\left( X\right)= \frac{ Gal(k(X)^{sep}/k(X))}{Gal(k(X)^{au}/k(X))}\cong Aut(X_{sp}/X_{et});$$
$$\pi _{1}^{br}\left( Y\right)= \frac{ Gal(k(Y)^{sep}/k(Y))}{Gal(k(Y)^{au}/k(Y))}\cong Aut(Y_{sp}/Y_{et}).$$

Then we have

$$Hom(X,Y)\cong Hom (\pi _{1}^{br}\left( X\right),\pi _{1}^{br}\left( Y\right))$$ $$\cong Hom(\frac{Aut(X_{sp}/X)}{Aut(X_{et}/X)},\frac{Aut(Y_{sp}/Y)}{Aut(Y_{et}/Y)})$$
from \emph{Lemmas 11.1-4,11.6,12.2,12.4-6}.

This completes the proof.
\end{proof}

\newpage


\begin{thebibliography}{99}


\bibitem{An1} An, F-W. The combinatorial graph of a scheme. eprint
arXiv:0801.2609.

\bibitem{An2} An, F-W. Automorphism groups of quasi-galois closed arithmetic
schemes. eprint arXiv:0907.0842.

\bibitem{An2*} An, F-W. Automorphism groups of quasi-galois closed
arithmetic schemes, (without affine structures). preprint.

\bibitem{An3} An, F-W. On the existence of geometric models for function
fields in several variables. eprint arXiv:0909.1993.

\bibitem{An4*} An, F-W. on the \'{e}tale fundamental groups of arithmetic
schemes, revised. eprint arXiv:0910.4646.

\bibitem{An5} An, F-W. On the arithmetic fundamental groups. eprint
arXiv:0910.0605.

\bibitem{An6} An, F-W. On the algebraic fundamental groups. eprint
arXiv:0910.4691.

\bibitem{An7} An, F-W. Notes on the quasi-galois closed schemes. eprint
arXiv:0911.1073.

\bibitem{An8} An, F-W. On the section conjecture of Grothendieck.
eprint arXiv:0911.1523.

\bibitem{An10} An, F-W. Notes on the section conjecture of Grothendieck.
eprint arXiv:0911.3309.

\bibitem{F-K} Freitag, E; Kiehl, R. \'{E}tale Cohomology and the Weil
Conjecture. Springer, Berlin, 1988.

\bibitem{EGA} Grothendieck, A; Dieudonn\'{e}, J. \'{E}l\'{e}ments de G\'{e}%
oem\'{e}trie Alg\'{e}brique. vols I-IV, Pub. Math. de l'IHES, 1960-1967.

\bibitem{SGA1} Grothendieck, A; Raynaud, M. Rev$\hat{e}$tements $\acute{E}$%
tales et Groupe Fondamental (SGA1). Springer, New York, 1971.

\bibitem{faltings} Grothendieck, A. Letter to Faltings, in \emph{Geometric
Galois Actions}, Vol 1, edited by Schneps, L ; Lochak, P. Cambridge
University Press, New York, 1997.

\bibitem{Hrtsh} Hartshorne, R. Algebraic Geometry. Springer, New York, 1977.

\bibitem{Mln} Milne, J. \'{E}tale Cohomology. Princeton University Press,
Princeton, New Jersey, 1980.

\bibitem{pop} Pop, F. Glimpses of Grothendieck's anabelian geometry, in
\emph{Geometric Galois Actions}, Vol 1, edited by Schneps, L ; Lochak, P.
Cambridge University Press, New York, 1997.

\bibitem{sv1} Suslin, A; Voevodsky, V. Singular homology of abstract
algebraic varieties. Invent. Math. 123 (1996), 61-94.

\bibitem{sv2} Suslin, A; Voevodsky, V. Relative cycles and Chow sheaves, in
\emph{Cycles, Transfers, and Motivic Homology Theories}, Voevodsky, V;
Suslin, A; Friedlander, E M. Annals of Math Studies, Vol 143. Princeton
University Press, Princeton, NJ, 2000.

\bibitem{tut} Tutte, W. T. Graph Theory. Cambridge University Press,
Cambridge, England, 2001.
\end{thebibliography}
\end{document}